\newcommand{\al}{\alpha}\newcommand{\be}{\beta}
\newcommand{\de}{\delta}
\newcommand{\ep}{\epsilon}
\newcommand{\la}{\lambda}
\newcommand{\om}{\omega}\newcommand{\Om}{\Omega}
\def\<{\langle}
\def\>{\rangle}
\newcommand{\R}{\mathbb{R}}\newcommand{\Z}{\mathbb{Z}}
\newcommand{\N}{\mathbb{N}}
\def\CO{\mathcal {O}}
\newcommand{\pt}{\partial_t}\newcommand{\pa}{\partial}
\newcommand{\les}{{\lesssim}}
\newcommand{\beeq}{\begin{equation}}\newcommand{\eneq}{\end{equation}}
\newtheorem{thm}{Theorem}[section]
\newtheorem{prop}[thm]{Proposition}
\newtheorem{coro}[thm]{Corollary}
\newtheorem{rem}{Remark}[section]
\newtheorem{lem}[thm]{Lemma}
\def \endprf{\hfill  {\vrule height6pt width6pt depth0pt}\medskip}
\numberwithin{equation}{section}
\begin{document}

\title[Strauss conjecture on asymptotically Euclidean manifolds]{Concerning the Strauss conjecture on asymptotically Euclidean manifolds}

\thanks{The first author was supported in part by NSFC 10871175 and
10911120383.}

\author{Chengbo Wang}
\address{Department of Mathematics, Johns Hopkins University, Baltimore,
Maryland 21218} \email{wangcbo@jhu.edu}


\author{Xin Yu}
\address{Department of Mathematics, Johns Hopkins University, Baltimore,
Maryland 21218} \email{yuxin@jhu.edu}

\subjclass[2010]{35L05, 35L70, 35B40}
\keywords{Strauss conjecture, KSS estimates (Keel-Smith-Sogge estimates), weighted Strichartz estimates}

\dedicatory{} \commby{}

\begin{abstract}
 In this paper we verify the Strauss conjecture for semilinear wave
 equations on asymptotically Euclidean manifolds when $n=3,4$. We also give an almost sharp lifespan for the subcritical case $2\le p<p_c$ when $n=3$. The main ingredients include a Keel-Smith-Sogge type estimate with $0<\mu<1/2$ and weighted Strichartz estimates of order two.
\end{abstract}

\maketitle

\tableofcontents

\section{Introduction and Main Results}
This paper is devoted to the study of the semilinear wave equation
on asymptotically Euclidean non-trapping Riemannian manifolds with small initial data. In particular, we verify the Strauss conjecture in this setting when $n=3, 4$ and $p> p_c$. Moreover,
we obtain an almost sharp lifespan for the solution when $2\le p< p_c$ and $n=3$.

In the Minkowski space-time, this problem has been
thoroughly studied. The work on global existence part (i.e.
$p>p_c$) is initiated by John \cite{John} for $n=3$ and ended by
Georgiev, Lindblad and Sogge \cite{GLS} and Tataru
\cite{Ta01}. It is known that $p>p_c$ is necessary for
global existence, even with small data, see \cite{Sideris}, \cite{YorZh06}, \cite{Zhou07} and reference therein. Moreover, when $n=3$ and $p\le p_c$, the sharp lifespan is known in Zhou \cite{Zhou1} (see also \cite{LdSo96} for lower bound of the lifespan $p\le p_c$ and $n\ge 3$, and \cite{ZhouHan} for upper bound of the lifespan when $p<p_c$ and $n\ge 3$).

When dealing with semilinear wave equations, we know
that the Keel-Smith-Sogge (KSS) estimate plays an important role, which is originated by
Keel, Smith and Sogge \cite{KSS} and states that \beeq
\label{OKSS} (\log (2+T))^{-1/2}\|\<x\>^{-1/2}u'\|_{L^2([0,T]\times
\R^3)}\les\|u'(0,\cdot)\|_{L^2(\R^3)}+\int_0^T\|F(s,\cdot)\|_{L^2(\R^3)}\;ds,
\eneq where $u$ solves the equation $\Box\, u = F$ and $u'=(\pa_t u,
\pa_x u)$. This estimate has been generalized for general weight of form $\<x\>^{-a}$ with $a\geq 0$ (see \cite{JWY} and references therein).

Recently, Bony and H\"{a}fner \cite{BoHa} obtained a weaker
version of the KSS estimates for asymptotically Euclidean space
when the metric is non-trapping. With this estimate, they were able
to show the global and long time existence for quadratic
semilinear wave equations with dimension $n\ge 4$ and $n=3$. Then
Sogge and Wang \cite{SW} proved the almost global existence for
$3$-D quadratic semilinear equations by obtaining the sharp KSS
estimates for $a=1/2$. Together with the KSS estimates, they also
proved the Strauss conjecture for $n=3$ and $p>p_c$ with
spherically symmetric metric. The proof is based on weighted
Strichartz estimates, and it is the weighted Strichartz estimates of
higher order where the additional symmetric assumption is posed to
avoid the technical difficulties when commutating the Laplacian
with the vector fields.

In this work, we are able to overcome the difficulties of
commutating vector fields and verify the weighted Stricharz
estimates and energy estimates with derivatives up to second
order, for a general metric. This enables us to prove the Strauss
conjecture with $p>p_c$ for $n=3,4$. Moreover, we are able to get
the KSS estimates for $0<a<1/2$, by applying the corresponding
estimates for wave equations with variable coefficients
(see \cite{MeSo06_01}, \cite{HiWaYo}). With these estimates in hand,
we can also prove the local existence for $2\leq p< p_c$ when
$n=3$ with almost sharp lifespan.

Let us now state our results precisely. First, we introduce the necessary notations. We consider asymptotically
Euclidean manifolds $( \R^n , g)$ with $n \geq 3$ and
\begin{equation*}
g = \sum_{i,j=1}^{n} g_{ij} (x) \, d x^i \, d x^j .
\end{equation*}
We suppose $g_{ij} (x) \in C^{\infty} ( \R^{n} )$ and, for some
$\rho
>0$,
\begin{equation}\tag{H1} \label{H1}
\forall \alpha \in \N^n \qquad \partial^{\alpha}_x ( g_{ij} -
\delta_{ij} ) = \CO ( \< x \>^{- \vert \alpha \vert - \rho} ) ,
\end{equation}
with $\delta_{ij}=\delta^{ij}$ being the Kronecker delta function.
We also assume that
\begin{equation}\tag{H2} \label{H2}
g \text{ is non-trapping.}
\end{equation}
Let $g (x) = ( \det ( g ) )^{1/4}$. The Laplace--Beltrami
operator associated with $g$ is given by
\begin{equation*}
\Delta_{g} = \sum_{ij} \frac{1}{g^2} \partial_i g^{ij} g^2 \partial_j ,
\end{equation*}
where $g^{ij} (x)$ denotes the inverse matrix of $g_{ij}(x)$. It is easy to see that
$-\Delta_{g}$ is a self-adjoint non-negative operator on
$L^2(\R^n, g^2 d x)$, while $P=-g\Delta_{g}g^{-1}$ is a
self-adjoint non-negative operator on $L^2(\R^n, d x)$.


Let $p>1$,
$$s_c=\frac{n}{2}-\frac{2}{p-1},\quad
s_{d}=\frac{1}{2}-\frac{1}{p} $$
and
$p_c$ be the
positive root for
$$(n-1)p^2-(n+1)p-2=0.$$ Note that $p_c=1+\sqrt{2}$ for $n=3$ and $p_c=2$ for $n=4$. The semilinear wave equations we will
consider are
\begin{equation}\label{eq}
\begin{cases}
(\partial_t^2-\Delta_g)u(t,x)=F_p(u(t,x)), \quad (t,x)\in \R_+\times \R^n
\\
u(0,x)=u_0(x), \quad \partial_t u(0,x)=u_1(x), \quad x\in \R^n.
\end{cases}
\end{equation}
  We will assume that the nonlinear term behaves like
$|u|^p$, 
and so we assume that
\begin{equation}\label{Fp}
\sum_{0\le j\le 2} |u|^j\, |\, \partial^j_u F_p(u)\, | \, \lesssim
\, |u|^p, \text{ for $|u|$ small.}
\end{equation}
Finally we introduce the notation for vector fields $Z=\{\pa_x,
\Om_{ij}: 1\le i\le j\le 3\}$, $\Gamma=\{\pt\}\cup Z$, where
$\Om_{ij}=x_i\pa_j-x_j\pa_i$ is the rotational vector field, and
define $\tilde\pa_i=\pa_i g^{-1}, \tilde\Om_{ij}=\Om_{ij} g^{-1}$.

Now we can state our main results.
\begin{thm}
\label{Strauss} Suppose \eqref{H1} and \eqref{H2} hold with $\rho >
2$, $n=3,4$, and $p_c<p<1+4/(n-1)$. Then for any $\ep>0$
such that $($recall that $s_c>s_d$ since $p>p_c)$
\begin{equation}\label{s_1}
  s=s_c-\ep\in \left(s_d, \frac 12\right)
\end{equation}
 there is a $\de>0$ depending on $p$ so that
\eqref{eq} has a global solution satisfying
$(Z^\alpha u(t,\cdot), \partial_t Z^\alpha u(t,\cdot))\in
\dot{H}^s \times \dot{H}^{s-1}$, $|\alpha|\le 2$, $t\in \R_+$,
whenever the initial data satisfies
\beeq\label{70-eqn-data2}
\sum_{|\alpha|\le 2}\left(\, \|Z^\alpha u_0\|_{\dot H^s} +\|Z^\alpha
u_1\|_{ \dot H^{s-1}}\, \right)<\de .
\eneq
Moreover, in the case $n=3$, we can relax the assumption for $\rho$
to $\rho>1$. More precisely, if $F_p(u)$ satisfies
\beeq\label{assum}\sum_{0\le j\le 1} |u|^j\, |\, \partial^j_u F_p(u)\, | \, \lesssim
\, |u|^p\eneq
instead of \eqref{Fp}, for any $p_c<p<3$ and any $\ep>0$
such that \eqref{s_1} is true, the problem \eqref{eq} has a global
solution satisfying $(Z^\alpha u(t,\cdot), \partial_t Z^\alpha
u(t,\cdot))\in \dot{H}^s \times \dot{H}^{s-1}$, $|\alpha|\le 1$,
$t\in \R_+$, whenever the initial data satisfies
\begin{equation}\label{70-eqn-SLW-data}
\sum_{|\alpha|\le 1}\left(\, \|Z^\alpha u_0\|_{\dot H^s} +\|Z^\alpha
u_1\|_{ \dot H^{s-1}}\, \right)<\delta.
\end{equation}
\end{thm}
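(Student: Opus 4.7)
The plan is to run a Picard iteration at the critical scaling. Let $u_{-1}\equiv 0$ and, for $k\ge 0$, let $u_k$ solve $(\pt^2-\De_g)u_k=F_p(u_{k-1})$ with data $(u_0,u_1)$. I would work in a Banach space $X$ whose norm is the sum, over $|\alpha|\le 2$, of three pieces: (i) the critical energy $\|Z^\alpha u\|_{L^\infty_t\dot H^s}+\|\pt Z^\alpha u\|_{L^\infty_t\dot H^{s-1}}$; (ii) a weighted Strichartz norm of the form $\||x|^{(n-1)(1/2-1/q)}Z^\alpha u\|_{L^q_{t,x}}$ with $(q,s)$ matched to the scaling of $|u|^p$; (iii) a KSS$_\mu$ piece $\|\<x\>^{-1/2-\mu}(Z^\alpha u)'\|_{L^2_{t,x}}$ for some $\mu\in(0,1/2)$. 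The assumption $s_d<s<1/2$ from \eqref{s_1} is precisely what makes these three pieces simultaneously finite for the homogeneous solution and scale-compatible with the nonlinearity.

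\textbf{Linear step and commutators.} Commuting $Z^\alpha$ through the equation gives
\[
(\pt^2-\De_g)(Z^\alpha u_k)=Z^\alpha F_p(u_{k-1})+[Z^\alpha,\De_g]\,u_k.
\]
The authors' weighted Strichartz of order two and KSS$_\mu$ estimates (stated in the introduction and to be proved in later sections) then give, for each $|\alpha|\le 2$,
\[
\|Z^\alpha u_k\|_X\les \|Z^\alpha(u_0,u_1)\|_{\dot H^s\times\dot H^{s-1}}+\|Z^\alpha F_p(u_{k-1})\|_{X^*}+\|[Z^\alpha,\De_g]u_k\|_{X^*}.
\]
By \eqref{H1}, $[Z^\alpha,\De_g]$ is a differential operator of order $|\alpha|+1$ whose coefficients carry a factor $\CO(\<x\>^{-\rho})$; paired against the $\<x\>^{-1/2-\mu}$ weight inside the KSS$_\mu$ norm on the left, this remainder is absorbed via a standard continuity argument as long as $\rho$ dominates the total vector-field count, i.e.\ $\rho>2$ for the second-order case (respectively $\rho>1$ for the first-order refinement).

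\textbf{Nonlinear step and closing the iteration.} The pointwise bound \eqref{Fp} yields the Leibniz-type estimate $|Z^\alpha F_p(u)|\les\sum|u|^{p-j}|Z^{\be_1}u|\cdots|Z^{\be_j}u|$ for $|\alpha|\le 2$, $j\le 2$. Combining Sobolev embedding on the unit sphere (the crucial use of the rotational vector fields, substituting for a full Sobolev embedding that is unavailable at the critical regularity $s<1/2$) with H\"older in $(t,r)$ converts this into a scale-invariant bound
\[
\sum_{|\alpha|\le 2}\|Z^\alpha F_p(u)\|_{X^*}\les \|u\|_X^p,
\]
which closes precisely because $p>p_c$ and $s>s_d$. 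Applied to differences $u_k-u_{k-1}$ (using $p>1$), one gets $\|u_k-u_{k-1}\|_X\les (\|u_{k-1}\|_X+\|u_{k-2}\|_X)^{p-1}\|u_{k-1}-u_{k-2}\|_X$, so the iteration is contractive once $\delta$ is small and produces the global solution. The $n=3$, $\rho>1$ refinement runs the identical scheme with $|\alpha|\le 1$ only, matching the relaxed hypothesis \eqref{assum} and the first-order commutator bound.

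\textbf{Main obstacle.} The hardest step is controlling the second-order commutators $[\De_g,\Om_{ij}\Om_{k\ell}]$ without spherical symmetry. In Minkowski $[\De,\Om_{ij}]=0$, but here $[\De_g,\Om_{ij}\Om_{k\ell}]$ is a second-order operator with $\CO(\<x\>^{-\rho})$ coefficients that cannot be integrated by parts away as in the symmetric case of \cite{SW}. Gaining enough $\<x\>$-decay to absorb it is exactly what forces the $\rho>2$ hypothesis and requires the new KSS$_\mu$ estimate with $\mu<1/2$, whose extra weight closes the bootstrap. Establishing the weighted Strichartz of order two in this general setting, and verifying that the nonlinear estimate still closes at $s<1/2$, is where the bulk of the technical work will lie.
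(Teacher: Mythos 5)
Your outline captures the right high‐level picture: a Picard iteration in a scale‐invariant space built from $L^\infty_t\dot H^s$ energy plus a weighted radial $L^p_t L^p_{|x|}L^2_\omega$ Strichartz piece, with the angular Sobolev embedding $\|v(r\cdot)\|_{L^\infty_\omega}\les\sum_{|\al|\le 2}\|Z^\al v(r\cdot)\|_{L^2_\omega}$ converting $|u|^{p-1}Z^\al u$ into a power of the iteration norm, and with the $\Om^2$ commutator correctly singled out as the obstruction that forces $\rho>2$. But two points in the middle of your argument are off.

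First, the KSS estimate with $0<\mu<1/2$ is not what the paper uses for this theorem. That estimate (Proposition \ref{Prop3.2}) gives a time‐dependent bound $(1+T)^{1/2-\mu}$ and enters only in the proof of the subcritical, local‐in‐time Theorem \ref{Strauss2}. For the global Strauss result the paper works with the norm $Y_{s,\ep}$, whose weight $\<x\>^{-(1/2)-s-\ep}$ with $s+\ep\in(s_d,1/2)$ has exponent strictly below $-1/2$, i.e.\ it sits in the regime $\mu>1/2$ of the standard KSS estimate (Lemma \ref{lem00}), where the constant is uniform in $T$. So the third piece of your norm $X$, as you have written it, is the wrong one for this statement: putting a KSS$_\mu$ with $\mu<1/2$ into the global iteration norm would import a growing‐in‐$T$ constant and the argument would not close globally.

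Second, and more seriously, the commutator step does not go through by a ``standard continuity argument.'' You propose to commute $Z^\al$ through the nonlinear equation and carry $[Z^\al,\Delta_g]u_k$ as an error term, but this term is \emph{linear} in $u_k$ with coefficients that decay but are not small in any operator norm, and it involves derivatives of order $|\al|+1$, i.e.\ one order higher than what the norm $X$ at level $|\al|$ controls. There is no smallness to absorb it, and a naive Gronwall/continuity loop does not recover the missing derivative. The paper handles this mismatch through a dedicated family of weighted estimates for the \emph{linear} flow, most notably Lemma \ref{lem0.2} (a bound on $\|\<x\>^{-(1/2)-\ep}\pa_x^N u\|_{L^2_t\dot H^{s-1}}$ by $N$‐th order data in $\dot H^{N+s-1}\cap\dot H^s$, exploiting the negative index $s-1$ to trade one spatial derivative), together with Lemmas \ref{lem1}, \ref{lem0.1}, \ref{lem7}, and \ref{lem6}. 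All of the commutator work is done inside the proof of the linear estimates (Theorem \ref{mainest}, Section 2), so that the iteration in Section 4 applies the \emph{already‐proved} two‐derivative inhomogeneous estimate \eqref{70-est-Strauss-KeyHigh2} directly to $u^{(k)}$ solving $(\pt^2-\Delta_g)u^{(k)}=F_p(u^{(k-1)})$, and no commutator term ever appears there. Without supplying a substitute for those lemmas, the step in your proposal where the commutator is ``absorbed'' is a genuine gap.
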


We also have the following existence result for
$2\le p< p_c$ when $n=3$, where the lifespan is almost sharp (see \cite{Zhou1} for the blow up results).
\begin{thm}
\label{Strauss2} Suppose \eqref{H1} and \eqref{H2} hold with $\rho
> 2$, $n=3$, and $2\le p<p_c=1+\sqrt 2$.
Then there exists $c>0$ and $\delta_0>0$ depending on $p$ so that
\eqref{eq} has a solution in $[0,T_\de]\times \R^3$ satisfying
$(Z^\alpha u(t,\cdot), \partial_t Z^\alpha u(t,\cdot))\in \dot{H}^s
\times \dot{H}^{s-1}$, $|\alpha|\le 2$, $t\in [0, T_\de]$, with
\beeq\label{s_2}
s=s_d,\ T_\de=c\ \de^{\frac{p(p-1)}{p^2-2p-1}+\ep},
\eneq
 whenever
$\ep>0$ and the initial data satisfies \eqref{70-eqn-SLW-data} with $\de<\de_0$. Moreover, we can relax the assumption for $\rho$
to $\rho>1$, when $F$ satisfies \eqref{assum} and $s=s_d+\ep'$ for some small $\ep'>0$.
\end{thm}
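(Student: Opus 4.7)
The plan is a standard contraction-mapping iteration built on the linear estimates developed earlier in the paper: the new KSS estimates with weight parameter $0 < \mu < 1/2$ and the weighted Strichartz estimates of order two. Since we are in the subcritical range $p < p_c$ and work at the scale-invariant decay regularity $s = s_d$, the homogeneous linear norms carry no intrinsic time decay, so the nonlinear estimate will produce a factor $T^\be$ for some $\be=\be(p,\mu)>0$, which we absorb by restricting $T \le T_\de$.

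Concretely, set $u_{-1}\equiv 0$ and iteratively solve $(\pa_t^2-\De_g)u_k=F_p(u_{k-1})$ with the prescribed data. Bundle the energy, KSS, and weighted Strichartz quantities for $Z^\al u$, $|\al|\le 2$, at regularity $\dot H^{s_d}\times\dot H^{s_d-1}$ into one norm $\|u\|_X$. The linear theory yields $\|u_k\|_X\les \de+\|F_p(u_{k-1})\|_Y$ for the appropriate dual source norm $Y$. The nonlinear estimate is then
\[ \|F_p(u)\|_Y \les T^{\be}\,\|u\|_X^{\,p}, \]
derived from the pointwise bound $|Z^\al F_p(u)|\les \sum |u|^{p-j}\prod|Z^{\al_i}u|$ (coming from \eqref{Fp}) by distributing factors among the components of $X$ and applying H\"older in $t\in[0,T]$. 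The KSS weight $\mu\in(0,1/2)$ is chosen to minimize $\be$; as $\mu\uparrow 1/2$ one computes $\be\to (1+2p-p^2)/p$, and the $\vep$-loss in \eqref{s_2} reflects having to keep $\mu$ strictly below the endpoint.

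Closing the iteration on the ball $\{u:\|u\|_X\le 2C\de\}$ requires $CT^{\be}(2C\de)^{p-1}\le 1/2$, i.e.\ $T\les \de^{(1-p)/\be}$, which with the above choice of $\be$ is exactly \eqref{s_2}. The differences $u_k-u_{k-1}$ solve a linear equation with source $F_p(u_{k-1})-F_p(u_{k-2})$, and the same bound combined with the Lipschitz part of \eqref{Fp} yields contraction, giving a fixed point in $X$. For the relaxed case $\rho>1$ the scheme runs identically with only $|\al|\le 1$ and first-order commutators, so that \eqref{assum} suffices; the shift $s=s_d+\vep'$ is needed to recover the Sobolev embeddings used in the nonlinear estimate when the second-order vector-field norms are unavailable.

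The principal difficulty is attaining the sharp lifespan exponent up to the $\vep$-loss. This demands both (a) the full flexibility of the new KSS estimates throughout $0<\mu<1/2$, so that $\be$ can be pushed arbitrarily close to its optimum, and (b) accurate second-order commutator estimates between the fields in $Z$ and the variable-coefficient $\De_g$---exactly the ingredients constructed in the body of the paper. Once both are in hand, the iteration above closes routinely and delivers the stated lifespan $T_\de$.
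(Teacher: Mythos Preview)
Your proposal is correct and follows essentially the same iteration scheme as the paper's Section~4.2: one combines the local-in-time weighted Strichartz estimates of Theorem~\ref{mainest2} with the energy estimates \eqref{highorderenergyest}, places the source in $L^1_t X_{1-s,0,\infty}'\subset L^1_t\dot H^{s-1}$, and closes a contraction on $[0,T_\de]$ with the exponent you state. One small inaccuracy worth correcting: the weight parameter $a$ in the Strichartz norm is not obtained by optimizing over $\mu\uparrow 1/2$ but is fixed by the algebraic matching condition $p\bigl[(n-1)(\tfrac12-\tfrac1p)-a\bigr]=1-s-\tfrac n2$, and the $\ep$-loss in \eqref{s_2} comes from the $\ep$ already present in \eqref{highorderest2} (arising from the commutator control in Corollary~\ref{coro}), not from stopping short of an endpoint.
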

\begin{rem}
The above result for $p<p_c$ is a natural extension of Theorem 4.1 in Chapter 4 of
Sogge \cite{So2} and Theorem 4.2 of Hidano \cite{H}. See also
Theorem 4.1 of Yu \cite{Yu09} and Theorem 6.1 of \cite{JWY} for
closely related ${\dot H}^{s_d}$-results.\end{rem}

For convenience we define the norm $Y_{s,\ep}$ as
$$\|f(x)\|_{Y_{s,\ep}}=\|\<x\>^{{-(1/2)-s-\ep}}f(x)\|_{L^2_x}\ .$$
The main estimate we will need to prove Theorem \ref{Strauss} is as
follows.
\begin{thm}\sl \label{mainest}
Let $u$ be the solution of the linear equation
\begin{equation} \label{LW}
\begin{cases}
(\partial_t^2+P)u(t,x)=F(t,x), \quad (t,x)\in \R_+\times \R^n
\\
u(0,x)=u_0(x), \quad \partial_t u(0,x)=u_1(x), \quad x\in \R^n
\end{cases}
\end{equation}
with $F=0$. Assume that \eqref{H1} and \eqref{H2} hold with $\rho >
2$, $n\ge 3$, $2< p\le \infty$ and $s\in (s_d, 1)$. For all
$\ep>0$ and $\eta>0$
  small enough, we have
\begin{equation}
\label{highorderest}\sum_{|\al|\le 2}\|Z^\al u \|_{L^2_t
Y_{s,\ep}}+\||x|^{n/2-(n+1)/p-s-\ep}Z^\al u\|_{L^p_t
L^p_{|x|} L^{2+\eta}_{\om}(\{|x|>1\})}\les
 \sum_{|\al|\le 2} \left(\Vert Z^\al u_0\Vert_{{\dot H}^{s}}+
 \Vert Z^\al u_1\Vert_{ \dot H^{s-1}}\right) \ ,\end{equation}
 and for $s\in[0,1]$,
 \begin{equation}
\label{highorderenergyest}\sum_{|\al|\le 2}\left ( \|Z^\al u
\|_{L^\infty_t \dot H^s}+\|\pt Z^\al u\|_{L^\infty_t \dot
H^{s-1}}+\|Z^\al u\|_{L^p_t L^{q_s}_x(|x|\leq 1)}\right) \les
 \sum_{|\al|\le 2} \left(\Vert Z^\al u_0\Vert_{{\dot H}^{s}}+
 \Vert Z^\al u_1\Vert_{ \dot H^{s-1}}\right) \ ,
 \end{equation}
 where $q_s=2n/(n-2s)$. 
 On the other hand, if we assume $\rho>1$ instead of $\rho>2$, we have
 the same estimates of first order $(|\al|\le 1)$.
\end{thm}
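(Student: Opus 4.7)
The plan is to induct on the order $|\alpha|$ of vector fields, reducing the higher-order versions of \eqref{highorderest}--\eqref{highorderenergyest} to the base case $|\alpha|=0$ by means of a Duhamel formula and a commutator analysis that exploits the decay in \eqref{H1}.

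\textbf{Base case.} For $|\alpha|=0$ each of the four left-hand-side norms is supplied by now-standard linear theory for the wave equation on asymptotically Euclidean non-trapping manifolds. The $L^\infty_t\dot H^s$ energy bound follows from the self-adjointness and spectral calculus of $P$; the local Strichartz piece $\|u\|_{L^p_tL^{q_s}_x(|x|\le1)}$ is a consequence of the variable-coefficient Strichartz theory; the KSS-type estimate with weight $\<x\>^{-1/2-s-\ep}$ for $s\in(s_d,1/2)$ is the main new ingredient and is obtained from the generalized variable-coefficient KSS bounds of \cite{MeSo06_01,HiWaYo}; the exterior weighted Strichartz norm follows by a spatial cutoff to $|x|>1$ together with the Euclidean weighted Strichartz estimate in the spirit of \cite{SW}.

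\textbf{Induction via Duhamel.} For $|\alpha|\ge 1$, the quantity $Z^\alpha u$ satisfies
\[
(\pa_t^2+P)\,Z^\alpha u \,=\, [P,Z^\alpha]\,u,\qquad \bigl(Z^\alpha u,\pa_t Z^\alpha u\bigr)\big|_{t=0}=\bigl(Z^\alpha u_0, Z^\alpha u_1\bigr).
\]
I would split $Z^\alpha u = v_\alpha + w_\alpha$, where $v_\alpha$ is the homogeneous solution with the data on the right and $w_\alpha$ the inhomogeneous solution with zero data and source $[P,Z^\alpha]u$. Applying the base-case estimates to $v_\alpha$ directly produces the desired RHS of \eqref{highorderest}--\eqref{highorderenergyest}. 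For $w_\alpha$ one uses the inhomogeneous (dual) form of the same linear estimates, bounding each LHS norm of $w_\alpha$ by an appropriate dual norm of the source — schematically a weighted $L^2_tL^2_x$ norm with weight $\<x\>^{1/2+s+\ep}$ for the KSS piece and $L^1_t\dot H^{s-1}$ for the energy/Strichartz pieces.

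\textbf{Commutator analysis.} Since each $Z\in\{\pa_i,\Om_{ij}\}$ commutes with the flat Laplacian $\Delta$, we have $[Z,P]=[Z,P-\Delta]$, so the commutator coefficients inherit the decay of $g_{ij}-\delta_{ij}$ from \eqref{H1}. Using $\tilde\pa_i,\tilde\Om_{ij}$ as an organizing basis, $[P,Z^\alpha]u$ for $|\alpha|\le 2$ is a finite sum of second-order operators with $\<x\>^{-\rho}$-decaying coefficients acting on $Z^\beta u$, $|\beta|\le|\alpha|-1$. To push this through the dual estimate of the previous step, I would rewrite the plain Euclidean derivatives appearing in the source as combinations of vector fields in $Z$ and of $P$ (in $|x|\gtrsim1$ via the radial-angular decomposition $\pa_i=\frac{x_i}{|x|^2}\sum_j x_j\pa_j+\frac{1}{|x|^2}\sum_j x_j\Om_{ij}$, and in $|x|\lesssim1$ by elliptic regularity of $P$), then use the wave equation to trade $PZ^\beta u$ for $-\pa_t^2 Z^\beta u$ plus strictly lower-order commutator terms. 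The condition $\rho>2$ provides exactly the integrability needed to carry out an $\ep$-small-constant absorption for $|\alpha|\le 2$; when only a single commutator is present, $\rho>1$ suffices, giving the first-order version.

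\textbf{Main obstacle.} The principal difficulty is the commutator bookkeeping in the last step. While $[P,\pa_i]$ gains an extra $\<x\>^{-1}$ factor from differentiating the metric perturbation, $[P,\Om_{ij}]$ gains nothing from the rotational vector field and sees only the raw $\<x\>^{-\rho}$ decay of the coefficient, which is precisely what forces $\rho>2$ at second order. More seriously, converting the Euclidean second-order derivatives in $[P,\Om_{ij}]Z^\beta u$ into second-order vector-field expressions in $Z$ modulo lower-order terms controllable by the inductive KSS/energy norms is the technical commutation issue bypassed by spherical symmetry in \cite{SW}; handling it for a general metric using only \eqref{H1} is the crux of the argument, and is what enables the extension from $n=3$ spherically-symmetric to general $n=3,4$ asymptotically Euclidean.
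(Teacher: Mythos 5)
Your high-level plan (Duhamel decomposition $Z^\alpha u = v_\alpha + w_\alpha$, base case $|\alpha|=0$ from \cite{SW}, commutator source $[P,Z^\alpha]u$ with coefficients inheriting the decay of $g_{ij}-\delta_{ij}$, induction on $|\alpha|$) matches the paper, but the execution has a genuine gap in the choice of dual norm, and the conversion of $\partial_x$-derivatives in the commutator is not handled correctly.

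The decisive gap is in the treatment of the energy and Strichartz pieces of the inhomogeneous part $w_\alpha$. You propose to bound them by the source in $L^1_t\dot H^{s-1}$. That cannot close: the commutator source $[P,\Om^2]u$ contains terms like $r_{-1}\partial_x^3 u$ (and $[P,\Om]u$ contains $r_0\partial_x^2 u$), which have \emph{spatial} decay $\<x\>^{-\rho+1}$ but \emph{no} decay in time, so $\int_0^\infty\|r_{-1}\partial_x^3u(t)\|_{\dot H^{s-1}}\,dt$ is not controlled by the data. The paper sidesteps this by never touching $L^1_t$: it proves an inhomogeneous estimate bounding $\|w\|_{L^\infty_t\dot H^s}$ and $\|w\|_{L^2_t Y_{s,\ep}}$ by the source in \emph{weighted} $L^2_t\dot H^{s-1}$, i.e.\ $\|\<x\>^{1/2+\ep}F\|_{L^2_t\dot H^{s-1}}$ (Lemmas \ref{lem1} and \ref{lem0.1}, obtained via a $TT^*$/duality argument from the KSS estimate and Christ--Kiselev), and then closes the loop by showing $\|\<x\>^{-1/2-\ep}\partial_x^\alpha u\|_{L^2_t\dot H^{s-1}}$ is controlled by the data (Lemma \ref{lem0.2}). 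This weighted $L^2_t\leftrightarrow L^2_t$ structure is the essential new mechanism; your schematic with $L^1_t\dot H^{s-1}$ replaces it with a norm that the source does not live in.

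The secondary issue is the device you propose for converting Euclidean derivatives in the source into $Z$ and $P$. The radial--angular identity you write still contains $r\partial_r=\sum_j x_j\partial_j$, which is not in the collection $Z=\{\partial_x,\Om_{ij}\}$ and carries full derivative order, so the decomposition is circular and does not reduce the commutator to lower-order vector-field expressions. The paper instead works directly with $P$ and its square root: Lemma~\ref{lem7} (Lemma 4.8 of \cite{BoHa}) bounds $\<x\>^{-\mu}\widetilde\partial^k u$ by $\<x\>^{-\mu}\widetilde\partial P^j u$ and $\<x\>^{-\mu}P^j u$, Lemma~\ref{lem0} relates $P^{1/2}$ to $\partial_x$ in $\dot H^s$, and Lemma~\ref{lem6} gives the elliptic interchange $\|\partial_x^2 f\|_{\dot H^s}\lesssim\|Pf\|_{\dot H^s}+\|f\|_{\dot H^s}$. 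Powers of $P$ then commute with $e^{itP^{1/2}}$ exactly; no trading of $P$ for $-\partial_t^2$ via the equation is performed. Finally, your explanation of where $\rho>2$ enters (``$\ep$-small-constant absorption'') is imprecise: in the paper $\rho>2$ is used to ensure $\<x\>^{1+2\ep}r_{-1}\in L^\infty\cap\dot W^{1,n}$ so the worst coefficient in $[P,\Om^2]$ can be peeled off by the fractional Leibniz rule, not to run an absorption argument.
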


Here, the angular mixed-norm space $L^p_{|x|}L^r_\omega$ is defined
as follows
$$\|f\|_{L^p_{|x|}L^r_\om({\mathbb R}^n)}=
\left(\, \int_0^\infty \, \Bigl( \, \int_{\mathbb{S}^{n-1}}
|f(\la \omega)|^r \, d\omega \, \Bigr)^{p/r}\, \la^{n-1} d\la\,
\right)^{1/p}\ ,$$ which is consistent with the usual Lebesgue space
$L^p_x$ when $p=r$.

Recall that Theorem \ref{mainest}, with order $0$ ($|\al|=0$) and
$\rho>0$, has been proved in Theorem 1.6 of \cite{SW} for any
$s\in (s_d, 1]$ in general. However, the estimates with higher
order derivatives are much more complicated. As we will see, one
of the main difficulties in the proof is that we need to establish
the relation between $P$ and the vector fields $Z$, where only the
powers of $P$ can be commutated with the equation $\pt^2+P$. The
most difficult part of the commutators comes from the commutator
of $P$ and the rotational vector fields $\Omega_{ij}$. Another difficulty arises from the estimates with second order derivatives, and the techniques we use here will require the assumption $\rho>2$ instead of $\rho>1$.

To obtain Theorem \ref{Strauss2} we will need the following local
in time weighted Strichartz estimates .
\begin{thm}\sl \label{mainest2}
Let $u$ be the solution of \eqref{LW} with $F=0$. Assume that
\eqref{H1} and \eqref{H2} hold with $\rho > 2$, $n\ge 3$,
$0<a<1/p$, $2\leq p< \infty$ and $s=s_d$. Then we have \beeq
\label{highorderest2}\sum_{|\al|\le 2}
\|\<x\>^{-a}|x|^{(n-1)s}Z^\al u \|_{L^p_t
L^p_{|x|}L^2_{\omega}([0,T]\times \R^n)}\les (1+T)^{(1/p)-a+\ep}\sum_{|\al|\le 2} \left  (\Vert Z^\al u_0\Vert_{{\dot
H}^{s}}+ \| Z^\al u_1\|_{ \dot H^{s-1}} \right ). \eneq
 On the other hand, if we assume $\rho>1$ instead of $\rho>2$, we have
 the same estimates of first order $(|\al|\le 1)$, with $s=s_d+\ep'$ for small enough $\ep'>0$.
\end{thm}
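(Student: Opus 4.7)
The approach mirrors that of Theorem \ref{mainest}: first commute the vector fields through the equation to reduce to the scalar ($|\al|=0$) case, then establish the zeroth-order weighted Strichartz estimate. The novel ingredient relative to Theorem \ref{mainest} is that at the endpoint $s=s_d$ one must accept a $T$-dependent loss, which is supplied by the KSS-type estimate for $0<\mu<1/2$ derived from the variable-coefficient bounds of \cite{MeSo06_01, HiWaYo}.

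\textbf{Step 1 (commutator reduction).} For $|\al|\le 2$ write
\begin{equation*}
(\pt^2+P)(Z^\al u)=-[P,Z^\al]u,
\end{equation*}
and invoke the commutator identities already developed for Theorem \ref{mainest}. Under $\rho>2$, the commutators $[P,\pa_x]$, $[P,\Om_{ij}]$ and their iterates up to order two produce coefficients whose $\pa_x^\ga$-derivatives decay like $\<x\>^{-|\ga|-\rho}$; the resulting forcing is absorbed through the inhomogeneous (Duhamel) form of the target estimate, set up exactly as in the proof of Theorem \ref{mainest}. The weaker assumption $\rho>1$ admits only first-order commutators, which explains the restrictions $|\al|\le 1$ and $s=s_d+\ep'$ in the second half of the statement.

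\textbf{Step 2 (zeroth-order estimate).} It remains to prove, for a free solution $v$ of $(\pt^2+P)v=0$,
\begin{equation*}
\|\<x\>^{-a}|x|^{(n-1)s_d}v\|_{L^p_tL^p_{|x|}L^2_\om([0,T]\times\R^n)}\les(1+T)^{1/p-a+\ep}\bigl(\|v_0\|_{\dot H^{s_d}}+\|v_1\|_{\dot H^{s_d-1}}\bigr).
\end{equation*}
On the near region $|x|\le 1$ both spatial weights are bounded, and the claim follows from \eqref{highorderenergyest} at $s=s_d$ after converting the $L^{q_{s_d}}_x$ estimate on the unit ball to $L^p_{|x|}L^2_\om$ by H\"older in the angle (supplemented by angular Sobolev via the rotational vector fields in the range where the direct Lebesgue embedding on the sphere fails). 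On $|x|>1$ we combine the fixed-time trace-type inequality
\begin{equation*}
\||x|^{(n-1)s_d}v(t)\|_{L^p_{|x|}L^{2+\eta}_\om(|x|>1)}\les\|v(t)\|_{\dot H^{s_d}},
\end{equation*}
obtained by sending $s\downarrow s_d$ in the $|\al|=0$ case of Theorem \ref{mainest}, with the KSS-type bound
\begin{equation*}
\|\<x\>^{-1/2-\mu-\ep}v\|_{L^2_t\dot H^{s_d}_x([0,T]\times\R^n)}\les(1+T)^{1/2-\mu+\ep}\bigl(\|v_0\|_{\dot H^{s_d}}+\|v_1\|_{\dot H^{s_d-1}}\bigr),
\end{equation*}
valid for $0<\mu<1/2$ by \cite{MeSo06_01, HiWaYo}. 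Interpolating these two bounds in the time variable, choosing $\mu$ so that $(2/p)(1/2-\mu)=1/p-a$, and using $L^{2+\eta}_\om\hookrightarrow L^2_\om$ on the unit sphere, produces the claimed $L^p_tL^p_{|x|}L^2_\om$ estimate with exactly the loss $(1+T)^{1/p-a+\ep}$.

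\textbf{Step 3 (main obstacle).} The principal difficulty lies in Step 1 for $|\al|=2$: the double commutator $[P,\Om_{ij}\Om_{k\ell}]$ produces terms involving two derivatives of the metric, which is what forces the assumption $\rho>2$ (the same mechanism behind this hypothesis in Theorem \ref{mainest}). A secondary delicate point, in Step 2, is the precise interpolation choice of $\mu$ producing exactly the announced $T$-exponent; this makes essential use of the full range $0<\mu<1/2$ of the KSS estimate rather than only the endpoint $\mu=1/2$ treated in \cite{SW}, which is precisely the KSS improvement highlighted in the introduction.
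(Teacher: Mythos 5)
Your outline reaches for the right ingredients (the $0<\mu<1/2$ KSS bound, a trace inequality, interpolation), but Step~2 has a genuine gap. The proposed fixed-time trace $\||x|^{(n-1)s_d}v(t)\|_{L^p_{|x|}L^{2+\eta}_\om(|x|>1)}\les\|v(t)\|_{\dot H^{s_d}}$ is not obtainable by ``sending $s\downarrow s_d$'' in Theorem~\ref{mainest}: that theorem requires $s>s_d$ strictly with a constant that degenerates at the endpoint, and the angular improvement to $L^{2+\eta}_\om$ rests on \eqref{trace-g}, which needs $s>1/2$, whereas $s_d<1/2$ for $p>2$; moreover the weight appearing in Theorem~\ref{mainest} at $s=s_d$, $\ep=0$ is $|x|^{(n-1)s_d-1/p}$, not $|x|^{(n-1)s_d}$, so the two do not even match formally. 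Even granting some fixed-time trace in $L^p_{|x|}L^2_\om$, interpolating ``in the time variable'' with the $L^2_t\dot H^{s_d}_x$-valued KSS bound cannot yield the target $L^p_tL^p_{|x|}L^2_\om$ norm: the spatial norms ($L^p_{|x|}L^2_\om$ versus $\dot H^{s_d}_x$) and the weights ($|x|^{(n-1)s_d}$ versus $\<x\>^{-1/2-\mu-\ep}$) at the two endpoints are incompatible and do not interpolate to a weighted $L^p_{|x|}L^2_\om$ norm.

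The missing ingredient is the endpoint trace lemma in its sharp Besov form, $\||x|^{(n-1)/2}e^{itP^{1/2}}f\|_{L^\infty_tL^\infty_{|x|}L^2_\om}\les\|f\|_{\dot B_{2,1}^{1/2}}$, obtained from \eqref{endpointtrace} combined with energy estimates and real interpolation of the propagator between $\dot H^0$ and $\dot H^1$. This must be the second endpoint because the interpolation has to be carried out jointly in the $t$ and radial $|x|$ variables: real interpolation at $\theta=2/p$ between the $\mu$-KSS estimate in $L^2_TL^2_{|x|}L^2_\om$ (weight $\<x\>^{-\mu}$, data $L^2$) and the Besov trace in $L^\infty_TL^\infty_{|x|}L^2_\om$ (weight $|x|^{(n-1)/2}$, data $\dot B_{2,1}^{1/2}$) then sends both the time and the radial exponent to $p$, combines the weights to $\<x\>^{-a}|x|^{(n-1)s_d}$ with $a=2\mu/p$, interpolates the data spaces to $\dot H^{s_d}$ via the real interpolation identity for Besov spaces, and yields the constant $(1+T)^{1/p-a}$. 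This Besov endpoint is not reachable from Theorem~\ref{mainest} by any limiting argument. As for the higher orders, the paper does not run a Duhamel commutator reduction on the Strichartz inequality as in your Step~1; it instead proves higher order versions of each endpoint separately (Corollary~\ref{coro} via commutators for the KSS piece; Proposition~\ref{lem3} plus real interpolation for the Besov trace piece) and then interpolates, a less critical but still real structural difference from what you sketched.
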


\begin{rem}
Note that the estimates in the above two theorems are given for
solutions of $(\pt^2+P)u=F$, which has the benefit that the solution can
be represented by the following formula $$u(t)=\cos (t P^{1/2})u_0+P^{-1/2}\sin (tP^{1/2})
u_1+\int_0^t P^{-1/2}\sin ((t-s)P^{1/2}) F(s)ds\ . $$ All of the operators occurring in this formula commutates with the wave operator $\pt^2+P$. In general, an
estimate for $-\Delta_g$ will corresponds another estimate for $P$.
For example, if we have the estimate \eqref{highorderest} for $P$,
consider the equation
\begin{equation} \label{70-eqn-LW-transf}
\begin{cases}
(\partial_t^2-\Delta_{g})v(t,x)=G(t,x), \quad (t,x)\in
\R_+\times \R^n
\\
u(0,x)=v_0(x), \quad \partial_t v(0,x)=v_1(x), \quad x\in \R^n.
\end{cases}
\end{equation}
Notice that if we let $u=g v$ and $F=g G$, then
\beeq\label{70-est-EquiEqn}(\pt^2-\Delta_{g})v=G \Leftrightarrow
(\pt^2+P)u=F.\eneq Thus we have also the estimate
\eqref{highorderest} for $-\Delta_g$.
\end{rem}

The paper is arranged as follows. In Section 2 we prove the weighted Stricharz
estimates and energy estimates (i.e. Theorem
\ref{mainest}); In Section 3 we prove higher order KSS estimates and local in time weighted Strichartz estimates (i.e. Theorem \ref{mainest2}); Finally in Section 4 we will see
how Theorem \ref{mainest} and Theorem \ref{mainest2} imply the
Strauss conjecture when $n=3,4$.


\section{Weighted Strichartz and Energy Estimates}
In this section, we will give the proof of our main estimates
\eqref{highorderest} and \eqref{highorderenergyest}.

In what follows, ``remainder terms", $r_{j}$, $j\in \N$, will
denote any smooth functions such that \begin{equation} \label{H3}
\partial^{\alpha}_{x} r_{j} (x) = O \big( \< x \>^{-\rho - j - \vert \alpha \vert} \big) ,\
\forall\al \ ,
\end{equation}
thus $P=-g\Delta_g g^{-1}=-\Delta+r_0\pa^2+r_1\pa+r_2$.

\subsection{Preparation}

Before we go through the proof of the main theorems, we will present
several useful lemmas. The first one is the KSS estimates
(Keel-Smith-Sogge estimates) on asymptotially Euclidean manifolds
obtained in \cite{BoHa} and \cite{SW}, and the second one gives the
relation between the operators $P^{1/2}$ and $\pa_x$.

\begin{lem}[KSS estimates]\sl \label{lem00}
Assume that \eqref{H1} and \eqref{H2} hold with $\rho > 1$.  Let $N
\ge 0$, $ \mu\ge 1/2$ and
\begin{equation*}
A_{\mu} (T) = \left\{ \begin{aligned}
&(\log (2+T))^{-1/2} &&\mu = 1/2 ,   \\
&1 &&\mu > 1/2 .
\end{aligned} \right.
\end{equation*}
Then the solution of \eqref{LW} satisfies
\begin{align}\label{KSS}
\sup_{0\le t\le T } \sum_{1 \leq k + j \leq N+1} \big\Vert
\partial_{t}^{k} P^{j/2}g u (t & , \cdot ) \big\Vert_{L^2_x} +
\sum_{\vert \alpha \vert \leq N} A_\mu(T)
 \big\Vert \<x\>^{-\mu} \left(|(\Gamma^{\alpha} u)'|+  \frac{|\Gamma^{\alpha} u|}{\<x\>}\right)
 \big\Vert_{L^2_T L^2_x} \nonumber \\
& \lesssim \sum_{\vert \alpha \vert \leq N} \big\Vert (Z^{\alpha}u)' (0, \cdot ) \big\Vert_{L^2_x}
 + \sum_{\vert \alpha \vert \leq N}\big\Vert \Gamma^{\alpha} F(s, \cdot ) \big\Vert_{L^1_T
 L^2_x}\ ,
\end{align} where $L^q_T L^r_x=L^q([0,T]; L^r(\R^n))$.
\end{lem}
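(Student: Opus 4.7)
The plan is to combine a pure energy estimate for $\pt^k P^{j/2} gu$, which exploits that $\pt$ and $P$ commute with the wave propagator, with the local-energy bound established in \cite{BoHa} and refined in \cite{SW}, and then to promote the latter to include the vector fields $\Ga^\al$ by a commutator induction.

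For the energy piece, I would apply $\pt^k P^{j/2}$ to $(\pt^2+P)u=F$; because these operators commute with $\pt^2+P$, the function $\pt^k P^{j/2} u$ satisfies the same linear wave equation with source $\pt^k P^{j/2} F$. A standard multiplier argument (pair with $\pt$ of itself and integrate) together with the self-adjointness and non-negativity of $P$ yields an $L^2$ bound on $\pt^k P^{j/2} gu$ in terms of initial data and $\|\pt^k P^{j/2}F\|_{L^1_T L^2_x}$, and summation over $1\le k+j\le N+1$ produces the first term on the left of \eqref{KSS}. The base weighted bound with $|\al|=0$ follows from the positive commutator method with a radial multiplier of the form $f(r)\pa_r+f'(r)/2$, where $f(r)=\int_0^r\<s\>^{-2\mu}ds$: the flat part of $P=-\De+r_0\pa^2+r_1\pa+r_2$ supplies the positive contribution controlling $\<x\>^{-\mu}(|u'|+|u|/\<x\>)$, the remainder perturbation is absorbed via \eqref{H1}, and the non-trapping hypothesis \eqref{H2} provides the local-energy decay needed to close the estimate on any bounded region. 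The logarithm in $A_{1/2}$ reflects the borderline case $\mu=1/2$ exactly as in the Minkowski prototype \eqref{OKSS}.

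To put $\Ga^\al$ on the left-hand side, I induct on $|\al|$. Since $[\pt,\pt^2+P]=0$, time derivatives pass harmlessly through the equation. For $Z\in\{\pa_x,\Om_{ij}\}$, a direct computation with $P=-\De+r_0\pa^2+r_1\pa+r_2$ gives
\[
[Z,P]=\tilde r_0\,\pa^2+\tilde r_1\,\pa+\tilde r_2,
\]
where each $\tilde r_j$ satisfies the same decay as in \eqref{H3}; for $Z=\Om_{ij}$ this uses $\Om_{ij}\<x\>=0$, so hitting a remainder with a rotation preserves its type. Iterating, $(\pt^2+P)\Ga^\be u$ is a sum of previously controlled expressions plus terms of the form $(\text{remainder})\cdot \Ga^\al u$ with $|\al|<|\be|$. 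Applying the base KSS estimate to $\Ga^\be u$ and absorbing the perturbative contribution using the $\<x\>^{-\mu}$ weight closes the induction.

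The main obstacle is the commutator $[\Om_{ij},P]$. Although the $r_j$'s decay at infinity, they carry plain spatial derivatives that do not themselves belong to $\Ga$; one must therefore rewrite such a $\pa$ in terms of $\Ga$ at the price of a factor $\<x\>$, which must be offset by the decay of the $r_j$'s. This is precisely why $\rho>1$ is imposed here, and it is the same mechanism that, pushed to second order, forces the stronger assumption $\rho>2$ in Theorems \ref{Strauss} and \ref{mainest}.
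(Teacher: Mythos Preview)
The paper does not prove this lemma at all; it simply cites Theorem~1.3 of \cite{SW}. Your sketch is, in broad strokes, an outline of the argument one finds there and in the precursor \cite{BoHa}, so in that sense you are reconstructing the source rather than diverging from the paper.

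Two points in your sketch need correction. First, the absorption in the induction step cannot be done by applying the KSS \emph{inequality} (with $L^1_T L^2_x$ forcing) to $\Ga^\be u$: the commutator $[P,\Ga^\be]u$ is only controlled in weighted $L^2_T L^2_x$, and Cauchy--Schwarz in time would cost an unacceptable factor $T^{1/2}$. The absorption has to take place at the level of the quadratic-form identity produced by the multiplier $f(r)\pa_r+\tfrac12 f'(r)$, where the forcing enters bilinearly as $\int (\Ga^\be u)'\cdot[P,\Ga^\be]u\,dx\,dt$; then the decay of the commutator coefficients allows one to pair the two factors with matching $\<x\>^{\pm\mu}$ weights and close. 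In practice the order of the induction also matters: one first establishes the estimate with $\Ga$ replaced by $\pa_{t,x}$ alone (immediate, since $\pt^k P^{j/2}$ commutes with $\pt^2+P$; cf.\ Remark~\ref{rem} and (3.6) of \cite{SW}), and only then handles the $\Om_{ij}$'s, whose commutators produce purely $\pa_x$-terms already under control.

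Second, your last paragraph misidentifies the obstruction. Since $\pa_x\in Z\subset\Ga$, there is no need to ``rewrite $\pa$ in terms of $\Ga$ at the price of a factor $\<x\>$.'' The reason $\rho>1$ enters is that rotations are homogeneous of degree zero, so $[\Om_{ij},P]$ has top-order coefficient $\tilde r_0=O(\<x\>^{-\rho})$ with \emph{no} gain over $r_0$; the bilinear absorption above then requires $\<x\>^{\mu}\tilde r_0\les\<x\>^{-\mu}$ for $\mu$ just above $1/2$, i.e.\ $\rho>1$. This is the same mechanism that, iterated once more, forces $\rho>2$ for the second-order estimates later in the paper.
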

\begin{proof}
This is Theorem 1.3 in \cite{SW}.
\end{proof}

\begin{rem}\label{rem}
Here, we notice that the estimate \eqref{KSS} still holds if we replace $\Gamma$ and $Z$ with $\pa_{x}$ in
\eqref{KSS} $($see $(3.6)$ in \cite{SW}$)$. Moreover, we will see later
in Proposition \ref{Prop3.2} that the corresponding estimates for
$0<\mu<1/2$ also hold.
\end{rem}

The next lemma gives the relation between the operators $\pa_x$
and $P^{1/2}$.
\begin{lem}\label{lem0}
If $s\in[-1,1]$, then \beeq\label{d}\|u\|_{\dot{H}^s}\simeq
\|P^{s/2}u\|_{L^2_x}.\eneq If $s\in[0,1]$,
\beeq\label{f}\|\tilde{\pa}_j u\|_{\dot{H}^{-s}} \les \|P^{1/2}
u\|_{\dot{H}^{-s}},\eneq \beeq\label{e}\|P^{1/2} u\|_{\dot{H}^s}\les
\sum_j\|\tilde{\pa}_j u\|_{\dot{H}^s}.\eneq Moreover, we have for
$s\in (0,2]$ and $1<q <n/s$, \beeq\label{g}\|P^{s/2}u\|_{L^q_x}\les
\|u\|_{\dot{H}^{s,q}}. \eneq
\end{lem}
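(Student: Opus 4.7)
The plan is to reduce everything to the self-adjointness of $P$ together with the single quadratic form identity
\[
\|P^{1/2}u\|_{L^2}^2 = \langle Pu,u\rangle = \int g^{ij}\,\tilde\pa_i u\,\tilde\pa_j u\, g^2\,dx,
\]
which follows from $P=-g\Delta_g g^{-1}$ and the self-adjointness of $-\Delta_g$ on $L^2(g^2\,dx)$. Assumption \eqref{H1} makes $g^{ij}$ uniformly elliptic and $g$ bounded above and below, so this form is comparable to $\sum_j\|\tilde\pa_j u\|_{L^2}^2$, which gives both \eqref{e} and \eqref{f} at $s=0$ with equivalence. For \eqref{d} at $s=1$, I expand $\tilde\pa_j u = g^{-1}\pa_j u - g^{-2}(\pa_j g) u$; since $|\pa g|\les\<x\>^{-\rho-1}$, Hardy's inequality absorbs the $(\pa g)u$ contribution into $\|\nabla u\|_{L^2}$, yielding $\|P^{1/2}u\|_{L^2}\simeq\|u\|_{\dot H^1}$. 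Combined with the trivial $s=0$ case, the L\"owner--Heinz inequality applied to the comparable non-negative operators $P$ and $-\Delta$ extends \eqref{d} to all $s\in[0,1]$, and self-adjointness/duality gives $s\in[-1,0]$.

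Using \eqref{d}, the $s=1$ case of \eqref{f} becomes $\|P^{-1/2}\tilde\pa_j u\|_{L^2}\les\|u\|_{L^2}$. Dualizing with $\tilde\pa_j^*=-g^{-1}\pa_j$ and using $g^{-1}\in L^\infty$, this reduces to $\|\pa_j P^{-1/2}v\|_{L^2}\les\|v\|_{L^2}$, which is \eqref{d} at $s=1$ applied to $w=P^{-1/2}v$. Interpolation with $s=0$ then yields all $s\in[0,1]$. For \eqref{e} at $s=1$, \eqref{d} reduces the inequality to $\|Pu\|_{L^2}\les\sum_{j,k}\|\tilde\pa_j\tilde\pa_k u\|_{L^2}$; expanding both sides via $P=-\Delta+r_0\pa^2+r_1\pa+r_2$ and $\tilde\pa_j\tilde\pa_k=\pa_j\pa_k+(\text{decaying l.o.t.})$ shows that both are equivalent to $\|D^2u\|_{L^2}$ modulo errors controlled by Hardy's inequality and the $s=0$ case of \eqref{e}, and interpolation yields the range $s\in[0,1]$.

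Finally, \eqref{g} amounts to $L^q$-boundedness of $P^{s/2}(-\Delta)^{-s/2}$ for $s\in(0,2]$, $1<q<n/s$, which combined with $\|(-\Delta)^{s/2}u\|_{L^q}\simeq\|u\|_{\dot H^{s,q}}$ gives the estimate. At $s=2$, the decomposition
\[
P(-\Delta)^{-1}=I+r_0\pa^2(-\Delta)^{-1}+r_1\pa(-\Delta)^{-1}+r_2(-\Delta)^{-1}
\]
exhibits a Calder\'on--Zygmund term plus two terms handled by H\"older against $r_1\in L^n$ and $r_2\in L^{n/2}$ (both ensured by $\rho>0$) followed by Sobolev embedding; the cutoff $q<n/s$ is precisely what makes the embeddings close. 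Complex interpolation with the trivial $s=0$ case then gives $s\in(0,2)$.

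I expect the main obstacle to be careful bookkeeping of the commutator and lower-order terms produced by the decaying coefficients $r_j$ and by the factors $g^{-1}$ hidden in $\tilde\pa_j$, especially in the $s=1$ case of \eqref{e}, where one must lean on Hardy's inequality and the $s=0$ estimate to absorb them into the principal $\|D^2u\|_{L^2}$ contribution.
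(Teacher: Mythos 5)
The paper does not actually prove this lemma; it cites it as Lemma 2.4 of \cite{SW}, so your blind attempt is really a replacement proof rather than a recreation of the paper's argument. Your framework for \eqref{d} and \eqref{f} --- the quadratic form identity $\langle Pu,u\rangle=\int g^{ij}(\tilde\pa_i u)(\tilde\pa_j u)\,g^2\,dx$, the L\"owner--Heinz inequality, duality, and interpolation --- is sound and gives those two estimates correctly.

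However, there is a genuine gap in your argument for \eqref{e} at $s=1$. You propose to expand $P=-\Delta+r_0\pa^2+r_1\pa+r_2$ and to show that both $\|Pu\|_{L^2}$ and the right-hand side are comparable to $\|D^2u\|_{L^2}$ modulo errors controlled by Hardy. But the zeroth-order term produces $\|r_2 u\|_{L^2}\lesssim\|\<x\>^{-\rho-2}u\|_{L^2}$, and for $n=3,4$ (which is precisely the range of interest in this paper) there is no homogeneous Hardy--Rellich inequality $\||x|^{-2}u\|_{L^2}\lesssim\|D^2u\|_{L^2}$; that requires $n\geq 5$. The ``$s=0$ case of \eqref{e}'' you invoke gives $\|P^{1/2}u\|_{L^2}\lesssim\sum_j\|\tilde\pa_j u\|_{L^2}$, which controls first-order quantities, not $\|u\|_{L^2}$ with a rapidly decaying weight, so it does not rescue this term either. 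The resolution is that \eqref{e} should \emph{not} be proved by expanding $P$ in terms of $u$. Since $\tilde\pa_j u=\pa_j w$ with $w=g^{-1}u$, one has $\sum_j\|\tilde\pa_j u\|_{\dot H^1}\simeq\|D^2w\|_{L^2}$, and the identity $Pu=-g\Delta_g w$ has no zeroth-order term in $w$: explicitly,
\[
Pu=-g\sum_{ij}g^{ij}\pa_i\pa_j w-\frac{1}{g}\sum_{ij}\pa_i(g^{ij}g^2)\,\pa_j w,
\]
so $\|Pu\|_{L^2}\lesssim\|D^2w\|_{L^2}+\|\<x\>^{-\rho-1}\nabla w\|_{L^2}\lesssim\|D^2w\|_{L^2}$ by the first-order Hardy inequality alone, valid for all $n\geq 3$. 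The same reformulation also fixes a related imprecision: $\|\tilde\pa_j u\|_{\dot H^1}=\sum_k\|\pa_k\tilde\pa_j u\|_{L^2}$, not $\sum_k\|\tilde\pa_k\tilde\pa_j u\|_{L^2}$, and while those agree modulo lower order, your attempt to prove a two-sided comparison $\|D^2w\|_{L^2}\simeq\sum_{j,k}\|\tilde\pa_j\tilde\pa_k u\|_{L^2}$ again runs into a non-absorbable Hardy constant; working directly with $\|D^2w\|_{L^2}$ avoids the issue. Finally, the ``interpolation yields $s\in[0,1]$'' step for \eqref{e} should be phrased as operator boundedness --- the map $w\mapsto P^{1/2}(gw)$ from $\dot H^{1+s}$ to $\dot H^s$ --- so that complex interpolation applies in the usual way; as written it is an a priori bound of coercivity type and does not interpolate on its own.
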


\begin{proof}
This is just Lemma 2.4 in \cite{SW}.
\end{proof}

The following three lemmas are proved to deal with the commutator
terms we will encounter in the proof of our higher order estimates
\eqref{highorderest} and \eqref{highorderenergyest}.

\begin{lem}
\label{lem1} Let $u$ solve the wave equation \eqref{LW}. Then for
any $s\in[0,1]$ and $\ep>0$, we have:
\beeq\label{70-est-Morawetz-LowOrder2}\|u\|_{L^2_t
Y_{s,\ep}}\lesssim \|u_0\|_{\dot H^s}+\|u_1\|_{\dot
H^{s-1}}+\|\<x\>^{(1/2)+\ep}F\|_{L^2_t \dot{H}^{s-1}}\eneq
\end{lem}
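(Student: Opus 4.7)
My plan is to establish the estimate at the two endpoints $s=0$ and $s=1$ separately, and then recover the full range $s\in[0,1]$ by complex interpolation.

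For the $s=1$ case, the inequality reduces to the weighted Morawetz-type estimate
\[\|\<x\>^{-3/2-\ep}u\|_{L^2_tL^2_x}\lesssim \|u_0\|_{\dot H^1}+\|u_1\|_{L^2}+\|\<x\>^{1/2+\ep}F\|_{L^2_tL^2_x}.\]
I would prove it by pairing $(\pa_t^2+P)u=F$ with a Morawetz multiplier of the form $\chi(r)\pa_r u$, with the radial weight $\chi$ chosen so that the resulting positive commutator produces the weight $\<x\>^{-1/2-\ep}$ in front of $u'$; the Hardy inequality then converts control of $u'$ into the desired control of $\<x\>^{-3/2-\ep}u$. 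The decay hypothesis \eqref{H1} controls the lower-order contributions coming from $P-(-\Delta)=r_0\pa^2+r_1\pa+r_2$, and the absorption step is of the same kind that underlies Lemma \ref{lem00}. The $F$-integral is handled by Cauchy--Schwarz, pairing $\|\<x\>^{1/2+\ep}F\|_{L^2_tL^2_x}$ against $\|\<x\>^{-1/2-\ep}\pa_r u\|_{L^2_tL^2_x}$ and absorbing the latter into the LHS. This is essentially the $L^2_t$-forcing variant of the KSS estimate that appears in \cite{MeSo06_01} and \cite{HiWaYo}.

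For the $s=0$ case, the inequality
\[\|\<x\>^{-1/2-\ep}u\|_{L^2_tL^2_x}\lesssim \|u_0\|_{L^2}+\|u_1\|_{\dot H^{-1}}+\|\<x\>^{1/2+\ep}F\|_{L^2_t\dot H^{-1}}\]
is a Kato-type local smoothing estimate. Using the spectral representation $u=\cos(tP^{1/2})u_0+P^{-1/2}\sin(tP^{1/2})u_1+\int_0^tP^{-1/2}\sin((t-s)P^{1/2})F(s)\,ds$, the homogeneous pieces reduce to $\|\<x\>^{-1/2-\ep}e^{\pm itP^{1/2}}v\|_{L^2_tL^2_x}\lesssim \|v\|_{L^2}$, which is a consequence of the uniform resolvent bounds for $P$ established in Bony--H\"afner \cite{BoHa} under the non-trapping hypothesis \eqref{H2}. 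The Duhamel contribution is controlled by $TT^*$; the passage between $\|\<x\>^{1/2+\ep}P^{-1/2}F\|_{L^2_tL^2_x}$ and $\|\<x\>^{1/2+\ep}F\|_{L^2_t\dot H^{-1}}\simeq \|P^{-1/2}(\<x\>^{1/2+\ep}F)\|_{L^2_tL^2_x}$ requires bounding the commutator $[\<x\>^{1/2+\ep},P^{-1/2}]$, which can be handled using the norm equivalences of Lemma \ref{lem0} together with the pseudodifferential calculus afforded by \eqref{H1}, provided $\ep$ is sufficiently small.

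Finally, complex interpolation recovers the intermediate range $s\in(0,1)$: the linear solution map $(u_0,u_1,F)\mapsto u$ is bounded at both endpoints, and the LHS norm $\|\<x\>^{-1/2-s-\ep}u\|_{L^2_tL^2_x}$ (via the analytic family $\<x\>^{-z}$) together with the data and forcing norms on the RHS (via Lemma \ref{lem0}) form complex interpolation scales in $s$, so Stein's interpolation theorem yields the general case. The main obstacle in this plan is the $s=0$ endpoint, which depends essentially on spectral/resolvent theory for non-trapping asymptotically Euclidean metrics and on the commutator analysis between the weight $\<x\>^{1/2+\ep}$ and $P^{-1/2}$; by contrast, the $s=1$ endpoint is a standard positive-commutator computation in the spirit of \cite{KSS} and \cite{SW}.
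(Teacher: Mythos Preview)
Your overall plan---endpoint estimates at $s=0$ and $s=1$ followed by interpolation---matches the paper, and your treatment of $s=1$ is essentially the same. The genuine gap is at $s=0$, in the inhomogeneous (Duhamel) term. Your $TT^*$ argument applied to $P^{-1/2}\sin((t-s)P^{1/2})F$ produces the norm $\|\<x\>^{1/2+\ep}P^{-1/2}F\|_{L^2_tL^2_x}$, and you then assert that the commutator $[\<x\>^{1/2+\ep},P^{-1/2}]$ can be handled so as to replace this by $\|P^{-1/2}(\<x\>^{1/2+\ep}F)\|_{L^2_tL^2_x}\simeq\|\<x\>^{1/2+\ep}F\|_{L^2_t\dot H^{-1}}$. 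That step fails in dimension $n=3$: if $F$ is a fixed compactly supported bump, then $P^{-1/2}F(x)\sim|x|^{-(n-1)}$ at infinity, so $|\<x\>^{1/2+\ep}P^{-1/2}F|^2\sim|x|^{3-2n+2\ep}$ and $\int_{\R^n}|x|^{3-2n+2\ep}\,dx=\int r^{2-n+2\ep}\,dr$ diverges for $n=3$ and any $\ep>0$. Thus $\|\<x\>^{1/2+\ep}P^{-1/2}F\|_{L^2_x}=\infty$ while $\|\<x\>^{1/2+\ep}F\|_{\dot H^{-1}}<\infty$, and no pseudodifferential calculus or smallness of $\ep$ rescues the bound. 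The growing weight $\<x\>^{1/2+\ep}$ simply does not commute past $P^{-1/2}$ in the direction you need.

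The paper sidesteps this by a duality argument instead of your $TT^*$/commutator route. From the KSS estimate (Lemma~\ref{lem00}) one first upgrades the $s=1$ inhomogeneous bound to the stronger form $\|\<x\>^{-1/2-\ep}u\|_{L^2_t\dot H^1_x}\lesssim\|\<x\>^{1/2+\ep}F\|_{L^2_{t,x}}$ (for vanishing data), and then pairs $u$ against a backward solution $(\pa_t^2+P)v=G$ using self-adjointness of $P$ to read off directly $\|\<x\>^{-1/2-\ep}u\|_{L^2_{t,x}}\lesssim\|\<x\>^{1/2+\ep}F\|_{L^2_t\dot H^{-1}_x}$. The point is that the growing weight stays attached to $F$ (or $G$) throughout and is never commuted past $P^{-1/2}$. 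Interpolation between this and the $s=1$ estimate then finishes the inhomogeneous part, and the homogeneous part is simply the order-zero case of \eqref{highorderest} already established in \cite{SW}, with no separate appeal to resolvent estimates needed.
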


\begin{proof} We give first the proof in the case $u_0=u_1=0$.
First, from Remark 2.1 in \cite{SW} we know
\beeq\label{est1}
\big\Vert \< x \>^{-(3/2)-\ep} u
  \big\Vert_{L^2 ( \R \times \R^{n} )} \lesssim
    \Vert \< x \>^{(1/2)+\ep} F\Vert_{L^2 ( \R \times \R^{n} )}.\eneq
Next, using the KSS estimates on asymptotically Euclidean manifolds (Lemma 2.1 in \cite{SW}) together
with \eqref{est1}, we have
    \begin{eqnarray} \label{est2} \big\Vert \< x \>^{-(1/2)-\ep} u
  \big\Vert_{L^2_t\dot{H}^1 ( \R \times \R^{n} ) }&\lesssim&
\big\Vert \< x \>^{-(3/2)-\ep} u \big\Vert_{L^2 ( \R \times
\R^{n} )}+\big\Vert \< x \>^{-(1/2)-\ep} u'
  \big\Vert_{L^2 ( \R \times \R^{n} )}\nonumber \\
  &\lesssim&   \Vert \< x \>^{(1/2)+\ep} F\Vert_{L^2 ( \R \times \R^{n} )}.\end{eqnarray}
Since $P$ is self adjoint, for any fixed $T>0$, if we let $\Box_P v=(\pa_t^2+P)v=G$ with vanishing initial data at $T$, then
\begin{eqnarray}
 \big\Vert \< x \>^{-(1/2)-\ep} u
\big\Vert_{L^2 ( [0,T] \times \R^{n} )}&=& \sup_{\|\<x\>^{1/2+\ep}G\|_{L^2([0,T]\times\R^n)}\le 1}\<u,G\>\nonumber\\
&=& \sup_{\|\<x\>^{1/2+\ep}G\|_{L^2([0,T]\times\R^n)}\le 1}\<\Box_P u,v\>\nonumber \\
&\lesssim&
    \Vert \< x \>^{(1/2)+\ep} \Box_P u\Vert_{L^2_t\dot H^{-1}( [0,T] \times \R^{n} )}\|\<x\>^{-(1/2)-\ep}v\|_{L^2_t\dot H^1([0,T]\times\R^n)}\nonumber\\
    &\les&\Vert \< x \>^{(1/2)+\ep} F\Vert_{L^2_t\dot H^{-1}( [0,T] \times \R^{n} )}\|\<x\>^{(1/2)+\ep}G\|_{L^2([0,T]\times\R^n)} \nonumber\\
    &\les& \Vert \< x \>^{(1/2)+\ep} F\Vert_{L^2_t\dot H^{-1}( \R \times \R^{n} )}.\nonumber\end{eqnarray}
Since the constants in the inequality are independent with $T$, we get
\beeq\label{est3}
\big\Vert \< x \>^{-(1/2)-\ep} u
\big\Vert_{L^2 ( \R \times \R^{n} )}\les \Vert \< x \>^{(1/2)+\ep} F\Vert_{L^2_t\dot H^{-1}( \R \times \R^{n} )}.
\eneq
Now we can get the desired estimate
\eqref{70-est-Morawetz-LowOrder2} for $u_0=u_1=0$ by an
interpolation between \eqref{est1} and \eqref{est3}. The estimate
with $F=0$ follows just from the estimate
\eqref{highorderest} of order $0$, which is proved in \cite{SW}.
\end{proof}

\begin{lem}
\label{lem0.1} Let $w$ solve the wave equation \eqref{LW} with
$u_0=u_1=0$. Then for $s\in[0,1]$ and $\ep>0$, \beeq\label{est0}
\|w\|_{L_t^\infty\dot{H}_x^s}\les
\|\<x\>^{1/2+\ep}F\|_{L_t^2\dot{H}^{s-1}_x}. \eneq
\end{lem}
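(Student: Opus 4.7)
The plan is to run a duality argument reducing the estimate to Lemma~\ref{lem1} and the KSS estimate of Lemma~\ref{lem00}, together with complex interpolation. Writing $\|w(t_0)\|_{\dot H^s}=\sup\{|\<w(t_0),\varphi\>|:\|\varphi\|_{\dot H^{-s}}\le 1\}$, and letting $v$ solve $(\pt^2+P)v=0$ on $[0,t_0]$ with terminal data $v(t_0)=0$, $\pt v(t_0)=-\varphi$, integration by parts in $t$ together with the self-adjointness of $P$ yields the Green identity
\[
\<w(t_0),\varphi\>=\int_0^{t_0}\<F(t),v(t)\>_{L^2_x}\,dt.
\]
Inserting the trivial factor $\<x\>^{1/2+\ep}\cdot\<x\>^{-1/2-\ep}\equiv 1$ into the $L^2_x$ pairing and applying the $\dot H^{s-1}\times\dot H^{1-s}$ duality slicewise, followed by Cauchy-Schwarz in $t$, gives
\[
|\<w(t_0),\varphi\>|\les\|\<x\>^{1/2+\ep}F\|_{L^2_t\dot H^{s-1}}\cdot\|\<x\>^{-1/2-\ep}v\|_{L^2_t\dot H^{1-s}}.
\]

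It thus suffices to prove $\|\<x\>^{-1/2-\ep}v\|_{L^2_t\dot H^{1-s}}\les\|\varphi\|_{\dot H^{-s}}$ for $s\in[0,1]$, where by time reversal $v$ is the forward free solution of \eqref{LW} with data $(0,\varphi)$. I would verify this at the two endpoints and then interpolate. For $s=1$, the bound reads $\|\<x\>^{-1/2-\ep}v\|_{L^2_{t,x}}\les\|\varphi\|_{\dot H^{-1}}$, which is exactly Lemma~\ref{lem1} (with $F=0$ and Sobolev exponent $0$) applied to $v$. For $s=0$, using the product-rule estimate $\|\<x\>^{-1/2-\ep}v\|_{\dot H^1}\les\|\<x\>^{-1/2-\ep}v'\|_{L^2}+\|\<x\>^{-3/2-\ep}v\|_{L^2}$, the required bound $\|\<x\>^{-1/2-\ep}v\|_{L^2_t\dot H^1}\les\|\varphi\|_{L^2}$ follows from the KSS estimate of Lemma~\ref{lem00} with $\mu=1/2+\ep>1/2$ and $N=0$, since $\|v'(0)\|_{L^2}=\|\varphi\|_{L^2}$.

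The main obstacle is the intermediate range $s\in(0,1)$. I would handle it by complex interpolation of the linear solution map $\varphi\mapsto \<x\>^{-1/2-\ep}v$, which is bounded $\dot H^{-1}\to L^2_tL^2_x$ at one endpoint and $L^2\to L^2_t\dot H^1$ at the other, yielding boundedness $\dot H^{-s}\to L^2_t\dot H^{1-s}$ for $s\in[0,1]$. The spaces $\dot H^{\al}$ form a complex-interpolation scale via the spectral calculus of $P$ (cf.\ Lemma~\ref{lem0}, equation~\eqref{d}), so the interpolation step is routine but is the main technical ingredient beyond the two endpoint estimates. A secondary point is that the slicewise weighted Sobolev duality is legitimate because the plain $L^2$ pairing is unchanged upon inserting $\<x\>^{1/2+\ep}\cdot\<x\>^{-1/2-\ep}$, after which the standard homogeneous Sobolev duality applies to the weighted functions.
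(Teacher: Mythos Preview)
Your argument is correct and in fact slightly cleaner than the paper's, though the overall architecture---prove the endpoints $s=0,1$ from KSS-type input and then interpolate---is the same. The difference lies in how the endpoints are reached. The paper treats $s=1$ by writing $w=P^{-1/2}\int_0^t\sin((t-\tau)P^{1/2})F(\tau)\,d\tau$, applying a $TT^*$ argument to the KSS bound $\|\<x\>^{-1/2-\ep}e^{itP^{1/2}}f\|_{L^2_{t,x}}\les\|f\|_{L^2}$, and then invoking the Christ--Kiselev lemma to pass from the untruncated to the truncated Duhamel integral; the case $s=0$ is handled by a duality argument analogous to the one producing \eqref{est3}. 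You instead perform the duality once at the outset via the Green identity, which reduces everything to the homogeneous estimate $\|\<x\>^{-1/2-\ep}v\|_{L^2_t\dot H^{1-s}}\les\|\varphi\|_{\dot H^{-s}}$ for the free solution with data $(0,\varphi)$. This buys two things: the truncation to $[0,t_0]$ is built into the Green identity so Christ--Kiselev is not needed, and your interpolation takes place between $L^2_t$-valued spaces rather than $L^\infty_t$-valued ones, which is technically more straightforward. Both routes ultimately rest on the same KSS input (Lemma~\ref{lem00} and the $s=0$ case of Lemma~\ref{lem1}/\eqref{6.7}).
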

\begin{proof}
We will show this estimate by interpolation. For $s=1$, notice
that KSS estimates in Lemma \ref{lem00} give us
$$\|\<x\>^{{-1/2-\ep}}e^{itP^{1/2}}f\|_{L_{t,x}^2}\les \|f\|_{L^2_x}\ .$$
After the standard $TT^{*}$ argument, we get
$$\left\|\int_\R e^{-isP^{{1/2}}}G(s,\cdot) ds\right\|_{L_x^2}\les\|\<x\>^{{1/2}+\ep}G(t,x)\|_{L^2_{t,x}}\ ,$$
and so
\begin{eqnarray*}
\left\|\int_\R e^{i(t-s)P^{1/2}}F(s) d s\right\|_{L_t^\infty L^2_x}&\les&
\left\|\int_\R e^{-isP^{1/2}}F(s)d s\right\|_{L^2_x}\\
&\les&\|\<x\>^{(1/2)+\ep}F\|_{L^2_{t,x}}\ .
\end{eqnarray*}
Thus by the Christ-Kiselev lemma (cf. \cite{CK}) we have
$$
\left\|\int_0^t e^{i(t-s)P^{1/2}}F(s)ds\right\|_{L_t^\infty L^2_x}\les
\|\<x\>^{(1/2)+\ep}F\|_{L^2_{t,x}}\ .
$$
Recall that $w= P^{-1/2} \int_0^t \sin ((t-s)P^{1/2}) F(s) d
s$, then we get the proof of \eqref{est0} for the case $s=1$ as
follows,
$$
\|w\|_{L_t^\infty\dot{H}^1}\approx
\|P^{1/2}w\|_{L^\infty_tL^2_x}\les
\|\<x\>^{(1/2)+\ep}F\|_{L^2_{t,x}}\ .$$ For $s=0$, by \eqref{KSS},
$$\|\<x\>^{-(1/2)-\ep} w \|_{L_t^2\dot H^1}\les \|\<x\>^{-(3/2)-\ep}w\|_{L_{t,x}^2}+\|\<x\>^{-(1/2)-\ep}\pa_x w\|_{L^2_{t,x}}
\les \| F\|_{L^1_t L^2_x}\ . $$
The above inequality, combined with a similar duality argument for \eqref{est3}, gives
$$\|w\|_{L_t^\infty L_x^2}\les\|\<x\>^{(1/2)+\ep }F\|_{L_t^2\dot H^{-1}}\ ,$$
which is just the estimate for $s=0$. This completes the proof if
we interpolate between the estimates for $s=0$ and $s=1$.
\end{proof}

On the basis of the above two lemmas, we can control the commutator terms
by a kind of weighted $L^2_t\dot H_x^{s-1}$ norm. Then with the
following lemma we will be able to bound this norm by the good
terms, thus we can use the argument as in \cite{SW} to get over
the difficulty on error terms.
\begin{lem}
\label{lem0.2} Let $n\ge 3 $, $N\ge 1$ and $u$ be the solution to
\eqref{LW} with $F=0$. Then for any $s\in[0,1]$, $\ep>0$ and
$|\al|=N$, we have
\beeq \label{forlem5-2}\sum_{|\al|=
N}\|\<x\>^{-(1/2)-\ep}\pa_x^\al u\|_{L^2_t\dot
H^{s-1}}\les\|u_0\|_{\dot H^{N+s-1}\cap \dot H^s}+\|u_1\|_{\dot
H^{N+s-2}\cap \dot H^{s-1}}\ .\eneq
\end{lem}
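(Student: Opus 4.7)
The plan is to establish the estimate at the two endpoints $s=1$ and $s=0$ and then recover the full range $s\in(0,1)$ by complex interpolation in the spatial Sobolev index. The right-hand side is an intersection of homogeneous Sobolev norms; the standard identity $[A_0\cap B_0, A_1\cap B_1]_\theta=[A_0,A_1]_\theta\cap[B_0,B_1]_\theta$ together with $[\dot H^{s_0},\dot H^{s_1}]_\theta=\dot H^{(1-\theta)s_0+\theta s_1}$ gives the correct intersection on the interpolated data side, while the weight $\<x\>^{-(1/2)-\ep}$ on the left is $s$-independent and simply factors through the interpolated target $[L^2_t\dot H^{-1}_x,L^2_tL^2_x]_s=L^2_t\dot H^{s-1}_x$.

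For the endpoint $s=1$ I invoke the $\pa_x$-version of the KSS estimate (Lemma \ref{lem00} together with Remark \ref{rem}) at order $N-1$, with $\mu=(1/2)+\ep>1/2$ and $F=0$. The contribution of $|(\pa_x^\be u)'|$ for $|\be|\le N-1$ already dominates $\sum_{|\al|=N}\|\<x\>^{-(1/2)-\ep}\pa_x^\al u\|_{L^2_{t,x}}$, and the data side collapses to $\|u_0\|_{\dot H^1\cap \dot H^N}+\|u_1\|_{L^2\cap \dot H^{N-1}}$, which is precisely the desired bound at $s=1$.

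For the endpoint $s=0$ I first trade one spatial derivative for one order of negative Sobolev smoothness by duality. Writing $\pa_x^\al=\pa_{x_j}\pa_x^{\al-e_j}$, pairing $\<x\>^{-(1/2)-\ep}\pa_x^\al u$ against a test function $\phi\in \dot H^1$, and integrating by parts, the Leibniz expansion of $\pa_{x_j}(\<x\>^{-(1/2)-\ep}\phi)$ produces a principal term bounded by $\|\<x\>^{-(1/2)-\ep}\pa_x^{\al-e_j}u\|_{L^2}\|\phi\|_{\dot H^1}$; the commutator term, in which $\pa_{x_j}\<x\>^{-(1/2)-\ep}=O(\<x\>^{-(3/2)-\ep})$, is absorbed by writing $\<x\>^{-(3/2)-\ep}=\<x\>^{-(1/2)-\ep}\cdot \<x\>^{-1}$ and applying the Hardy inequality $\|\<x\>^{-1}\phi\|_{L^2}\les\|\phi\|_{\dot H^1}$ (which uses $n\ge 3$). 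Taking $L^2_t$ norms reduces the problem to bounding $\sum_{|\be|=N-1}\|\<x\>^{-(1/2)-\ep}\pa_x^\be u\|_{L^2_{t,x}}$. When $N=1$ this is exactly Lemma \ref{lem1} at $s=0$ with $F=0$, yielding $\|u_0\|_{L^2}+\|u_1\|_{\dot H^{-1}}$. When $N\ge 2$, KSS at order $N-2$ controls the sum by $\|u_0\|_{\dot H^1\cap \dot H^{N-1}}+\|u_1\|_{L^2\cap \dot H^{N-2}}$, and the intermediate exponents are then absorbed into the desired intersections via the homogeneous Sobolev interpolations $\|u_0\|_{\dot H^1}\les \|u_0\|_{L^2}^{(N-2)/(N-1)}\|u_0\|_{\dot H^{N-1}}^{1/(N-1)}\les \|u_0\|_{L^2\cap\dot H^{N-1}}$ and, analogously, $\|u_1\|_{L^2}\les\|u_1\|_{\dot H^{-1}\cap\dot H^{N-2}}$.

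The principal obstacle is this last absorption step at $s=0$ with $N\ge 2$: KSS naturally delivers the stronger intermediate norms $\|u_0\|_{\dot H^1}$ and $\|u_1\|_{L^2}$ rather than the weaker endpoints $L^2$ and $\dot H^{-1}$ that the lemma requires, and only the interplay between the homogeneous-Sobolev interpolation and the intersection structure of the right-hand side closes this gap; for $N=1$ no intermediate exponent appears and Lemma \ref{lem1} applies directly. Once both endpoints are in place, complex interpolation in $s$ yields the full statement on $[0,1]$.
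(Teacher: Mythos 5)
Your proof follows essentially the same route as the paper's: the paper also treats the two endpoints $s=1$ (directly from the $\pa_x$-version of the KSS estimate, Lemma \ref{lem00} with Remark \ref{rem}) and $s=0$ (by trading one derivative for one order of negative Sobolev smoothness via Hardy's inequality and its dual, which is exactly your pairing-against-$\phi\in\dot H^1$ argument rephrased), then interpolates in $s$. The one place where you are actually more explicit than the paper is the absorption of the intermediate data norms $\|u_0\|_{\dot H^1},\dots$ and $\|u_1\|_{L^2},\dots$ into the intersections $\dot H^{N-1}\cap L^2$ and $\dot H^{N-2}\cap\dot H^{-1}$ by homogeneous Sobolev interpolation when $N\ge2$; the paper's displayed chain passes over this and even writes a final line ($\|u_0\|_{\dot H^{N-1}}$ without the $\cap L^2$) that is only literally justified once that interpolation is invoked. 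One minor caution on your framing: the identity $[A_0\cap B_0,A_1\cap B_1]_\theta=[A_0,A_1]_\theta\cap[B_0,B_1]_\theta$ is not a general Banach-space fact (only $\subseteq$ holds in general); here it is valid because $\dot H^a\cap\dot H^b$ is a weighted $L^2$ space on the Fourier side with weight $\max(|\xi|^{2a},|\xi|^{2b})$, and the crossover points coincide for the two endpoint intersections, so the Stein--Weiss interpolation of weighted $L^2$ gives exactly the claimed intermediate intersection.
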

\begin{proof}
The estimate for $s=1$ follows directly from the KSS estimates
\eqref{KSS} and Remark \ref{rem}. Moreover, we have the following
estimate \begin{multline}\label{forlem5-3}\| \<x\>^{-(1/2)-\ep} u \|_{L^2_t
L^2_x}=\| \<x\>^{-(1/2)-\ep} P^{1/2}(P^{-1/2}u) \|_{L^2_t
L^2_x}\\\les\|P^{-1/2}u_0\|_{\dot H^1}+\|P^{-1/2}u_1\|_{L^2_x}\les \|u_0\|_{L^2_x}+\|u_1\|_{\dot H^{-1}}\ .\end{multline}

 For $s=0$, first notice that since $n\geq 3$, we
have Hardy's inequality
$$\|\left<x\right>^{-2} x h\|_{L^2_x}\les \|h\|_{\dot H^1}\ ,$$
and the duality gives
$$\|\left<x\right>^{-2} x f\|_{\dot H^{-1}}\les \|f\|_{L^2_x}\ .$$
Using the above estimate together with the KSS estimates and \eqref{forlem5-3}, we get
\begin{eqnarray*}\|\<x\>^{-(1/2)-\ep}\pa_x^\al u\|_{L^2_t\dot
H^{-1}}&\les& \|\<x\>^{-(5/2)-\ep}x\pa^{\al-1}_x u\|_{L^2_t\dot
H^{-1}}+\|\<x\>^{-(1/2)-\ep}\pa_x^{\al-1}
u\|_{L^2_tL^2_x}\\&\les&\|\<x\>^{-(1/2)-\ep}\pa_x^{\al-1}
u\|_{L^2_tL^2_x}\\
&\les& \|u_0\|_{\dot H^{N-1}}+\|u_1\|_{\dot H^{N-2}\cap \dot H^{-1}}\ .
\end{eqnarray*}
Now \eqref{forlem5-2} follows from an interpolation between $s=0$
and $s=1$.
\end{proof}

Next we give three lemmas that will be used to prove the second
order part of the estimates \eqref{highorderest} and
\eqref{highorderenergyest}.

\begin{lem}  \label{lem7}
For $0 < \mu \leq 3/2$ and $k \geq 2$, we have
\begin{equation} \label{W7}
\big\Vert \< x \>^{- \mu} \widetilde{\partial}_{j_1} \cdots
\widetilde{\partial}_{j_k} u \big\Vert_{L^2_x} \lesssim
\sum_{j=0}^{[\frac{k-1}{2}]} \big\Vert \< x \>^{- \mu}
\widetilde{\partial} P^j u \big\Vert_{L^2_x} + \sum_{j=1}^{[\frac{k}{2}]}
\big\Vert \< x \>^{- \mu} P^j u \big\Vert_{L^2_x} \ ,
\end{equation} where $[a]$ denotes the integer part of $a$ $(\max\{k\in \Z, k\le a\})$.
\end{lem}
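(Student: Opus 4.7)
My plan is to prove Lemma~\ref{lem7} by induction on $k\geq 2$, with the base case $k=2$ being a weighted elliptic regularity estimate and each inductive step trading a pair $\tilde\pa_i\tilde\pa_j$ for the operator $P$ modulo lower-order corrections.

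For the base case, I note that $\tilde\pa_i\tilde\pa_j u$ equals $g^{-2}\pa_i\pa_j u$ plus remainder-coefficient operators acting on $\tilde\pa u$ and $u$ (since $\pa g^{-1} = O(\<x\>^{-\rho-1})$). Using $P=-\Delta+r_0\pa^2+r_1\pa+r_2$ with remainder-class coefficients, a weighted elliptic regularity estimate for $-\Delta$ — obtained by integrating $|\pa^2 u|^2\<x\>^{-2\mu}$ by parts, so that derivatives of the weight only cost $\<x\>^{-2\mu-1}$ and $\<x\>^{-2\mu-2}$ — gives
$$\|\<x\>^{-\mu}\pa_i\pa_j u\|_{L^2}\lesssim \|\<x\>^{-\mu}\Delta u\|_{L^2}+\|\<x\>^{-\mu}\pa u\|_{L^2}.$$
The $r_0\pa^2$ contribution in $\Delta u = -Pu+r_0\pa^2 u+\cdots$ decays at infinity like $\<x\>^{-\rho}$ with $\rho>1$, so a cut-off argument — combined with standard interior regularity on the resulting compact region — absorbs it on the left-hand side. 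The residual lower-order pieces are controlled by Hardy's inequality, which applies because $n\geq 3$, and absorbing the first-order remainder from the expansion of $\tilde\pa_i\tilde\pa_j u$ into $\|\<x\>^{-\mu}\tilde\pa u\|_{L^2}$ yields the base case $\|\<x\>^{-\mu}\tilde\pa_i\tilde\pa_j u\|_{L^2}\lesssim \|\<x\>^{-\mu}Pu\|_{L^2}+\|\<x\>^{-\mu}\tilde\pa u\|_{L^2}$.

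For the step at $k\geq 3$, set $v=\tilde\pa_{j_3}\cdots\tilde\pa_{j_k}u$ and apply the base case to $\tilde\pa_{j_1}\tilde\pa_{j_2}v$, which reduces the left-hand side of \eqref{W7} to bounding $\|\<x\>^{-\mu}Pv\|_{L^2}+\|\<x\>^{-\mu}\tilde\pa v\|_{L^2}$. The second term is a $(k-1)$-fold $\tilde\pa$-product in $u$, controlled by the induction hypothesis at level $k-1$, whose bound is already contained in the RHS of \eqref{W7} at level $k$. For the first term, commute $P$ through the $(k-2)$ factors of $\tilde\pa$ in $v$: the leading contribution $\tilde\pa_{j_3}\cdots\tilde\pa_{j_k} Pu$ is a $(k-2)$-fold $\tilde\pa$-product in $Pu$, so the induction hypothesis at level $k-2$ applied with $u$ replaced by $Pu$ yields
$$\sum_{j=0}^{[(k-3)/2]}\|\<x\>^{-\mu}\tilde\pa P^{j+1}u\|_{L^2}+\sum_{j=1}^{[(k-2)/2]}\|\<x\>^{-\mu}P^{j+1}u\|_{L^2},$$
which, after the shift $j\mapsto j+1$ and using $[(k-3)/2]+1=[(k-1)/2]$ and $[(k-2)/2]+1=[k/2]$, matches the RHS of \eqref{W7} precisely.

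The remaining corrections come from iterated commutators $[P,\tilde\pa_{j_\ell}]$. Since $-\Delta$ commutes with $\pa_{j_\ell}$, only the perturbation $P+\Delta=r_0\pa^2+r_1\pa+r_2$ and the factor $g^{-1}$ in $\tilde\pa_{j_\ell}$ contribute, and a direct computation (using $[\Delta,\tilde\pa_{j_\ell}] = \pa_{j_\ell}[\Delta,g^{-1}]$ and the remainder form of $P+\Delta$) shows that $[P,\tilde\pa_{j_\ell}]$ is a second-order differential operator whose coefficients lie in the remainder class \eqref{H3}. Iterating produces terms of the form $\tilde\pa^\ell u$ with $\ell\leq k-1$, each multiplied by a coefficient of size $\<x\>^{-\rho}$ or better, which are absorbed via the induction hypothesis at level $\ell$. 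I expect the main technical obstacle to be the bookkeeping for these iterated commutators — confirming that each correction term drops the $\tilde\pa$-count while preserving or improving the weight $\<x\>^{-\mu}$ — but since each application of $[P,\tilde\pa]$ reduces the number of remaining $\tilde\pa$-factors by one while contributing an extra $\<x\>^{-\rho}$ in the coefficient, the induction closes cleanly.
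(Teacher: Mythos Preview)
The paper does not give its own proof of this lemma; it simply cites Lemma~4.8 of Bony--H\"afner \cite{BoHa}. Your inductive scheme---weighted elliptic regularity for $k=2$, then trading the outermost $\tilde\pa_{j_1}\tilde\pa_{j_2}$ for $P$ and commuting $P$ inward---is the natural route and is essentially correct; it is very likely the same outline as the cited argument. Two minor points worth tightening: you appeal to $\rho>1$ when absorbing the $r_0\pa^2$ perturbation, but the lemma is stated under the bare hypothesis \eqref{H1}, and smallness of $r_0$ at infinity (any $\rho>0$) already suffices for the cut-off/absorption step; and your sketch never visibly uses the restriction $\mu\le 3/2$, so either your argument yields a wider range or the constraint enters somewhere you have not flagged (in Bony--H\"afner it is tied to their weighted pseudodifferential calculus rather than to the bare integration-by-parts you outline). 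The commutator bookkeeping you anticipate is indeed routine once one notes $[P,\tilde\pa_j]$ is second order with top coefficient in the $r_1$ class and that conversion between $\pa$ and $\tilde\pa$ costs only $r_0$-type factors.
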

\begin{proof}
This is just Lemma 4.8 in \cite{BoHa}.
\end{proof}

\begin{lem}[Fractional Leibniz rule]\label{leibniz}
Let $0\le s<n/2$, $2\le p_i<\infty$ and $1/2=1/{p_i}+{1}/{q_i}\ (i=1,2)$. Then
$$\|fg\|_{\dot H^s}\les \|f\|_{L^{q_1}} \|g\|_{\dot H^{s,{p_1}}}+\|f\|_{\dot H^{s,p_2}}\|g\|_{L^{q_2}}\ .$$
Moreover, for any $s\in (-n/2, 0)\cup (0,n/2)$,
$$\|fg\|_{\dot H^s} \les \|f\|_{L^\infty\cap \dot H^{|s|,n/{|s|}}}\|g\|_{\dot H^s}\ .$$
\end{lem}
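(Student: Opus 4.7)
The first estimate is a Kato-Ponce-type fractional Leibniz rule, and my plan is to prove it via Bony's paraproduct decomposition. Let $\Delta_j$ denote the Littlewood-Paley projection at frequency $\sim 2^j$, $S_j=\sum_{k<j}\Delta_k$, and $\tilde\Delta_j=\Delta_{j-1}+\Delta_j+\Delta_{j+1}$. I write
$$fg=T_fg+T_gf+R(f,g),$$
with $T_fg=\sum_j(S_{j-2}f)(\Delta_jg)$, $T_gf=\sum_j(\Delta_jf)(S_{j-2}g)$, and $R(f,g)=\sum_j(\Delta_jf)(\tilde\Delta_jg)$ the high-high remainder. Each summand of $T_fg$ has Fourier support in an annulus of size $\sim 2^j$, so almost-orthogonality and H\"older's inequality give
$$\|T_fg\|_{\dot H^s}^2\les \sum_j 2^{2js}\|S_{j-2}f\|_{L^{q_1}}^2\|\Delta_jg\|_{L^{p_1}}^2\les \|f\|_{L^{q_1}}^2\|g\|_{\dot H^{s,p_1}}^2,$$
where the last step uses the Littlewood-Paley square function characterization of $\dot H^{s,p_1}$ together with the hypothesis $p_1\ge 2$, which puts the Minkowski exchange $\ell^2\subset L^{p_1}(\ell^2)$ in the correct direction. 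The estimate for $T_gf$ is entirely symmetric.

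The remainder $R(f,g)$ is the main point. For each output block $k$, $\Delta_k(\Delta_jf\cdot\tilde\Delta_jg)=0$ unless $j\ge k-C$, hence
$$\|\Delta_kR(f,g)\|_{L^2}\les \sum_{j\ge k-C}\|\Delta_jf\|_{L^{q_1}}\|\tilde\Delta_jg\|_{L^{p_1}}.$$
Writing $2^{ks}=2^{(k-j)s}\cdot 2^{js}$, for $s>0$ the factor $2^{(k-j)s}\mathbf{1}_{k\le j+C}$ is a summable kernel in $k-j$, so Young's inequality in $\ell^2_k$ reduces the estimate to $\sup_j\|\Delta_jf\|_{L^{q_1}}\les \|f\|_{L^{q_1}}$ times the square function $\bigl\|2^{js}\|\Delta_jg\|_{L^{p_1}}\bigr\|_{\ell^2_j}\les \|g\|_{\dot H^{s,p_1}}$; the case $s=0$ is already contained in H\"older's inequality.

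For the second inequality the same decomposition applies: $T_fg$ is controlled directly by $\|f\|_{L^\infty}\|g\|_{\dot H^s}$, while $T_gf$ and $R(f,g)$ are handled using $\|f\|_{\dot H^{|s|,n/|s|}}$ combined with the Sobolev embedding $\dot H^{s,r}\hookrightarrow L^q$ at the scaling-compatible exponents $1/q=1/2-s/n$, $1/r+1/q=1/2$. The case $s<0$ reduces to $s>0$ by duality against $\dot H^{-s}$. The main obstacle throughout is the high-high term $R(f,g)$, because the products $\Delta_jf\cdot\tilde\Delta_jg$ are not frequency-localized at $2^j$ and one must carefully resum over output frequencies $k\le j+C$; the hypothesis $p_i\ge 2$ enters precisely in the Minkowski exchange between the $L^{p_i}$-norm and the $\ell^2_j$-norm that converts the square function to the Sobolev norm.
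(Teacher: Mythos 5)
Your paraproduct strategy for the first inequality is a reasonable approach in principle, but the step that closes the estimate for $T_fg$ (and likewise for $R(f,g)$) has the Minkowski exchange going the wrong way. After almost-orthogonality and H\"older you arrive at $\sum_j 2^{2js}\|\Delta_j g\|_{L^{p_1}}^2$, which is the squared homogeneous Besov norm $\|g\|_{\dot B^{s}_{p_1,2}}^2$, and you assert this is $\les \|g\|_{\dot H^{s,p_1}}^2$. But for $p_1\ge 2$ Minkowski gives
$$\Bigl\|\bigl(2^{js}\Delta_j g\bigr)_j\Bigr\|_{L^{p_1}(\ell^2)}\ \le\ \Bigl\|\bigl(2^{js}\Delta_j g\bigr)_j\Bigr\|_{\ell^2(L^{p_1})},$$
i.e.\ $\dot B^{s}_{p_1,2}\hookrightarrow \dot H^{s,p_1}$ with the Besov norm being the \emph{larger} one when $p_1>2$. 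So the second $\les$ in your displayed chain for $T_fg$ is false for $p_1>2$; the hypothesis $p_i\ge 2$ does not rescue it, it is exactly what makes the inequality run in the unwanted direction. The correct version of this step does not apply H\"older inside the $\ell^2_j$ sum: one bounds $|S_{j-2}f|\les Mf$ pointwise (Hardy--Littlewood maximal function), keeps the square function $\bigl(\sum_j 2^{2js}|\Delta_j g|^2\bigr)^{1/2}$ intact, and only then applies H\"older in $x$, yielding $\|Mf\|_{L^{q_1}}\|g\|_{\dot H^{s,p_1}}\les\|f\|_{L^{q_1}}\|g\|_{\dot H^{s,p_1}}$. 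The remainder term $R(f,g)$ likewise requires a vector-valued (Fefferman--Stein) maximal argument rather than the $\ell^2_j\leftrightarrow L^{p_1}_x$ exchange you invoke, since the product pieces have spread-out low-frequency output; your treatment of $R$ inherits the same Besov-versus-Triebel--Lizorkin gap. The $T_gf$ and $R$ estimates in your argument for the second inequality are sketched at a level where the same concern applies.

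For what it is worth, the paper's own proof is deliberately light: the first inequality is quoted from Kon--Gulisashvili, and the second (for $s>0$) follows from it directly by taking $(p_1,q_1)=(2,\infty)$ for the term $\|f\|_{L^\infty}\|g\|_{\dot H^s}$ and $(p_2,q_2)=(n/s,\,2n/(n-2s))$ together with Sobolev embedding $\dot H^s\hookrightarrow L^{2n/(n-2s)}$ for the term $\|f\|_{\dot H^{s,n/s}}\|g\|_{\dot H^s}$; the case $s<0$ then follows by duality, exactly as you indicate at the end. If you want to keep your from-scratch paraproduct proof, you must repair the maximal-function step as above; otherwise the conclusion of the first displayed chain is simply not justified.
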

\begin{proof}
  The first inequality is well known, see, e.g., \cite{KG}. The second inequality with $s\ge 0$ is an easy consequence of the first inequality together with Sobolev embedding. Then the result for negative $s$ follows by duality.
\end{proof}

\begin{lem}
\label{lem6} For $f\in \dot H^s(\R^n)\cap \dot H^{s+2}(\R^n)$ with
$n\geq 3$ and $s\in [0,1]$, we have \beeq\label{2ndorderest}
\|\pa_x^2 f\|_{\dot H^s}\les \|Pf\|_{\dot H^s}+\|f\|_{\dot H^s}\ .
\eneq On the other
hand, 
\beeq\label{2ndorderest2} \|P f\|_{\dot
H^s}\les \sum_{|\al|\leq 2}\|\pa_x^\al f\|_{\dot H^s}\ . \eneq
\end{lem}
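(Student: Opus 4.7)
For both inequalities I would exploit the explicit splitting $P = -\Delta + r_0\pa^2 + r_1\pa + r_2$ together with the fractional Leibniz rule (Lemma~\ref{leibniz}) and the decay \eqref{H3} of the remainder coefficients $r_j$.

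The forward direction \eqref{2ndorderest2} reduces to a triangle inequality plus Leibniz. The principal part satisfies $\|\Delta f\|_{\dot H^s}\lesssim \sum_{|\al|=2}\|\pa_x^\al f\|_{\dot H^s}$; for each perturbative term, the smoothness and decay of $r_j$ places it in $L^\infty \cap \dot H^{s,n/s}$, and Lemma~\ref{leibniz} then gives $\|r_0\,\pa^2 f\|_{\dot H^s}\lesssim \|\pa^2 f\|_{\dot H^s}$, $\|r_1\,\pa f\|_{\dot H^s}\lesssim \|\pa f\|_{\dot H^s}$, and $\|r_2\,f\|_{\dot H^s}\lesssim \|f\|_{\dot H^s}$. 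The first-order contribution is absorbed via the homogeneous interpolation $\|\pa f\|_{\dot H^s}\lesssim \|f\|_{\dot H^s}^{1/2}\|\pa^2 f\|_{\dot H^s}^{1/2}$. The endpoint $s=0$ is handled directly by H\"older, sidestepping the restriction $s<n/2$ in Lemma~\ref{leibniz}.

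The reverse direction \eqref{2ndorderest} is more delicate. Solving the identity above for $-\Delta f$ gives
\[ -\Delta f = Pf - r_0\,\pa^2 f - r_1\,\pa f - r_2\,f,\]
and the $\dot H^s$-boundedness of the Riesz transforms yields
\[ \|\pa^2 f\|_{\dot H^s}\lesssim \|Pf\|_{\dot H^s}+\|r_0\,\pa^2 f\|_{\dot H^s}+\|r_1\,\pa f\|_{\dot H^s}+\|r_2\,f\|_{\dot H^s}.\]
The lower-order perturbations are absorbable by the same interpolation as above, so the crux of the argument is the top-order term $\|r_0\,\pa^2 f\|_{\dot H^s}$.

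I expect this to be the main obstacle: without a smallness hypothesis on $r_0$, the Leibniz bound $\|r_0\,\pa^2 f\|_{\dot H^s}\lesssim \|r_0\|_{L^\infty\cap \dot H^{s,n/s}}\|\pa^2 f\|_{\dot H^s}$ cannot be absorbed on the left. My plan is to decompose $r_0 = \chi_R r_0 + (1-\chi_R)r_0$, where $\chi_R$ is a smooth cutoff supported in $\{|x|\le 2R\}$ and equal to $1$ on $\{|x|\le R\}$. By \eqref{H3} the tail satisfies $\|(1-\chi_R)r_0\|_{L^\infty\cap \dot H^{s,n/s}}\lesssim R^{-\rho}$, which can be made arbitrarily small by taking $R$ large, so that contribution is absorbable. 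The compactly supported piece $\chi_R r_0\,\pa^2 f$ is reduced via an interior elliptic regularity argument on the bounded region $\{|x|\le 2R\}$ (where $P$ is a bounded perturbation of $-\Delta$), trading it for $\|Pf\|_{\dot H^s}+\|f\|_{\dot H^s}$-type quantities. As a final step, I would carry out the argument at the endpoints $s=0$ and $s=1$ and interpolate to recover the full range $s\in[0,1]$.
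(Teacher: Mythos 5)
Your proof of \eqref{2ndorderest2} follows essentially the same lines as the paper (expand $P=-\Delta+r_0\pa^2+r_1\pa+r_2$, triangle inequality, Leibniz on the remainder terms); note though that the absorption step $\|\pa f\|_{\dot H^s}\lesssim\|f\|_{\dot H^s}^{1/2}\|\pa^2 f\|_{\dot H^s}^{1/2}$ is unnecessary here, since the right-hand side of \eqref{2ndorderest2} already contains the first-order term (the paper simply keeps it).

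For \eqref{2ndorderest} your route is genuinely different from the paper's and is where the real work lies. The paper invokes elliptic regularity for $P$ only at $s=0$, then at $s=1$ estimates $\|\pa_x^3 f\|_{L^2}$ by writing $\pa_x f$ as the argument of $P$ and exploiting the \emph{commutator} $[P,\pa_x]=\sum_{|\al|\le 2}r_{3-|\al|}\pa_x^\al$ (which gains a power of decay); the fractional range is then recovered by interpolating the \emph{operator} $\pa_x^2 P^{-1/2}$ between the two endpoint bounds and separately estimating $\|P^{1/2}f\|_{\dot H^{1+s}}$ via Leibniz and an interpolation-absorption trick. You instead solve for $-\Delta f = Pf - r_0\pa^2 f - r_1\pa f - r_2 f$, apply Riesz-transform boundedness, and attack the top-order error $r_0\pa^2 f$ by a near/far cutoff: the tail $(1-\chi_R)r_0$ has small $L^\infty\cap\dot H^{s,n/s}$ norm ($O(R^{-\rho})$) and is absorbed, while the compactly supported piece is handled by interior elliptic regularity. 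This is a reasonable alternative in the spirit of how one proves variable-coefficient elliptic estimates from scratch, and it bypasses the commutator calculus.

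Two points in your sketch would need to be tightened before it is a complete proof. First, the interior-elliptic-regularity reduction of $\|\chi_R r_0\pa^2 f\|_{\dot H^s}$ to $\|Pf\|_{\dot H^s}+\|f\|_{\dot H^s}$ is not automatic: the local estimate produces $\|Pf\|_{H^s(B_{cR})}$ and $\|f\|_{L^2(B_{cR})}$, and you must convert these to the homogeneous global norms. This is doable via the Sobolev embedding $\dot H^s\hookrightarrow L^{2n/(n-2s)}$ (so e.g.\ $\|Pf\|_{L^2(B_{cR})}\lesssim_R\|Pf\|_{\dot H^s}$), but the $R$-dependence of the constants must be tracked and is not free — it just has to be finite once $R$ is fixed to make the tail absorbable. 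Second, your closing remark that you would ``carry out the argument at $s=0,1$ and interpolate'' is where a naive reader could go wrong: the inequality $\|\pa_x^2 f\|_{\dot H^s}\lesssim\|Pf\|_{\dot H^s}+\|f\|_{\dot H^s}$ cannot be interpolated directly because the right-hand side is not a Fourier-multiplier norm in $f$ (the variable-coefficient operator $P$ sits there). The paper resolves exactly this by precomposing with $P^{-1/2}$ and interpolating a fixed operator between two operator norms, and then post-processing $\|P^{1/2}f\|_{\dot H^{1+s}}$. If your cutoff-and-absorb argument is carried out for each fixed $s\in(0,1]$ (with $s=0$ by H\"older, as you say), no interpolation is needed and this pitfall is avoided — but then the fractional interior elliptic estimate on $B_{2R}$ has to be quoted or itself established by interpolation in the local setting.
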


\begin{proof}
First, we give the proof for the estimate \eqref{2ndorderest2}.
When $s=0$, notice that $Pf=g^{ij}\pa_i\pa_j f+r_1\pa_x f+r_2f$, we
have
$$\|Pf\|_{L^2_x}\les \|\pa_x^2 f\|_{L^2_x}+\|\pa_x f\|_{L^2_x}+\|f\|_{L^2_x}\les \|f\|_{\dot H^2\cap L^2_x}\ .$$
When $s=1$, recalling that $\pa^j r_i=O(\<x\>^{-\rho-i-j})$, by Hardy's inequality,
\beeq\nonumber\|\pa_x(r_2f)\|_{L^2_x}\les \|\pa_x(r_2)f\|_{L^2_x}+\|r_2\pa_x f\|_{L^2_x}\les \|\pa_x f\|_{L^2_x}.\eneq
Thus
\begin{eqnarray*} \|Pf\|_{\dot
H^1_x}=\|\pa_x Pf\|_{L^2_x}&\leq& \|\pa_x(g^{ij}\pa_i \pa_j
f)\|_{L^2_x}+\|\pa_x(r_1\pa_x
f)\|_{L^2_x}+\|\pa_x(r_2f)\|_{L^2_x}\\
&\les& \|\pa_x^3 f\|_{L^2_x}+\|\pa_x f\|_{L^2_x}\\
&\les& \|f\|_{\dot H^3\cap\dot H^1}\ .
\end{eqnarray*}
Our estimate \eqref{2ndorderest2} is obtained by an interpolation
between the above two estimates on $P f$.

Now we turn to the proof of the estimate \eqref{2ndorderest}.
First, when $s=0$, by elliptic property of $P$, we have
\beeq\label{lem6est1} \|\pa_x^2 f\|_{L_x^2}\les
\|Pf\|_{L_x^2}+\|f\|_{L_x^2}\ . \eneq Second, for $s=1$, using
\eqref{lem6est1},
\begin{eqnarray*}
\|\pa_x^3 f\|_{L^2_x}&\les& \|P\pa_x f\|_{L^2_x}+\|\pa_x f\|_{L^2_x}\\
&\les&\|[P,\pa_x]f\|_{L^2_x}+\|\pa_x Pf\|_{L^2_x}+\|\pa_x f\|_{L^2_x}\\
&\les&\|\sum_{|\al|\leq 2}r_{3-|\al|}\pa_x^\al
f\|_{L^2_x}+\|Pf\|_{\dot
H^1}+\|f\|_{\dot H^1}\\
&\les&\|Pf\|_{\dot H^1}+\|f\|_{\dot H^1}+\|f\|_{\dot H^2}\\
&\les&\|Pf\|_{\dot H^1}+\|f\|_{\dot H^1}+\ep\|f\|_{\dot H^3}+(1/\ep)\|f\|_{\dot H^1},\;\;\forall \ep>0.
\end{eqnarray*}
 Here we have used Hardy's inequality and the fact that $\dot H^3\cap\dot H^1\subset \dot H^2$. Now if we choose $\ep>0$ small enough and use
\eqref{2ndorderest2} with $s=0$, we have \beeq\label{lem6est2}
\|\pa_x^2 f\|_{\dot H^1}\les\|Pf\|_{\dot H^1}+\|f\|_{\dot H^1}\les
\|P P^{1/2}f\|_{L^2_x}+\|f\|_{\dot H^1}\les \|P^{1/2}f\|_{\dot
H^2}+\|P^{1/2}f\|_{L^2_x}\eneq  On the basis of \eqref{lem6est1} and
\eqref{lem6est2}, by an interpolation for the operator
$\pa^2P^{-1/2}$ and making use of Lemma \ref{lem0}, we have,
\begin{multline} \label{lem6est4}\|\pa_x^2f\|_{\dot H^s}\les
\|P^{1/2}f\|_{\dot H^{1+s}}+\|P^{1/2}f\|_{\dot
H^{s-1}}\\
\les\|P^{1/2}f\|_{\dot H^{1+s}}+\|P^{1/2+(s-1)/2}f\|_{L^2_x}\les\|P^{1/2}f\|_{\dot H^{1+s}}+\|f\|_{\dot H^s}.\end{multline}
We  need only to deal with the term $\|P^{1/2}f\|_{\dot
H^{1+s}}$. Note that for $s\in [0,1]$, we have
$$\|P^{-1/2}v\|_{\dot H^{1+s}}\les \|v\|_{\dot H^s}+\|v\|_{\dot
H^{-s}},
$$ which is true for $s=0$ (see \eqref{d}) and $s=1$ (see
\eqref{lem6est1}). Recalling that $P-g^{ij}\pa_i \pa_j=r_1
\pa_x+r_2$, and by Leibniz rule (see Lemma \ref{leibniz}), we have
for any small $0<\ep<\rho$,
\begin{eqnarray} \|P^{1/2}f\|_{\dot
H^{1+s}}&\les&
\|Pf\|_{\dot H^s}+\|Pf\|_{\dot H^{-s}}\nonumber\\
&\les&\|Pf\|_{\dot H^s}+\|f\|_{\dot H^{2-s}}+\|r_1 \pa_x f\|_{\dot H^{-s}}+\|r_2 f\|_{\dot H^{-s}}\nonumber\\
&\les&\|Pf\|_{\dot H^s}+\|f\|_{\dot H^{2-s}}+\|f\|_{\dot H^{1+\ep}}\nonumber\\
&\les& \|Pf\|_{\dot H^s}+\|f\|_{\dot H^{s}}^{\theta_1}\|f\|_{\dot
H^{2+s}}^{1-\theta_1}+\|f\|_{\dot H^{s}}^{\theta_2}\|f\|_{\dot
H^{2+s}}^{1-\theta_2},\;\;\; \text{where}\;
\theta_i\in (0,1].\nonumber\\
&\les& \|Pf\|_{\dot H^s}+\|f\|_{\dot H^{s}}^{\theta_1}\|\pa_x^2
f\|_{\dot H^{s}}^{1-\theta_1}+\|f\|_{\dot H^{s}}^{\theta_2}\|\pa^2_x
f\|_{\dot H^{s}}^{1-\theta_2}\ ,\label{lem6est3}\end{eqnarray} where
we have used the fact that $s\le 1+\ep, 2-s<2+s$ (so that
$\theta_i>0$) for $s\in (0, 1]$. Now our estimate \eqref{2ndorderest}
(for $s>0$) follows from \eqref{lem6est4} and \eqref{lem6est3}.
\end{proof}


\subsection{Proof of Theorem 1.3}

Now we are ready to give the proof of Theorem \ref{mainest}. Recall
that it has been proved in \cite{SW} that the result holds in the
case with order $0$ ($|\al|=0$) and $\rho>0$. Specifically, by KSS
estimates \eqref{KSS} and energy estimates, we have
\beeq\label{6.7}\|u\|_{L^2_t Y_{s,\ep}}+\|u\|_{L^\infty_t
\dot{H}^s}+\|\pt u\|_{L^\infty_t \dot{H}^{s-1}}\les \Vert
u_0\Vert_{\dot{H}^{s}}+\Vert u_1\Vert_{\dot H^{s-1}}\eneq for the
solution $u$ to the homogenous linear wave equation \eqref{LW} and
$s\in [0,1]$.

Recall that Fang and Wang obtained the following Sobolev
inequalities with angular regularity ((1.3) in \cite{FaWa})
\beeq\label{trace-g}\||x|^{n/2-s} f(x)\|_{L^\infty_{|x|}
L^{2+\eta}_\omega} \les \||x|^{n/2-s} f(x)\|_{L^\infty_{|x|}
H^{s-1/2}_\omega}
  \les \|  f\|_{\dot H^{s}} \eneq
for $s\in (1/2,n/2)$ and some $\eta>0$. By Lemma \ref{lem0}, we have
\begin{multline} \label{6.3} \||x|^{n/2-s} e^{i t P^{1/2}}
f(x)\|_{L^\infty_{t, |x|} L^{2+\eta}_\omega}\les \|
  e^{i t P^{1/2}} f(x)\|_{L^\infty_t\dot H^s_x}\\\les\|
  e^{i t P^{1/2}}P^{s/2} f(x)\|_{L^\infty_tL^2_x}\les
\|P^{s/2}f\|_{L^2_x}\les\|f\|_{\dot H^s}\end{multline} for $s\in (1/2, 1]$.

On the basis of KSS estimates, we can also obtain local energy decay
estimates
$$\|\phi u\|_{L^2_t H^s}\les \| u_0\Vert_{\dot{H}^{s}}+\Vert
u_1\Vert_{\dot H^{s-1}}$$ for $\phi\in C_0^\infty$ and $s\in
[0,1]$(see Lemma 2.6 in \cite{SW}). Then for any $p\geq 2$, \beeq
\label{6.8} \|\phi u\|_{L^p_t \dot{H}^s} \les \|\phi u\|_{L^2_t
\dot{H}^s}+\|\phi u\|_{L^\infty_t \dot{H}^s}\les  \Vert
u_0\Vert_{\dot{H}^{s}}+\Vert u_1\Vert_{\dot H^{s-1}}\ .\eneq

Now if we apply interpolation method, the estimates \eqref{highorderest} and
\eqref{highorderenergyest} with order 0 are direct consequences of
\eqref{6.7}, \eqref{6.3} and \eqref{6.8}. Next we will prove these
three estimates with order up to two.

\begin{prop}[Generalized Morawetz estimates]\label{lem2} Let $n\geq 3$, $s\in [0, 1)$ and $\rho>2$. Then for
the solution $u$ of the equation \eqref{LW} with $F=0$, we have
\beeq \label{lem2est}\sum_{|\al|\leq 2}\|Z^\al u\|_{L^2_t
Y_{s,\ep}}\lesssim
 \sum_{|\al|\leq 2} \left(\Vert Z^\al u_0\Vert_{{\dot H}^{s}}+
 \Vert Z^\al u_1\Vert_{\dot H^{s-1}}\right)\eneq
Moreover, if we assume only $\rho>1$ and $s\in [0,1]$, the estimate still holds with
$|\al|\le 1$.
\end{prop}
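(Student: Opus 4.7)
The case $|\alpha|=0$ is already contained in \eqref{6.7}. For $|\alpha|\ge 1$, I would set $v=Z^\alpha u$ and use that $\pt$ commutes with $P$ to get
\begin{equation*}
(\pt^2+P)v=-[Z^\alpha,P]u,\qquad v(0)=Z^\alpha u_0,\qquad \pt v(0)=Z^\alpha u_1.
\end{equation*}
Feeding this into Lemma \ref{lem1} reduces everything to estimating the forcing
\begin{equation*}
\|\<x\>^{(1/2)+\ep}[Z^\alpha,P]u\|_{L^2_t\dot H^{s-1}}\les \sum_{|\be|\le |\alpha|}\left(\|Z^\be u_0\|_{\dot H^s}+\|Z^\be u_1\|_{\dot H^{s-1}}\right).
\end{equation*}

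\noindent\textbf{Commutator structure.} Writing $P=-\De+r_0\pa^2+r_1\pa+r_2$ and using that both translations and rotations commute with the flat Laplacian, all contributions to $[Z,P]$ come from the remainder $r_j$'s. For $Z=\pa$ this obviously preserves the remainder class \eqref{H3}. For $Z=\Om_{ij}$ one gets $\Om_{ij}r_j\sim x\cdot\pa r_j=\CO(\<x\>^{-\rho-j})$, so $\Om_{ij}r_j$ is again of remainder type (no loss in decay, even though $\Om$ is weight-zero). Iterating, one obtains
\begin{equation*}
[Z^\alpha,P]u=\sum_{|\be|\le|\alpha|+1} a_{\alpha,\be}(x)\,\pa^\be u,\qquad a_{\alpha,\be}(x)=\CO(\<x\>^{-\rho})
\end{equation*}
on the top-order terms, with strictly faster decay on the lower-order ones.

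\noindent\textbf{First-order case ($\rho>1$).} The forcing is pointwise controlled by $\<x\>^{(1/2)+\ep-\rho}\pa^{\le 2}u$. Choosing $\ep$ small enough that $\rho-(1/2)-\ep>(1/2)+\ep'$, and invoking the fractional Leibniz rule (Lemma \ref{leibniz}) to move the smooth symbol $\<x\>^{(1/2)+\ep}a_{\alpha,\be}(x)$ through the $\dot H^{s-1}$ norm, we reduce to $\|\<x\>^{-(1/2)-\ep'}\pa^{\le 2}u\|_{L^2_t\dot H^{s-1}}$. Lemma \ref{lem0.2} with $N\le 2$ bounds this by $\|u_0\|_{\dot H^{1+s}\cap\dot H^s}+\|u_1\|_{\dot H^s\cap\dot H^{s-1}}$, which in turn is dominated by $\sum_{|\be|\le 1}(\|Z^\be u_0\|_{\dot H^s}+\|Z^\be u_1\|_{\dot H^{s-1}})$ since $\pa u_0$ is a component of $Zu_0$.

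\noindent\textbf{Second-order case ($\rho>2$).} Now $[Z^2,P]u$ carries up to third-order derivatives, so we must control $\|\<x\>^{(1/2)+\ep-\rho}\pa^{\le 3}u\|_{L^2_t\dot H^{s-1}}$. The assumption $\rho>2$ is used precisely so that, after applying Lemma \ref{leibniz}, the weighted factor can be placed into the form $\<x\>^{-(1/2)-\ep'}$ required by Lemma \ref{lem0.2} with $N=3$; the resulting initial-data norms $\|u_0\|_{\dot H^{2+s}\cap\dot H^s}$ and $\|u_1\|_{\dot H^{1+s}\cap\dot H^{s-1}}$ are bounded by $\sum_{|\be|\le 2}(\|Z^\be u_0\|_{\dot H^s}+\|Z^\be u_1\|_{\dot H^{s-1}})$ after identifying $\pa^2$-derivatives as components of $Z^2$. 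For the mixed derivatives (an $\Om$ composed with a $\pa^2$) appearing in the decomposition, Lemma \ref{lem6} lets us pass between $\pa^2$ norms and $P$ norms at the level of the data.

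\noindent\textbf{Main obstacle.} The subtle step is the rotational piece $[\Om_{ij},P]$: because $\Om_{ij}$ is weight-zero, the product $\<x\>^{(1/2)+\ep}\cdot\Om_{ij}(r_j)\cdot\pa^\be u$ only gains the bare $\<x\>^{-\rho}$ from the remainder, with no additional help from $\Om$. Handling such products in the negative-order Sobolev norm $\dot H^{s-1}$ via Lemma \ref{leibniz}, while absorbing the top-order $\pa^{|\alpha|+1}u$ contribution into the initial-data sum, is the technical heart of the argument and is exactly what forces $\rho>2$ in the second-order estimate.
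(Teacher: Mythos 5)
Your overall plan---set $v=Z^\alpha u$, use that $v$ solves $(\pt^2+P)v=-[Z^\alpha,P]u$, and estimate the forcing through Lemma \ref{lem1} and Lemma \ref{lem0.2}---is close to what the paper does for the rotational parts, but there is a genuine error in the commutator bookkeeping that makes your second-order argument internally inconsistent.

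\textbf{The decay of the top-order commutator coefficient is wrong.} You claim that $[Z^\alpha,P]u=\sum_{|\be|\le|\alpha|+1}a_{\alpha,\be}\pa^\be u$ with $a_{\alpha,\be}=\CO(\<x\>^{-\rho})$ on the top-order piece. This is correct for a single $\Om$: indeed $[\Om,P]u=\sum_{|\be|\le 2}r_{2-|\be|}\pa^\be u$ with $r_0=\CO(\<x\>^{-\rho})$. But for $\Om^2$ one has
\begin{equation*}
[\Om^2,P]=\Om[\Om,P]+[\Om,P]\Om,\qquad\text{and}\qquad r_0\,\Om\pa^2 u=r_0\,\pa^2\Om u+\text{(lower)}=r_0\,x\cdot\pa^3 u+\text{(lower)},
\end{equation*}
so the $\pa^3$-coefficient is $r_0\cdot x=\CO(\<x\>^{1-\rho})$, not $\CO(\<x\>^{-\rho})$; in the paper's notation this is $r_{-1}$. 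Each $\Om$ carries weight one, and applying two of them across a differential operator leaves one uncancelled factor of $\<x\>$. With your claimed decay, the Leibniz step $\|\<x\>^{1/2+\ep}\,\CO(\<x\>^{-\rho})\,\pa^3 u\|_{L^2_t\dot H^{s-1}}\les\|\<x\>^{-1/2-\ep'}\pa^3 u\|_{L^2_t\dot H^{s-1}}$ would go through already for $\rho>1$, contradicting the fact (which you also assert) that $\rho>2$ is what the second-order estimate needs. The corrected coefficient $\CO(\<x\>^{1-\rho})$ is precisely what makes $\rho>2$ necessary, and this is exactly the accounting the paper does in its treatment of $[P,\Om^2]u=\sum_{|\al|\le 3}r_{2-|\al|}\pa_x^\al u$ with $r_{-1}$ on the $\pa^3$ piece. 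As written, your argument does not actually prove the claim.

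\textbf{Secondary differences from the paper.} For $Z^\alpha=\pa_x^k$ the paper does not use a commutator at all: it converts $\pa$ to $\widetilde\pa=\pa g^{-1}$, invokes Lemma \ref{lem7} to trade $\widetilde\pa^k$ for powers $P^j$ and $\widetilde\pa P^j$ (this is where $s<1$ enters, since Lemma \ref{lem7} requires $\mu=1/2+s+\ep\le 3/2$), and then uses that $P^{j/2}$ commutes exactly with $\pt^2+P$. Your commutator route for $\pa_x$ may also succeed, but you would still have to close the estimate, and the paper's route is cleaner because it avoids re-estimating a $\pa^3 u$ forcing. Also, the paper does not dump all commutator terms into $\dot H^{s-1}$: the lower-order pieces go into a weighted $L^2_{t,x}$ norm via the duality bound \eqref{duality}, and only the top-order term $r_0\pa_x^2u$ goes into $\dot H^{s-1}$, where it is handled by a Hardy-plus-KSS computation rather than by Leibniz alone. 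These are choices of convenience, but your version should be checked to make sure the Leibniz step is legitimate in every slot.
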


\begin{proof}
We first prove the estimate for $Z^\al=\pa_x$. Recall that for all
$-3/2\le\widetilde{\mu} < \mu \leq 3/2$, we have (Lemma 4.1 of
\cite{BoHa}) \beeq\label{a}\big\Vert \<x\>^{-\mu}
\widetilde{\partial} u \big\Vert_{L^2_x} \lesssim \big\Vert \<x\>^{-
\widetilde{\mu}} {P^{1/2}} u \big\Vert_{L^2_x}\ .\eneq
Also recall that $\tilde\pa=\pa g^{-1}$, a direct calculation induces
$$\sum_{|\al|\le 1} \|\pa_x^{\al} u\|_{L^2_t  Y_{s,\ep}}\les
\sum_{|\al|\le 1}
\|\tilde{\pa}_x^{\al} u\|_{L^2_t  Y_{s,\ep}},$$
Then for any
$\ep>0$, by \eqref{6.7},
\begin{eqnarray*}
\sum_{|\al|\le 1} \|\pa_x^{\al} u\|_{L^2_t  Y_{s,\ep}}&\les&
\sum_{|\al|\le 1}
\|\tilde{\pa}_x^{\al} u\|_{L^2_t  Y_{s,\ep}}\\
&\les& \sum_{j\le 1}\|P^{j/2} u\|_{L^2_t  Y_{s,\ep/2}}\\
&\les& \sum_{j\le 1}\left(\Vert P^{j/2} u_0\Vert_{\dot{H}^{s}}+\Vert
P^{j/2} u_1\Vert_{\dot H^{s-1}}\right)\\
&\les& \sum_{|\al|\le 1}\Vert \tilde\pa^\al
u_0\Vert_{\dot{H}^{s}}+\Vert u_1\Vert_{\dot H^{s-1}}+\Vert
P^{(1+s-1)/2} u_1\Vert_{L^2_x}
\\
&\les& \sum_{|\al|\le 1}\left(\Vert \pa_x^\al
u_0\Vert_{\dot{H}^{s}}+\Vert \pa_x^\al u_1\Vert_{\dot
H^{s-1}}\right),
\end{eqnarray*}
where we have used the inequalities \eqref{d}, \eqref{e} and Lemma \ref{leibniz} in the
last two inequalities (note $s\in [0,1]$).

Next we check with $Z^\al=\Om$. Recall that by the interpolation of \eqref{est1} and the duality of \eqref{est1}, we have
\beeq \label{duality}\|u\|_{L^2_tY_{s,\ep}}\leq
\|F\|_{L^2_tY_{1-s,\ep}'},\eneq
 if $u$ is a solution of
\eqref{LW} with vanishing initial data. Since $[P,\Omega]u=\sum_{|\al|\leq
2}r_{2-|\al|}\pa_x^\al u$, by using a combination of (6.7) in
\cite{SW} and
Lemma \ref{lem1} for $\Omega u$, we have
\begin{eqnarray}
 \|\Omega u\|_{L^2_t
Y_{s,\ep}}&\lesssim& \|\Omega u_0\|_{\dot{H}^s}+ \|\Omega
u_1\|_{\dot{H}^{s-1}}\nonumber\\
&&+\sum_{|\al|\leq 1}\|\<x\>^{3/2-s+\ep}r_{2-|\al|}\pa_x^\al
u\|_{L^2_{t,x}}+\|r_0\<x\>^{1/2+\ep}\pa_x^2u\|_{L_t^2\dot
H^{s-1}} \label{lem2est1}
\end{eqnarray}
 Now since $\rho>1$, by \eqref{a} and Lemma \ref{lem1},
\begin{eqnarray}
 \sum_{|\al|\leq 1}\|\<x\>^{3/2-s+\ep}r_{2-|\al|}\pa_x^\al u\|_{L^2_{t,x}}
 &\lesssim&
 \sum_{|\al|\leq 1}\|\<x\>^{-1/2-s-\ep'} \pa_x^\al u\|_{L^2_{t,x}}\nonumber\\
&\lesssim&
 \sum_{|\al|\leq 1}\|\<x\>^{-1/2-s-\ep'} \tilde\pa_x^\al u\|_{L^2_{t,x}}\nonumber\\
&\lesssim&
 \sum_{i\leq 1}\|\<x\>^{-1/2-s-{\ep'}/2} P^{i/2} u\|_{L^2_{t,x}}\nonumber\\
& \lesssim& \sum_{i\leq 1}\left(\|P^{i/2} u_0\|_{\dot
H^s}+\|P^{i/2} u_1\|_{\dot H^{s-1}}\right)\nonumber\\
& \lesssim& \sum_{|\al|\leq 1}\|\pa_x^\al u_0\|_{\dot
H^s}+\|\pa_x^\al u_1\|_{\dot H^{s-1}} \label{lem2est2}
\end{eqnarray}
where in the last inequality we have used the inequalities
\eqref{d}, \eqref{e} and Lemma \ref{leibniz}.

 Let $f(x)=r_0\<x\>^{1/2+\ep}=O(\<x\>^{-\rho+1/2+\ep})$. Then
 $f'(x)=O(\<x\>^{-\rho-1/2+\ep})$. Since $n\geq 3$,
 by Hardy's inequality with duality, the KSS estimates
\eqref{KSS} with Remark \ref{rem}, and interpolation,
 \begin{eqnarray}
 \|f\pa_x^2 u\|_{L_t^2\dot H^{s-1}} &\leq& \|\pa_x(f\pa_x u)\|_{L_t^2\dot H^{s-1}}+\|f'\pa_xu\|_{L_t^2\dot H^{s-1}}\nonumber\\
 &\lesssim& \|f\pa_xu\|_{L_t^2\dot H^{s}}+\|\left<x\right>f'\pa_xu\|_{L_t^2\dot H^{s}}\nonumber\\
 &\lesssim& \sum_{|\al|\leq 1}\|\pa_x^\al u_0\|_{\dot H^s}+\sum_{|\al|\leq 1}\|\pa_x^\al u_1\|_{\dot H^{s-1}}\ . \label{lem2est3}
 \end{eqnarray}
 On the basis of \eqref{lem2est1}, \eqref{lem2est2} and \eqref{lem2est3}, we are done with $Z^\al=\Om$.
 This completes the proof of the first order estimates under the
 condition $\rho>1$.

For the second order part, we first consider the case $Z^\al=\pa_x^2$. Since  $s\in [0,1)$, we can always find $\ep>0$ such that $1/2+s+\ep\le  3/2$. By Lemma \ref{lem7}, the proof for $Z^\al=\pa_x$, Lemma \ref{lem0} and Lemma \ref{lem6}, we have
\begin{eqnarray*}
\|\pa_x^2 u\|_{L^2_t  Y_{s,\ep}}
&\les&
\sum_{|\al|\leq 2}\|\tilde\pa_x^\al u\|_{L^2_t  Y_{s, \ep}}\\
&\les& \sum_{|\al|\le 1}\|\tilde\pa^\al u\|_{L^2_t  Y_{s,\ep}}+\|Pu\|_{L^2_t  Y_{s,\ep}}\\
&\les& \sum_{|\al|\le 1}\left(\Vert \pa_x^\al
u_0\Vert_{\dot{H}^{s}}+\Vert \pa_x^\al u_1\Vert_{\dot
H^{s-1}}\right)+\|Pu_0\|_{\dot H^s}+\|Pu_1\|_{\dot H^{s-1}}\\
&\approx& \sum_{|\al|\le 1}\left(\Vert \pa_x^\al
u_0\Vert_{\dot{H}^{s}}+\Vert \pa_x^\al u_1\Vert_{\dot
H^{s-1}}\right)+\|Pu_0\|_{\dot H^s}+\|P^{1/2}u_1\|_{\dot H^s}\\
&\les& \sum_{|\al|\le 1}\left(\Vert \pa_x^\al
u_0\Vert_{\dot{H}^{s}}+\Vert \pa_x^\al u_1\Vert_{\dot
H^{s-1}}\right)+\sum_{|\al|\leq 2}\|\pa_x^\al u_0\|_{\dot H^s}
+\sum_{|\al|\leq 1}\|\tilde\pa^\al u_1\|_{\dot H^{s}}\\
&\les& \sum_{|\al|\leq 2}\left(\|\pa_x^\al u_0\|_{\dot
H^s}+\|\pa_x^\al u_1\|_{\dot H^{s-1}}\right)\ ,
\end{eqnarray*}
where the fractional Lebniz rule (Lemma \ref{leibniz}) is used in the last inequality. Next, we consider the case $Z^\al=\Om^2$. Since
$[P,\Om^2]u=\sum_{|\al|\leq 3}\left( r_{2-|\al|}\pa_x^\al
u\right)$, and $\Om^2 u$ solves the wave equation with initial
data $(\Om^2 u_0, \Om^2 u_1)$ and nonlinear term $[P,\Om^2]u$, by
\eqref{duality}, Lemma \ref{lem1}, Lemma \ref{lem0.2} and the
higher order estimates we have proved,
\begin{eqnarray*}
\|\Om^2 u\|_{L^2_t Y_{{s,\ep}}}&\les& \|\Om^2u_0\|_{\dot H^s}+\|\Om^2 u_1\|_{\dot H^{s-1}}\\
&&+\sum_{|\al|\leq 2}\|\<x\>^{3/2-s+\ep}r_{2-|\al|}\pa_x^\al u\|_{L^2_{t,x}}+\sum_{|\al|= 3}\|\<x\>^{1/2+\ep}r_{2-|\al|}\pa_x^\al u\|_{L^2_{t}\dot H^{s-1}}\\
&\les& \|\Om^2u_0\|_{\dot H^s}+\|\Om^2 u_1\|_{\dot H^{s-1}}\\
&&+\sum_{|\al|\leq 2}\|\pa_x^\al u\|_{L^2_t Y_{s,\ep}}+\sum_{|\al|= 3}\|\<x\>^{1/2+\ep}r_{-1}\pa_x^\al u\|_{L^2_{t}\dot H^{s-1}}\\
&\les&\sum_{|\al|\leq 2}\left(\|Z^\al u_0\|_{\dot H^s}+\|Z^\al u_1\|_{\dot H^{s-1}}\right)+
\sum_{|\al|= 3}\|\<x\>^{1/2+\ep}r_{-1}\pa_x^\al u\|_{L^2_{t}\dot H^{s-1}}\\
&\les&\sum_{|\al|\leq 2}\left (\|Z^\al u_0\|_{\dot H^s}+\|Z^\al u_1\|_{\dot H^{s-1}}\right )
+\|\<x\>^{1+2\ep}r_{-1}\|_{L^\infty\cap\dot W^{1,n}}\|\<x\>^{-1/2-\ep}\pa_x^3 u\|_{L^2_{t}\dot H^{s-1}}\\
&\les&\sum_{|\al|\leq 2}\left (\|Z^\al u_0\|_{\dot H^s}+\|Z^\al u_1\|_{\dot H^{s-1}}\right )+
\sum_{|\al|\leq 2}\left (\|\pa_x^\al u_0\|_{\dot H^s}+\|\pa_x^\al u_1\|_{\dot H^{s-1}}\right )\\
&\les&\sum_{|\al|\leq 2}\left (\|Z^\al u_0\|_{\dot H^s}+\|Z^\al u_1\|_{\dot H^{s-1}}\right )
\end{eqnarray*}
where we have used the fact that $\rho>2$.

Since the commutator term $[P,\pa\Om]u=[P,\Om\pa]u=\sum_{|\al|\leq
3}\left (r_{3-|\al|}\pa_x^\al u\right)$ corresponds to an even
better case than what for $\Om^2$, the proof proceeds in the same
way. This completes the proof of the higher order estimates under
the conditions $\rho>2$ and $s\in [0,1)$.
\end{proof}

\begin{prop}[Higher order energy estimates]
\label{lem3} Let $n\geq 3$, $s\in [0, 1]$ and $\rho>2$. Then for
the solution $u$ of the equation \eqref{LW} with $F=0$, we have
 \beeq\label{lem3est} \sum_{|\al|\le 2}\|Z^\al u(t,x)
\|_{L^\infty_t
 \dot H^s}\les
 \sum_{|\al|\le 2}\left(\|
Z^\al  u_0\|_{\dot H^{s}}+\| Z^\al
 u_1\|_{\dot H^{s-1}}\right).\eneq
Moreover, if we assume only $\rho>1$, the estimate still holds with $|\al|\le 1$.
\end{prop}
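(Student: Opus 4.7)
The plan is to reduce every higher-order energy estimate to the order-zero bound \eqref{6.7}, by exploiting two facts: (i) for any $j$, the function $P^{j/2}u$ solves the same homogeneous wave equation $(\pt^2+P)v=0$, so \eqref{6.7} applies to it directly; and (ii) the rotations $\Om$ do not commute with $P$, but the commutator source can be absorbed via the inhomogeneous estimate Lemma \ref{lem0.1} combined with the weighted Morawetz bounds we already have (Proposition \ref{lem2}).

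The case $|\al|=0$ is \eqref{6.7}. For $Z^\al=\pa_x$, apply \eqref{6.7} to $P^{1/2}u$:
\begin{equation*}
\|P^{1/2}u\|_{L^\infty_t \dot H^s}\les \|P^{1/2}u_0\|_{\dot H^s}+\|P^{1/2}u_1\|_{\dot H^{s-1}},
\end{equation*}
then pass to $\tilde\pa u$ via Lemma \ref{lem0}, and re-express the data side as ordinary $\pa_x u_0,\pa_x u_1$ norms by \eqref{e} together with the fractional Leibniz rule (Lemma \ref{leibniz}). For $Z^\al=\Om$, decompose $\Om u=v+w$, where $v$ carries the data $(\Om u_0,\Om u_1)$ with zero source and $w$ has zero data and source $[P,\Om]u=\sum_{|\al|\le 2}r_{2-|\al|}\pa_x^\al u$. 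Then $\|v\|_{L^\infty_t \dot H^s}$ is bounded by \eqref{6.7}, while Lemma \ref{lem0.1} gives
\begin{equation*}
\|w\|_{L^\infty_t \dot H^s}\les \|\<x\>^{1/2+\ep}[P,\Om]u\|_{L^2_t \dot H^{s-1}},
\end{equation*}
whose right-hand side is precisely the commutator quantity already estimated in the proof of Proposition \ref{lem2} (see \eqref{lem2est2}--\eqref{lem2est3}), using Hardy's inequality, the KSS bound \eqref{KSS}, and the weighted relation \eqref{a}. This disposes of the $|\al|\le 1$ case under only $\rho>1$.

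For the second-order part, start with $Z^\al=\pa_x^2$. By Lemma \ref{lem6},
\begin{equation*}
\|\pa_x^2 u\|_{L^\infty_t \dot H^s}\les \|Pu\|_{L^\infty_t \dot H^s}+\|u\|_{L^\infty_t \dot H^s},
\end{equation*}
and since $Pu$ again solves the homogeneous wave equation, applying \eqref{6.7} to $Pu$ followed by \eqref{2ndorderest2} (and Leibniz for the $\|Pu_1\|_{\dot H^{s-1}}$ piece) yields the desired bound. For $Z^\al=\Om^2$, the decomposition strategy of the first-order case is repeated with source $[P,\Om^2]u=\sum_{|\al|\le 3}r_{2-|\al|}\pa_x^\al u$; contributions with $|\al|\le 2$ are controlled by Proposition \ref{lem2} just proved. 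The critical piece is the $|\al|=3$ term $r_{-1}\pa_x^3 u$: since $r_{-1}=\CO(\<x\>^{-\rho+1})$ and $\rho>2$, the weight $\<x\>^{1/2+\ep}r_{-1}$ lies in $L^\infty\cap \dot W^{1,n}$ for small $\ep$, so Lemma \ref{leibniz} combined with Lemma \ref{lem0.2} gives
\begin{equation*}
\|\<x\>^{1/2+\ep}r_{-1}\pa_x^3 u\|_{L^2_t \dot H^{s-1}}\les \|\<x\>^{-1/2-\ep'}\pa_x^3 u\|_{L^2_t \dot H^{s-1}}\les \sum_{|\al|\le 2}\left(\|\pa_x^\al u_0\|_{\dot H^s}+\|\pa_x^\al u_1\|_{\dot H^{s-1}}\right).
\end{equation*}
The mixed case $\pa_x\Om$ is strictly easier, since $[P,\pa_x\Om]=\sum_{|\al|\le 3}r_{3-|\al|}\pa_x^\al u$ carries an extra decaying weight. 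The main obstacle is this last third-derivative bound, which is exactly what dictates the hypothesis $\rho>2$; everything else is weight bookkeeping and interpolation between the $s=0$ and $s=1$ endpoints.
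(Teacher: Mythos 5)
Your overall strategy matches the paper's: apply the order-zero bound \eqref{6.7} to $P^{1/2}u$ and $Pu$ (which solve the same homogeneous equation), relate these back to $\pa_x u$ and $\pa_x^2 u$ via Lemma \ref{lem0} and Lemma \ref{lem6}, and handle the rotations by Duhamel, with the inhomogeneous piece controlled through Lemma \ref{lem0.1} and the commutator absorbed by Lemma \ref{lem0.2} and the fractional Leibniz rule. The second-order discussion, including the role of $\rho>2$ for the $r_{-1}\pa_x^3 u$ term, is correct.

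There is, however, a genuine gap in your first-order $\pa_x$ step. You assert that one can "pass to $\tilde\pa u$ via Lemma \ref{lem0}," i.e.\ deduce $\|\tilde\pa u\|_{\dot H^s}\les\|P^{1/2}u\|_{\dot H^s}$ for $s\in(0,1]$. But Lemma \ref{lem0} gives this only at nonpositive regularity (\eqref{f} is a statement about $\dot H^{-s}$, $s\in[0,1]$), while at positive regularity \eqref{e} goes the \emph{opposite} direction, bounding $\|P^{1/2}u\|_{\dot H^s}$ by $\|\tilde\pa u\|_{\dot H^s}$. The paper instead proves
\begin{equation*}
\|\pa_x u\|_{\dot H^s}\les\|P^{1/2}u\|_{\dot H^s}+\|P^{1/2}u\|_{\dot H^{-s}}\les\|P^{1/2}u\|_{\dot H^s}+\|u\|_{\dot H^{1-s}}
\end{equation*}
by combining the $s=1$ case (elliptic regularity for $P$: $\|\pa_x u\|_{\dot H^1}\les\|\pa^2_x u\|_{L^2}\les\|Pu\|_{L^2}+\|u\|_{L^2}$) with the $s=0$ case \eqref{d} via interpolation. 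The extra lower-order term $\|u\|_{\dot H^{1-s}}$ forces the argument to work first only for $s\in[0,1/2]$ (so that $\dot H^s\cap\dot H^{1+s}\subset\dot H^{1-s}$), and the full range $s\in[0,1]$ for $|\al|\le 1$ is only recovered afterwards, using the second-order inequality \eqref{lem3est4}. Your proposal, as written, would have to reproduce this elliptic-regularity-plus-interpolation step and the subsequent bootstrap in $s$; without it, the $\pa_x$ case does not close for $s>1/2$ from Lemma \ref{lem0} alone.
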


\begin{proof}
By Lemma \ref{lem0} and elliptic regularity for $P$,
we know
$$\|\pa_x u\|_{\dot H^1}\les \|\pa^2_x u\|_{L^2}\les \|Pu\|_{L^2}+\|u\|_{L^2}\les \|P^{1/2} u\|_{\dot H^1}+\|P^{1/2}u\|_{\dot H^{-1}}.$$
Interpolating this estimate with \eqref{d} with $s=1$, $\|\pa_x
u\|_{L^2}\simeq \|P^{1/2}u\|_{L^2}$, we get that for $s\in [0,1]$,
\begin{eqnarray}
\|\pa_x u\|_{\dot H^s}&\les& \|P^{1/2} u\|_{\dot H^s}+\|P^{1/2}u\|_{\dot H^{-s}}\\
&\les& \|P^{1/2} u\|_{\dot H^s}+\|u\|_{\dot H^{1-s}}\nonumber.
\end{eqnarray}
Thus by Lemma \ref{lem0} we have for $s\in [0, 1/2]$ (such that
$s\le 1-s$ and $\dot H^s\cap \dot H^{1+s}\subset \dot H^{1-s}$),
\begin{eqnarray}
\sum_{|\al|\le 1}\|{\pa}^\al_x u\|_{L_t^\infty \dot H^s} &\les&
\sum_{j\le 1}\|P^{j/2} u\|_{L_t^\infty \dot H^s}+\|u\|_{L_t^\infty\dot H^{1-s}}\nonumber\\
&\les &\sum_{|\al|\le 1}\left(\| {\pa}_x^\al u_0\|_{\dot H^{s}}+\|
\pa_x^\al u_1\|_{\dot H^{s-1}}\right)+\|
u_0\|_{\dot H^{1-s}}+\| u_1\|_{\dot H^{-s}}\nonumber\\
&\les &\sum_{|\al|\le 1}\left(\| {\pa}_x^\al u_0\|_{\dot H^{s}}+\|
\pa_x^\al u_1\|_{\dot H^{s-1}}\right)\ .\label{lem3est2}
\end{eqnarray}
Now we can deal with $\Om u$. Noticing that
$$
\tilde{\Om}_{ij}f=g^{-1}\Om_{ij} f+(x_i\pa_jg^{-1}-x_j\pa_ig^{-1})f\
,$$ by the fractional Leibniz rule, we have
$$\|\tilde{\Om} f\|_{\dot
H^s}\les\sum_{|\al|\leq 1}\|\Om^\al f\|_{\dot H^s}\ ,\ |s|<n/2\ .$$
We have similar relationship between $\pa_x u$ and $\tilde\pa_x u$.
By the Sobolev embedding, for any $h\in L^n$, we have
\begin{eqnarray}
\|\<x\>^{-1/2-\ep} h u\|_{\dot H^{s-1}}&\les&
\|\<x\>^{-1/2-\ep}h u\|_{L^{2n/(n+2(1-s))}}\nonumber\\
&\les&\|h\|_{L^n}\|\<x\>^{-1/2-\ep}u\|_{L^{{2n}/({n-2s})}}\nonumber\\
&\les&\|\<x\>^{-1/2-\ep}u\|_{\dot H^s}. \label{useH1}
\end{eqnarray}
Thus by the energy estimate, Lemma \ref{lem0.1}, \ref{leibniz}
and \ref{lem0.2}:
\begin{eqnarray*}
\|\tilde{\Om} u\|_{L_t^\infty\dot H^s}
&\les&
\|\tilde{\Om} u_0\|_{\dot H^s}+\|\tilde{\Om} u_1\|_{\dot
H^{s-1}}+\|\<x\>^{1/2+\ep}[P,\tilde{\Om}]u\|_{L^2_t\dot H^{s-1}}\nonumber\\
&\les& \|\tilde{\Om} u_0\|_{\dot H^s}+\|\tilde{\Om} u_1\|_{\dot
H^{s-1}}+\sum_{1\leq|\al|\leq
2}\|r_{2-|\al|}\<x\>^{1/2+\ep}\tilde{\pa}_x^\al u\|_{L^2_t\dot
H^{s-1}}\\
&\les&\sum_{|\al|\leq 1}\left (\|\Om^\al u_0\|_{\dot H^s}+\|\Om^\al
u_1\|_{\dot H^{s-1}}\right )\\
&&+\sum_{1\leq|\al|\leq
2}\|r_{2-|\al|}\<x\>^{1+2\ep}\|_{L^\infty\cap \dot
H^{1-s,n/(1-s)}}\|\<x\>^{-1/2-\ep}\tilde{\pa}_x^\al
u\|_{L^2_t\dot H^{s-1}}\nonumber\\
&\les&\sum_{|\al|\leq 1}\left(\|\Om^\al u_0\|_{\dot H^s}+\|\Om^\al
u_1\|_{\dot H^{s-1}}\right)+\sum_{1\leq|\al|\leq
2}\|\<x\>^{-1/2-\ep}{\pa}_x^\al
u\|_{L^2_t\dot H^{s-1}}\nonumber\\
&&+\|\<x\>^{-1/2-\ep}(\pa g^{-1})u\|_{L^2_t \dot H^{s-1}}+
\|\<x\>^{-1/2-\ep}[\pa(g^{-1}\pa g^{-1})] u\|_{L^2_t \dot H^{s-1}}
\nonumber
\\
&\les&\sum_{|\al|\leq 1}\left (\|Z^\al u_0\|_{\dot
H^s}+\|Z^\al u_1\|_{\dot H^{s-1}}\right )\ ,
\end{eqnarray*}
where we have used the fact that $\rho>1$ and \eqref{useH1} with $h=\pa g^{-1}$, $\pa g^{-2}$ and $h=\pa(g^{-1}\pa g^{-1})$ (the condition $h\in L^n$ is satisfied since the condition \eqref{H1} on the metric $g$). Noticing that $\Om u=g\tilde\Om u-g(\Om g^{-1})u$, we hence have
\beeq\label{lem3est3}\|{\Om} u\|_{L_t^\infty\dot H^s}\les
\sum_{|\al|\leq1}\|\tilde{\Om}^\al u\|_{L_t^\infty\dot H^s} \les
\sum_{|\al|\leq 1}\left(\|Z^\al u_0\|_{\dot H^s}+\|Z^\al
u_1\|_{\dot H^{s-1}}\right)\ .\eneq On the basis of \eqref{lem3est2} and
\eqref{lem3est3}, we complete the proof of the energy estimates of
order one, under the conditions $s\in [0, 1/2]$ and $\rho>1$.

For the part with second order derivatives, we need only to deal with
$\pa_x^2$ and $\Om^2$ as before.

By Lemma \ref{lem6} and Lemma \ref{lem0}, we have
\begin{eqnarray}
\|{\pa}^2_x u\|_{L_t^\infty \dot H^s} &\les&
\|Pu\|_{L^\infty_t\dot H^s}+ \| u\|_{L_t^\infty \dot H^s}\nonumber\\
&\les &\| Pu_0\|_{\dot H^{s}}+\| P u_1\|_{\dot H^{s-1}}+\|
u_0\|_{\dot H^{s}}+\|  u_1\|_{\dot
H^{s-1}} \nonumber\\
&\les &\sum_{|\al|\le 2}\left(\| \pa_x^\al u_0\|_{\dot H^{s}}+\|
\pa_x^\al u_1\|_{\dot H^{s-1}}\right)\ .\label{lem3est4}
\end{eqnarray}
Here we remark that we can control $\sum_{|\al|= 1}\|{\pa}^\al_x
u\|_{L_t^\infty \dot H^s}$ for $s\in [0, 1]$ instead of the
restriction $s \in [0,1/2]$ in \eqref{lem3est2}, by \eqref{lem3est4}
and \eqref{6.7}, which enables us to relax the condition to $s\in [0,1]$ in the estimates of order one.

By Lemma \ref{lem0.1}, Lemma \ref{lem0.2}, 
and what we have gained in previous steps, 
if $\rho>2$,
\begin{eqnarray}
\|\Om^2 u\|_{L_t^\infty\dot H^s}
&\les&
\sum_{|\al|\le 2}\|\tilde{\Om}^\al u\|_{L_t^\infty \dot H^s}\nonumber\\
&\les& \sum_{|\al|\le 2}\left (\|\tilde{\Om}^2 u_0\|_{\dot H^s}+\|\tilde{\Om}^2 u_1\|_{\dot
H^{s-1}}\right )+\sum_{1\leq |\al|\leq
3}\|r_{2-|\al|}\<x\>^{1/2+\ep}\tilde \pa_x^\al u\|_{L^2_t\dot
H^{s-1}}\nonumber\\
&\les& \sum_{|\al|\le 2}\left (\|\tilde{\Om}^2 u_0\|_{\dot
H^s}+\|\tilde{\Om}^2 u_1\|_{\dot H^{s-1}}\right )+\sum_{1\leq
|\al|\leq 3}\|\<x\>^{-1/2-\ep} \pa_x^\al u\|_{L^2_t\dot
H^{s-1}}\nonumber\\
&\les&\sum_{|\al|\leq 2}\left(\|Z^\al u_0\|_{\dot H^s}+\|Z^\al
u_1\|_{\dot H^{s-1}}\right)\ .\label{lem3est5}
\end{eqnarray}
We are done with the second order estimates based on
\eqref{lem3est4} and \eqref{lem3est5}.
\end{proof}

\begin{prop}[Sobolev inequality with angular smoothing]
\label{lem4} Let $u$ be a solution of \eqref{LW} with $F=0$ and
$n\geq 3$. Then for any $s\in(1/2,1]$ and $\rho>1$, there exists a suitable $\eta>0$ so that
we have: \beeq\sum_{|\al|\le 1}\||x|^{{n}/{2}-s} Z^\al  u(t,x)
\|_{L^\infty_{t, |x|} L^{2+\eta}_\omega}\les \sum_{|\al|\le
1}\left(\| Z^\al u_0\|_{\dot H^{s}}+\| Z^\al u_1\|_{\dot
H^{s-1}}\right)\eneq Furthermore, if we assume $\rho>2$, then we
have \beeq\sum_{|\al|\le 2}\||x|^{{n}/{2}-s} Z^\al u(t,x)
\|_{L^\infty_{t, |x|} L^{2+\eta}_\omega}\les \sum_{|\al|\le
2}\left(\| Z^\al u_0\|_{\dot H^{s}}+\| Z^\al u_1\|_{\dot
H^{s-1}}\right)\eneq
\end{prop}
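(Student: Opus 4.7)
The plan is to derive this proposition as an essentially immediate corollary of the higher-order energy estimates in Proposition \ref{lem3}, combined with the Fang--Wang Sobolev inequality with angular regularity \eqref{trace-g}. The key observation is that since $s\in(1/2,1]$ and $n\geq 3$ we have $s\in(1/2,n/2)$, so \eqref{trace-g} is applicable. Fixing $t$ and setting $f(x)=Z^\alpha u(t,x)$ yields
\begin{equation*}
\bigl\| |x|^{n/2-s}\, Z^\alpha u(t,\cdot) \bigr\|_{L^\infty_{|x|} L^{2+\eta}_\omega} \lesssim \| Z^\alpha u(t,\cdot) \|_{\dot H^s}
\end{equation*}
for some suitable $\eta>0$ depending only on $s$ and $n$. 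Taking the $L^\infty_t$ norm on both sides then produces the mixed-norm inequality
\begin{equation*}
\bigl\| |x|^{n/2-s}\, Z^\alpha u \bigr\|_{L^\infty_{t,|x|} L^{2+\eta}_\omega} \lesssim \| Z^\alpha u \|_{L^\infty_t \dot H^s}.
\end{equation*}

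To close the estimate I would invoke Proposition \ref{lem3} to control the right-hand side by the desired sum of $\dot H^s$ and $\dot H^{s-1}$ norms of $Z^\beta u_0$ and $Z^\beta u_1$. This is precisely where the hypothesis on $\rho$ enters: the bound $\rho>1$ suffices for the first-order part $|\alpha|\leq 1$, while $\rho>2$ is needed to absorb the more delicate commutators $[P,\Omega^2]$ and $[P,\Omega\partial]$ that arise in the second-order energy bound of Proposition \ref{lem3}. Summing the resulting pointwise inequalities over the appropriate range of $|\alpha|$ then yields the two conclusions of the proposition. I do not anticipate a substantive new obstacle: all of the delicate commutator analysis underlying the $L^\infty_t \dot H^s$ control of $Z^\alpha u$ has already been carried out in the proof of Proposition \ref{lem3}, and the present statement is essentially a trace-type reformulation of those energy bounds via the angular Sobolev embedding.
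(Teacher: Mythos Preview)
Your proposal is correct and matches the paper's own argument exactly: the paper states in one line that the proposition is a direct consequence of the energy estimates in Proposition~\ref{lem3} together with the Fang--Wang inequality \eqref{trace-g}. There is nothing to add.
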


\begin{proof}
This is a direct consequence of the energy estimates Proposition
\ref{lem3} and the inequality \eqref{trace-g}.
\end{proof}

\begin{prop}[Local energy estimates]
\label{lem5} Assume $n\geq 3$, let $s\in [0,1]$, $p\ge 2$,
$k=0,1,2$, $\rho>k$ and $u$ be a solution of \eqref{LW} with $F=0$.
We have \beeq\label{lem5est}\sum_{|\al|\le k}\|\phi Z^\al
u\|_{L^p_t \dot{H}^s} \les \sum_{|\al|\le k }\left(\Vert Z^\al
u_0\Vert_{\dot{H}^{s}}+\Vert Z^\al u_1\Vert_{\dot H^{s-1}}\right)\
,\eneq where $\phi\in C_0^\infty(\R^n)$.
\end{prop}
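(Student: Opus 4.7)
The idea is to mimic the order-zero argument leading to \eqref{6.8}, applied with $Z^\al u$ in place of $u$. For $p\ge 2$, H\"older's inequality in time gives
\begin{equation*}
\|\phi Z^\al u\|_{L^p_t \dot H^s}\les \|\phi Z^\al u\|_{L^2_t \dot H^s}+\|\phi Z^\al u\|_{L^\infty_t \dot H^s},
\end{equation*}
so after summing over $|\al|\le k$ it suffices to bound each piece separately by the right-hand side of \eqref{lem5est}.

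The $L^\infty_t$ piece is immediate from Proposition \ref{lem3} once we verify that multiplication by the fixed function $\phi\in C_0^\infty$ is bounded on $\dot H^s$ for $s\in[0,1]$. This boundedness follows from the fractional Leibniz rule of Lemma \ref{leibniz}, since $\phi Z^\al u$ is automatically in $L^2_x$ thanks to the compact support of $\phi$, which removes the low-frequency issues of the homogeneous Sobolev space.

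For the $L^2_t \dot H^s$ piece we interpolate between the two endpoints $s=0$ and $s=1$. At $s=0$, using $|\phi(x)|\les \<x\>^{-1/2-\ep}$ for any $\ep>0$, we get $\|\phi Z^\al u\|_{L^2_{t,x}}\les \|Z^\al u\|_{L^2_tY_{0,\ep}}$, and Proposition \ref{lem2} bounds the latter by $\sum_{|\al|\le k}(\|Z^\al u_0\|_{L^2}+\|Z^\al u_1\|_{\dot H^{-1}})$. At $s=1$, the product rule gives
\begin{equation*}
\|\phi Z^\al u\|_{L^2_t\dot H^1}\les \|(\pa\phi)\,Z^\al u\|_{L^2_{t,x}}+\|\phi\,\pa Z^\al u\|_{L^2_{t,x}};
\end{equation*}
the first term is handled by the $s=0$ argument with $\pa\phi\in C_0^\infty$ in place of $\phi$, while the second is controlled by the KSS estimate of Lemma \ref{lem00} together with Remark \ref{rem}, after commuting $\pa$ past $Z^\al$ (writing $\pa Z^\al=Z^\al\pa+[\pa,Z^\al]$, with $[\pa,Z^\al]$ a linear combination of $Z^\beta$'s of length $\le|\al|$). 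Real interpolation between these two linear bounds on the map $(u_0,u_1)\mapsto \phi Z^\al u$ yields the desired estimate for all $s\in[0,1]$.

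The main obstacle is the $s=1$ endpoint in the case $|\al|=k=2$, where $\phi\,\pa Z^\al u$ contains three vector fields in total; the resulting commutator term $\phi\,[\pa,Z^\al]u$ must be bounded through the same mechanism used in the proofs of Propositions \ref{lem2} and \ref{lem3}, namely by combining the weighted $L^2_t\dot H^{s-1}$ duality bounds of Lemma \ref{lem1} with Lemma \ref{lem0.2} and the $L^\infty\cap \dot H^{1,n}$ bounds on $\<x\>^{1+2\ep}r_{j}$. This is exactly where the hypothesis $\rho>k$ is consumed: it guarantees that the coefficients produced by commutators of $P$ (or of $\pa_x$) with $Z^\al$ decay fast enough to be absorbed by the weight $\<x\>^{-1/2-\ep}$ without loss.
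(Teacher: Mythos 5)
Your overall architecture matches the paper's: split $L^p_t$ via H\"older into $L^2_t$ and $L^\infty_t$, control the $L^\infty_t$ piece with Proposition \ref{lem3}, and interpolate the $L^2_t$ piece between $s=0$ and $s=1$. Two remarks on the differences and on one conceptual slip.

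At the $s=0$ endpoint you route through Proposition \ref{lem2}. The paper instead observes that, because $\phi$ is compactly supported, $\phi\,\Omega = r_0\,\partial_x$ reduces the number of vector fields by one, so the KSS estimate of order $k-1$ already suffices; the resulting data norms $\|Z^\al u_0\|_{\dot H^1}$, $\|Z^\al u_1\|_{L^2}$ with $|\al|\le k-1$ are then traded back to $\sum_{|\al|\le k}(\|Z^\al u_0\|_{L^2}+\|Z^\al u_1\|_{\dot H^{-1}})$. Both routes work; yours is fine, though slightly less self-contained.

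The last paragraph, however, misattributes the role of $\rho>k$. The KSS estimate \eqref{KSS}, together with Remark \ref{rem}, already bounds $\|\langle x\rangle^{-1/2-\ep}\partial_x Z^\al u\|_{L^2_{t,x}}$ directly for all $|\al|\le k$, because $\partial_x Z^\al u$ is part of $(\Gamma^\al u)'$; no commutation of $\partial_x$ past $Z^\al$ is needed at all, and Lemma \ref{lem00} holds under $\rho>1$ uniformly in $N$. Moreover, the commutator $[\partial_x, Z^\al]$ for $Z\in\{\partial_x,\Omega\}$ has constant coefficients (e.g.\ $[\partial_i,\Omega_{jk}]=\delta_{ij}\partial_k-\delta_{ik}\partial_j$) and produces no $r_j$-type terms, so invoking Lemma \ref{lem1}, Lemma \ref{lem0.2}, and the $\langle x\rangle^{1+2\ep}r_j\in L^\infty\cap\dot W^{1,n}$ bounds for it is misplaced — that machinery is for $[P,\Omega^\al]$, not for $[\partial_x,Z^\al]$. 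The hypothesis $\rho>k$ is in fact consumed by the $L^\infty_t$ energy estimates of Proposition \ref{lem3} (and by Proposition \ref{lem2} at $s=0$ in your variant), not by the $L^2_t\dot H^1$ bound, which needs only $\rho>1$.
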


\begin{proof} The estimate with $k=0$ is just \eqref{6.8}.
For the higher order estimates with $|\al|=k\ge 1$, by the higher
order KSS estimates \eqref{KSS},
\begin{eqnarray*}\|\phi  Z^\al
u\|_{L^2_t \dot H^1}&\les&\|\phi \ \pa_x Z^\al u\|_{L^2_{t,x}}+\|\phi'\  Z^\al u\|_{L^2_{t,x}}\\
&\les& \|\<x\>^{-1/2-\ep}\pa_x Z^\al u\|
_{L^2_{t,x}}+\|\<x\>^{-3/2-\ep}Z^\al u\|_{L^2_{t,x}}\\&\les&
\sum_{|\al|\le k} \left( \|Z^\al u_0\|_{\dot H^1}+ \|Z^\al
u_1\|_{L^2}\right).
\end{eqnarray*}
For $s=0$, note that $\phi\ \Omega=r_0 \pa_x$,
\begin{eqnarray*}
\|\phi Z^\al u\|_{L^2_{t,x}}&\les& \|\<x\>^{-1/2-\ep} \pa_x
Z^{\al-1}u\|_{L^2_{t,x}}\\&\les& \sum_{|\al|\le k-1}\left(\|Z^\al
u_0\|_{\dot H^1}+ \| Z^\al u_1\|_{L^2}\right)\\
&\les& \sum_{|\al|\le k}\left(\|Z^\al u_0\|_{L^2}+ \| Z^\al
u_1\|_{\dot H^{-1}}\right)\ .
\end{eqnarray*}
By interpolation between the above two estimates, we get \eqref{lem5est} with $p=2$.
This will complete the proof if we combine it with the energy estimates
in Proposition~\ref{lem3}.
\end{proof}

\noindent\textbf{Proof of Theorem \ref{mainest}}: From the above
four propositions, we have proved the higher order version of
\eqref{6.7}, \eqref{6.3} and \eqref{6.8}, which gives us the
required higher order estimates \eqref{highorderest} and
\eqref{highorderenergyest}.\qed

\section{Local in Time Strichartz Estimates}
In this section, we give the proof of Thoerem \ref{mainest2}. The
first lemma is concerned with the KSS estimates for the perturbed wave equation,
obtained in  Theorem 2.1 of \cite{HiWaYo} (see also Theorem 5.1 in
\cite{MeSo06_01}).
\begin{lem}
\label{KSSlocal} Let $n\ge 3$, $\Box_h=\pt^2-\Delta+h^{\al\be}(t,x)\pa_\al
\pa_\be$, $h^{\al\be}=h^{\be\al}$ and $\sum |h^{\al\be}|\le 1/2$.
Then the solution to the equation $\Box_h u=F$ satisfies
\begin{eqnarray}
&&(1+T)^{-2a}\big\Vert |x|^{-1/2+a} (|u'|+\frac{|u|}{|x|})
\big\Vert^2_{L^2 ( [0, T] \times \R^{n} )}\nonumber \\
&+&
\big\Vert \< x \>^{-1/2-\ep} (|u'|+\frac{|u|}{|x|})
\big\Vert_{L^2 ( [0, T] \times \R^{n} )}^2
\nonumber \\
 &\lesssim&    \Vert u' ( 0 , \cdot ) \Vert^2_{L^2 ( \R^{n} )}+\int_0^T \int (u'+\frac u{|x|})(|F|+(|h'|+\frac h{|x|})|u'|) \,dxdt \label{70-est-KSS-SmallPert}
\end{eqnarray} for any $\ep>0$ and $a\in (0, 1/2)$.\\
\end{lem}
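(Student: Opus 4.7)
The plan is to use the Morawetz multiplier method adapted to the perturbed d'Alembertian $\Box_h$. For the unperturbed wave equation, the classical KSS estimate arises by pairing $\Box u = F$ with a multiplier of the form $M u = f(r)\pa_r u + \tfrac{n-1}{2r} f(r) u$, where $r = |x|$ and $f$ is a bounded, radial, nondecreasing function. Integrating by parts in $t$ and $x$ converts $\int_0^T\!\int (\Box u)(Mu)\,dx\,dt$ into boundary terms at $t=0,T$ (controlled by $\|u'(0,\cdot)\|_{L^2}^2$ and the energy at time $T$), plus a manifestly positive quadratic form in $\pa_r u$, $|\nabla_\om u|/r$, and $u/r$ weighted by $f'(r)$ and $f(r)/r^3$. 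The game is then to pick $f$ whose derivative recovers both weights $|x|^{-1/2+a}$ and $\<x\>^{-1/2-\ep}$ on the left.

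To get both weights simultaneously, I would take $f=f_1+f_2$. The first piece $f_1(r) = r^{2a}/((1+T)^{2a}+r^{2a})$ is bounded by $1$ uniformly in $T$ and satisfies $f_1'(r)\simeq (1+T)^{-2a} r^{2a-1}$ for $r\les 1+T$; this yields the $(1+T)^{-2a}|x|^{-1/2+a}$-weighted term after rearrangement. The second piece $f_2$ is chosen so that $f_2'(r)\simeq \<r\>^{-1-2\ep}$, producing the $\<x\>^{-1/2-\ep}$-weighted term without any $T$-loss. Summing the two multipliers yields both weighted quantities on the left-hand side of \eqref{70-est-KSS-SmallPert}.

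Next, the perturbation $h^{\al\be}\pa_\al\pa_\be u$ contributes an error $\int_0^T\!\int h^{\al\be}(\pa_\al \pa_\be u)(Mu)\,dx\,dt$. After one integration by parts to shift one derivative off the second-order term, this splits into two kinds of contribution: top-order terms of schematic form $h^{\al\be}\cdot f'(r)\cdot (\pa u)^2$, and lower-order terms of the form $(|h'|+|h|/|x|)|u'|^2$ together with $|h|\cdot |u'|\cdot |u|/r^2$. Under the smallness assumption $\sum|h^{\al\be}|\le 1/2$, the top-order error is at most half of the positive principal term on the left and can be absorbed. The remaining lower-order contributions are precisely those retained on the right-hand side of \eqref{70-est-KSS-SmallPert}. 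The forcing $F$ contributes $\int_0^T\!\int F\cdot Mu$, which via $|Mu|\les |u'|+|u|/|x|$ gives the $|F|$-term on the right.

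The main obstacle is the simultaneous recovery of the two weights with the correct $T$-dependence: the $(1+T)^{-2a}$ prefactor is forced by the requirement that $f_1$ remain uniformly bounded on $[0,T]\times\R^n$, and one must tune the truncation level so that no $T$-loss contaminates the $\<x\>^{-1/2-\ep}$ piece arising from $f_2$. A secondary technical point is absorbing the boundary term at $t=T$ produced by the multiplier identity: this requires the standard energy identity for $\Box_h$, where the smallness $\sum|h^{\al\be}|\le 1/2$ ensures $\|u'(T,\cdot)\|_{L^2}^2\les \|u'(0,\cdot)\|_{L^2}^2$ plus the right-hand side of \eqref{70-est-KSS-SmallPert}, closing the estimate.
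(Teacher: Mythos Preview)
The paper does not supply its own proof of this lemma; it simply quotes the result from Theorem 2.1 of \cite{HiWaYo} (with a pointer to Theorem 5.1 of \cite{MeSo06_01}). Your multiplier sketch is exactly the method used in those references: one pairs $\Box_h u$ with a Morawetz-type multiplier $f(r)\pa_r u + \tfrac{n-1}{2}\tfrac{f(r)}{r}u$, chooses $f$ to be a sum of pieces tuned to the two weights (a $T$-dependent piece for the $|x|^{-1/2+a}$ term and a $T$-independent piece for the $\<x\>^{-1/2-\ep}$ term), integrates by parts, and absorbs the top-order $h$-contribution using $\sum|h^{\al\be}|\le 1/2$. So your proposal is correct and aligned with the cited proofs, though the paper itself treats the lemma as a black box.
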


On the basis of the KSS estimates for wave equations with variable
coefficients and local energy decay \eqref{lem5est}, we can adapt
the arguments in \cite{SW} to obtain the following KSS estimates
for asymptotically Euclidean manifolds.

\begin{prop}
\label{Prop3.2}
Assume that \eqref{H1} and \eqref{H2} hold with $\rho > 1$.  Let $N
\ge 0$, $ 0<\mu<1/2$. Then the solution of \eqref{LW} satisfies
\begin{multline}\label{KSS2}
\sum_{\vert \alpha \vert \leq N} (1+T)^{\mu-1/2}
 \big\Vert \<x\>^{-\mu} \left(|(\Gamma^{\alpha} u)'|+  \frac{|\Gamma^{\alpha} u|}{\<x\>}\right)
 \big\Vert_{L^2_T L^2_x} \\
 \lesssim \sum_{\vert \alpha \vert \leq N} \big\Vert (Z^{\alpha}u)'
(0, \cdot ) \big\Vert_{L^2_x}
 + \sum_{\vert \alpha \vert \leq N}\big\Vert
 \Gamma^{\alpha} F(s, \cdot ) \big\Vert_{L^1_T
 L^2_x}\ ,
\end{multline} where $L^q_T L^r_x=L^q([0,T]; L^r(\R^n))$.
\end{prop}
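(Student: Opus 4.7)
The plan is to mimic the proof of the standard KSS bound \eqref{KSS} from \cite{SW} (i.e.\ Lemma~\ref{lem00}), with the sole modification that the perturbed flat-space estimate of Lemma~\ref{KSSlocal} is used in place of the classical Keel--Smith--Sogge inequality. Setting $a = 1/2 - \mu \in (0,1/2)$, the conclusion of \eqref{70-est-KSS-SmallPert} reads $(1+T)^{-2a}\||x|^{-\mu}(|u'|+|u|/|x|)\|_{L^2_{T,x}}^2 \les \cdots$, whose square root matches precisely the prefactor $(1+T)^{\mu-1/2}$ and weight $\<x\>^{-\mu}$ appearing on the left of \eqref{KSS2}.

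For the base case $N=0$ with $\Ga^\al = \mathrm{id}$, I would rewrite $(\pt^2 + P)u = F$ as $\Box_h u = F - (r_1\pa + r_2)u$, where $h^{ij}(x) = \de^{ij} - g^{ij}(x) = \CO(\<x\>^{-\rho})$ by \eqref{H1}. Choose a smooth radial cutoff $\be(x)$ equal to $0$ on $|x| \le R$ and to $1$ on $|x| \ge 2R$, with $R$ large enough that $\sum|h^{ij}(x)| \le 1/2$ on $\{|x|\ge R\}$; on $|x|<R$ replace $h^{ij}$ by a compactly supported modification $\tilde h^{ij}$ satisfying the smallness hypothesis globally, and treat the difference $h^{ij} - \tilde h^{ij}$, which is compactly supported, as an additional forcing term. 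Applying Lemma~\ref{KSSlocal} to $u$ with the modified metric yields \eqref{70-est-KSS-SmallPert}; the compactly supported error terms and the lower-order contributions $(r_1\pa + r_2)u$ are then absorbed using local energy decay \eqref{lem5est} (Proposition~\ref{lem5}). The bilinear term $\int(u'+u/|x|)(|h'|+h/|x|)|u'|\,dx\,dt$ on the right of \eqref{70-est-KSS-SmallPert} is controlled by Cauchy--Schwarz in $x$ against the weight $\<x\>^{-\rho}$: since $\rho > 1 > 2\mu$, the resulting weight dominates $\<x\>^{-2\mu}$ and, after taking $R$ still larger, can be absorbed into the left-hand side.

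For general $\Ga^\al$ with $|\al| \le N$ I would induct on $|\al|$. The time derivative $\pt$ commutes with $\pt^2+P$, so $\pt^k u$ solves the same equation with data expressible through $u_0,u_1$ and the base case applies directly. For spatial derivatives and rotations, $[P,Z^\al]u = \sum_{|\be|\le |\al|+1} r_{j(\be)} \pa_x^\be u$ with coefficients obeying \eqref{H3}; treating this commutator as forcing in the equation for $Z^\al u$ and applying the base case gives \eqref{KSS2} up to error terms which, again thanks to $\rho>1$, have integrable weights stronger than $\<x\>^{-\mu}$ and are absorbed via the inductive hypothesis together with Hardy's inequality. The commutation of $\Om_{ij}$ with $-\Delta$ is free, so only the perturbation part $r_0\pa^2 + r_1\pa + r_2$ produces genuine commutator terms, and the structure of \eqref{H3} preserves the form of the error at each step.

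The main obstacle is the absorption of the bilinear term in \eqref{70-est-KSS-SmallPert} and of the commutator errors, since for $0<\mu<1/2$ one does not have the logarithmic slack available at the endpoint $\mu=1/2$. What makes the argument go through is precisely the interior weight $|x|^{-1/2+a} = |x|^{-\mu}$ on the left of Lemma~\ref{KSSlocal} (rather than $\<x\>^{-\mu}$), which is equivalent to $\<x\>^{-\mu}$ in the region $|x|\gtrsim 1$ where the perturbation lives, while the contributions from $|x|\lesssim 1$ are handled by Proposition~\ref{lem5}. Once this absorption is carried out carefully at the base level, the inductive step proceeds by standard commutator bookkeeping.
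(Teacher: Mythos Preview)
Your proposal is correct and matches the paper's approach exactly: the paper does not give a detailed proof of this proposition but simply states that one adapts the arguments of \cite{SW} using the variable-coefficient KSS estimate (Lemma~\ref{KSSlocal}) together with local energy decay (Proposition~\ref{lem5}), which is precisely what you outline. Your proposal in fact supplies more detail than the paper does on the cutoff construction, the absorption of the bilinear term via $\rho>1>2\mu$, and the inductive commutator bookkeeping.
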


As a consequence of this KSS estimate, similarly to the previous
proof of Proposition \ref{lem2}, we can have the following
estimates.
\begin{coro}\label{coro} Assume that \eqref{H1} and \eqref{H2} hold with $\rho > 2$.
Let $ 0<\mu\le 1/2$ and
\begin{equation*}
A_{\mu} (T) = \left\{ \begin{aligned}
&(\log (2+T))^{-1/2} &&\mu = 1/2 ,   \\
&(1+T)^{\mu- 1/2} &&0<\mu < 1/2 .
\end{aligned} \right.
\end{equation*} We have \beeq
\label{KSS3}\|\<x\>^{-\mu}e^{itP^{1/2}}f\|_{L^2_T L^2_x}\les
 A_\mu(T)^{-1}\|f\|_{L^2}.\eneq
 Moreover, if $0<\mu<1/2$, for the solution $u$ of the
equation \eqref{LW} with $F=0$, we have \beeq
\label{KSS4}\sum_{|\al|\le 2}\|\<x\>^{-\mu} Z^\al u\|_{L^2_T
L^2_x}\les T^{1/2-\mu+\ep} \sum_{|\al|\leq 2} \left(\Vert Z^\al
u_0\Vert_{L^2}+
 \Vert Z^\al u_1\Vert_{\dot H^{-1}}\right).\eneq
And, if we assume $\rho>1$ instead of $\rho>2$, we have
 the same estimates of first order $(|\al|\le 1)$.
\end{coro}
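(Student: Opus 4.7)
My plan is to mirror the proof of Proposition \ref{lem2}: deduce \eqref{KSS3} directly from Proposition \ref{Prop3.2}, and then bootstrap to \eqref{KSS4} via commutator analysis, case by case on $|\al|=0,1,2$. For \eqref{KSS3}, I will split $e^{itP^{1/2}} = \cos(tP^{1/2}) + i\sin(tP^{1/2})$ and apply Proposition \ref{Prop3.2} (with $N=0$, $F=0$) to two auxiliary homogeneous solutions. Taking $v(t)=P^{-1/2}\sin(tP^{1/2})f$, the data is $(0,f)$ so $\|v'(0)\|_{L^2}=\|f\|_{L^2}$; the KSS estimate then yields $\|\<x\>^{-\mu}\cos(tP^{1/2})f\|_{L^2_TL^2}=\|\<x\>^{-\mu}\pt v\|_{L^2_TL^2}\les A_\mu(T)^{-1}\|f\|_{L^2}$. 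Taking instead $w(t)=\cos(tP^{1/2})P^{-1/2}f$, with data $(P^{-1/2}f,0)$ and $\|w'(0)\|_{L^2}\simeq \|f\|_{L^2}$ by Lemma \ref{lem0}, and noting $\pt w=-\sin(tP^{1/2})f$, gives the sine companion. Combining the two proves \eqref{KSS3}.

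Turning to \eqref{KSS4} at first order (under the weaker hypothesis $\rho>1$), the case $|\al|=0$ is immediate from \eqref{KSS3} applied to the two Duhamel pieces of $u$, together with $\|P^{-1/2}u_1\|_{L^2}\simeq \|u_1\|_{\dot H^{-1}}$. For $Z^\al=\pa_x$ I will use the observation that $P^{1/2}u$ is itself a homogeneous solution, with data $(P^{1/2}u_0,P^{1/2}u_1)$. Applying the inequality \eqref{a} from Lemma~4.1 of \cite{BoHa} with a small shift $\tilde\mu=\mu-\ep/2$, and then \eqref{KSS3} to $P^{1/2}u$, one obtains
\[
\|\<x\>^{-\mu}\pa u\|_{L^2_TL^2}\les \|\<x\>^{-\mu}\tilde\pa u\|_{L^2_TL^2}\les \|\<x\>^{-\tilde\mu}P^{1/2}u\|_{L^2_TL^2}\les T^{1/2-\mu+\ep}\bigl(\|\pa u_0\|_{L^2}+\|\pa u_1\|_{\dot H^{-1}}\bigr),
\]
using Lemma \ref{lem0} and $\|u_1\|_{L^2}\simeq \|\pa u_1\|_{\dot H^{-1}}$ in the last step; this small shift is also the origin of the $T^\ep$ loss relative to the sharp $A_\mu(T)^{-1}$ in \eqref{KSS3}. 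For $Z^\al=\Om$, $\Om u$ solves $(\pt^2+P)\Om u=[P,\Om]u$ with $[P,\Om]u=\sum_{|\al|\le 2}r_{2-|\al|}\pa_x^\al u$; I plan to run the $\Om$-case of Proposition \ref{lem2} verbatim here, using a dual-weighted variant of \eqref{KSS3} (produced in the spirit of Lemma \ref{lem1} via duality in the inhomogeneous equation) to bound $\Om u$ by the data plus a weighted $L^2_TL^2$ norm of $[P,\Om]u$, and absorbing the latter through the already-established $|\al|\le 1$ estimates together with Hardy's inequality on the top-order term $r_0\pa_x^2 u$, exactly as in \eqref{lem2est3}. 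The assumption $\rho>1$ is just what the $r_{2-|\al|}$ weights require.

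For the second-order part (under $\rho>2$), I plan to follow the same pattern once more. For $Z^\al=\pa_x^2$, Lemma \ref{lem7} reduces the weighted $L^2_TL^2$ estimate to weighted norms of $\tilde\pa P^{1/2}u$ and $Pu$, each of which I bound by applying the first-order and $|\al|=0$ estimates to the homogeneous solutions $P^{1/2}u$ and $Pu$ respectively (with Lemma \ref{lem0} converting data norms back to the $Z^\al$-form on the right-hand side). For $Z^\al=\Om^2$ (and $\Om\pa_x$ analogously), the commutator $[P,\Om^2]u=\sum_{|\al|\le 3}r_{2-|\al|}\pa_x^\al u$ contains a dangerous third-order term $r_{-1}\pa_x^3 u$, and this will be the main obstacle. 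To close the estimate I will use, exactly as in Proposition \ref{lem2}, the fact that $\<x\>^{1/2+\ep}r_{-1}\in L^\infty\cap \dot W^{1,n}$ --- which is precisely where $\rho>2$ is needed --- together with Lemma \ref{lem0.2} and the $N=2$ version of Proposition \ref{Prop3.2} to trade the third derivative of $u$ for second-order data norms, completing the bootstrap.
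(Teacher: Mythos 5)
Your treatment of \eqref{KSS3} and of $Z^\al=\pa_x,\pa_x^2$ is sound, but takes a genuinely different route from the paper's. The paper does not use the strict weight shift $\tilde\mu<\mu$ in Lemma~4.1 of \cite{BoHa}; instead it proves and uses the absorption inequality
$\|\<x\>^{-\mu}\pa_x u\|_{L^2_x}\le \ep\|\<x\>^{-\mu}\pa_x^2 u\|_{L^2_x}+C(\ep)\|\<x\>^{-\mu}u\|_{L^2_x}$
(the claim \eqref{cor3.3fact}, established by a rescaled cutoff argument) so that the first-order term can be absorbed \emph{at the same weight} $\mu$ with no time loss. Your $\tilde\mu=\mu-\ep/2$ shift works just as well for the stated conclusion because the corollary tolerates a $T^\ep$ factor, and it actually saves you the work of proving \eqref{cor3.3fact}; it is an honest alternative. (Minor slip: for $k=2$, Lemma~\ref{lem7} produces $\tilde\pa u$ and $Pu$, not $\tilde\pa P^{1/2}u$, but your subsequent handling shows you had the right quantities in mind.)

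There is, however, a real gap in the $\Om$ and $\Om^2$ part. You say you will ``run the $\Om$-case of Proposition~\ref{lem2} verbatim'' using ``a dual-weighted variant of \eqref{KSS3} produced in the spirit of Lemma~\ref{lem1} via duality,'' but neither of the natural duality routes gives what you need. The direct duality argument of Lemma~\ref{lem1} produces the weight $\<x\>^{-1/2-\ep}$ on the left, not $\<x\>^{-\mu}$ with $\mu<1/2$; and a $TT^*$ of \eqref{KSS3} yields $\|\<x\>^{-\mu}w\|_{L^2_TL^2_x}\les (1+T)^{1-2\mu}\|\<x\>^{\mu}F\|_{L^2_TL^2_x}$, whose time power $(1+T)^{1-2\mu}$ exceeds the required $T^{1/2-\mu+\ep}$ and whose right side lacks the $\dot H^{-1}$ structure you need to tame $r_0\pa_x^2 u$ via Hardy. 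The paper's key step here is the estimate
\begin{equation*}
\|\<x\>^{-\mu}w\|_{L^2_{t,x}([0,T]\times\R^n)}\les T^{1/2-\mu+\ep}\,\|\<x\>^{1/2+\ep}F\|_{L^2_t\dot H^{-1}([0,T]\times\R^n)}
\end{equation*}
for $w$ solving \eqref{LW} with vanishing data, obtained by \emph{interpolating} the sharp Morawetz bound (from Lemma~\ref{lem1}) with the unweighted local-in-time bound $\|w\|_{L^2_TL^2_x}\les T^{1/2}\|\<x\>^{1/2+\ep}F\|_{L^2_t\dot H^{-1}}$ (from Lemma~\ref{lem0.1}). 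Without identifying this interpolation you cannot obtain the correct $T$-power in \eqref{KSS4}, so this part of your plan does not close as written.
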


\begin{proof}
\eqref{KSS3} is a direct consequence if we employ \eqref{KSS2} with $\al=0$ for $u'=\pa_t u$. To obtain \eqref{KSS4}, we basically follow the argument as in Proposition \ref{lem2} with some modifications.
For the second order part, we first consider the case $Z^\al=\pa_x^2$.
We claim that we have the following inequality
\beeq
\label{cor3.3fact}\|\<x\>^{-\mu}\pa_x u\|_{L^2_x}\le \ep \|\<x\>^{-\mu}\pa_x^2 u\|_{L^2_x}+C(\ep) \|\<x\>^{-\mu} u\|_{L^2_x}\ .
\eneq
  By Lemma \ref{lem7}, Lemma \ref{lem0} and Lemma \ref{lem6}, we have
\begin{eqnarray*}
A_\mu(T)\|\<x\>^{-\mu}\pa_x^2 u\|_{L^2_T  L^2_x}
&\les&
A_\mu(T)\sum_{|\al|\leq 2}\|\<x\>^{-\mu} \tilde\pa_x^\al u\|_{L^2_T L^2_x}\\
&\les& A_\mu(T)\sum_{|\al|\leq 1}\|\<x\>^{-\mu} \tilde\pa_x^\al u\|_{L^2_T L^2_x}+A_\mu(T)\|\<x\>^{-\mu} P u\|_{L^2_T L^2_x}\\
&\les& A_\mu(T)\sum_{|\al|\leq 1}\|\<x\>^{-\mu} \pa_x^\al u\|_{L^2_T L^2_x}+A_\mu(T)\|\<x\>^{-\mu} P u\|_{L^2_T L^2_x}\\
&\les& \ep A_\mu(T)\|\<x\>^{-\mu}\pa_x^2 u\|_{L^2_T  L^2_x}
+C(\ep)A_\mu(T) \|\<x\>^{-\mu}  u\|_{L^2_T L^2_x}\\
&&+A_\mu(T)\|\<x\>^{-\mu} P u\|_{L^2_T L^2_x}\\
&\les& \ep A_\mu(T)\|\<x\>^{-\mu}\pa_x^2 u\|_{L^2_T  L^2_x}
\\
&&+C(\ep)\left(\Vert
u_0\Vert_{L^{2}}+\Vert u_1\Vert_{\dot H^{-1}}\right)+\|Pu_0\|_{L^2}+\|Pu_1\|_{\dot H^{-1}},\end{eqnarray*}
where we have used \eqref{KSS3} and \eqref{cor3.3fact}.
Hence we have
\begin{eqnarray*}
A_\mu(T)\|\<x\>^{-\mu}\pa_x^2 u\|_{L^2_T  L^2_x}
&\les&\Vert
u_0\Vert_{L^2}+\Vert u_1\Vert_{\dot
H^{-1}}+\|Pu_0\|_{L^2}+\|Pu_1\|_{\dot H^{-1}}\\
&\les& \Vert
u_0\Vert_{L^2}+\Vert u_1\Vert_{\dot
H^{-1}}+\|Pu_0\|_{L^2}+\|P^{{1}/2}u_1\|_{L^2}\\
&\les& \Vert  u_1\Vert_{\dot
H^{-1}}+\sum_{|\al|\leq 2}\|\pa_x^\al u_0\|_{L^2}
+\|\tilde\pa u_1\|_{L^2}\\
&\les& \sum_{|\al|\leq 2}\left(\|\pa_x^\al u_0\|_{L^2}+\|\pa_x^\al u_1\|_{\dot H^{-1}}\right)\ .
\end{eqnarray*}

Now we are left with the norm for $Z=\Om,\Om^2$, but from the proof of Proposition \ref{lem2}, we know it suffices to prove the following estimates
\beeq
\label{localcommutator}
\|\<x\>^{-\mu}w\|_{L^2_{t,x}([0,T]\times \R^n)}\les T^{1/2-\mu+\ep}\|\<x\>^{1/2+\ep}F\|_{L^2_t\dot H^{-1}([0,T]\times\R^n)},
\eneq
if $w$ is the solution of \eqref{LW} with vanishing initial data.
Recall that we have proved in Lemma \ref{lem1} that
\beeq
\label{1}
\|\<x\>^{- 1/2-\ep}w\|_{L^2_{t,x}([0,T]\times \R^n)}\les \|\<x\>^{1/2+\ep}F\|_{L^2_{t}\dot H^{-1}([0,T]\times\R^n)}.
\eneq
Also if we restrict the time $t$ in $[0,T]$, it is easy to verify that Lemma \ref{lem0.1} still holds, i.e.
\beeq
\label{2}
\|w\|_{L^{2}_tL^2_{x}([0,T]\times \R^n)}\les T^{1/2}\|w\|_{L^{\infty}_tL^2_{x}([0,T]\times \R^n)}\les T^{1/2}\|\<x\>^{1/2+\ep}F\|_{L^2_{t}\dot H^{-1}([0,T]\times\R^n)}.
\eneq
Now \eqref{localcommutator} just follows from the interpolation between \eqref{1} and \eqref{2}. To conclude the proof of \eqref{KSS4}, it remains to prove the claim \eqref{cor3.3fact}.

{\noindent\bf Proof of \eqref{cor3.3fact}.} This inequality is true for $\mu=0$. For general $\mu\ge 0$, we apply the estimate for $\mu=0$ to $v=\phi u$ with $\phi=\psi(x/R), \psi\in C^\infty, 0\leq\psi\leq 1, \text{supp} \psi\subset\{1/4<|x|<2\}$, $\psi=1$ in $B_1\backslash B_{1/2}$ and $R\ge 1$. Because of $\{x:\phi(x)=1\}\subset\{|x|>R/4\}$ and supp$\phi\subset\{R/4<|x|<2R\}$, we get
\begin{eqnarray*}
\|\<x\>^{-\mu}\pa_x  u\|_{L^2({\{x:\phi(x)=1\}})}&=& \|\<x\>^{-\mu}\pa_x (\phi u)\|_{L^2(\{x:\phi(x)=1\})}\nonumber\\
 &\le& C R^{-\mu}\|\pa_x (\phi u)\|_{L^2(\R^n)}\nonumber\\
 &\le& C R^{-\mu}(\ep \|\pa_x^2 (\phi u)\|_{L^2(\R^n)}+C(\ep) \| \phi u\|_{L^2(\R^n)})\nonumber\\
&\le& C(\ep \|\<x\>^{-\mu}\pa_x^2 (\phi u)\|_{L^2(\R^n)}+C(\ep) \|\<x\>^{-\mu} \phi u\|_{L^2(\R^n)})\nonumber\\
&\le& C\left(\ep \|\<x\>^{-\mu}\pa_x^2 u\|_{L^2(\text{supp} \phi)}+C\ep R^{-1}\|\<x\>^{-\mu}\pa_x u\|_{L^2(\text{supp} \phi')}+\right.\nonumber\\
&&\left.(C(\ep)+C \ep R^{-2}) \|\<x\>^{-\mu}  u\|_{L^2(\text{supp} \phi)}\right)\ .
\end{eqnarray*}
If we choose instead $\psi=1$ in $B_1$ and $0$ for $|x|\ge 2$, then
\begin{eqnarray*}
  \|\<x\>^{-\mu}\pa_x  u\|_{L^2(\{x:|x|\le 1\})} &\le& C\ep \|\<x\>^{-\mu}\pa_x^2 u\|_{L^2(\{x:|x|\le 2\})}+\\
  &&C \ep \|\<x\>^{-\mu}\pa_x u\|_{L^2(\{x:|x|\le 2\})}+(C(\ep)+C\ep ) \|\<x\>^{-\mu}  u\|_{L^2(\{x:|x|\le 2\})}\ .\end{eqnarray*}

Combining the above two inequalities, we see
$$\|\<x\>^{-\mu}\pa_x  u\|_{L^2(\R^n)} \le C \ep \|\<x\>^{-\mu}\pa_x^2 u\|_{L^2(\R^n)}+C\ep \|\<x\>^{-\mu-1}\pa_x u\|_{L^2(\R^n)}+C (C(\ep)+\ep) \|\<x\>^{-\mu}  u\|_{L^2(\R^n)}\ ,$$
which implies \eqref{cor3.3fact}, by choosing small enough $\ep>0$.
\end{proof}

The next estimate is based on the endpoint trace lemma.
\begin{prop}
Let $\dot B_{pq}^s$ denote the homogeneous Besov space. Then we have
\beeq\label{trace} \||x|^{({n-1})/2}e^{itP^{1/2}}f\|_{L^\infty_t
L^\infty_r L^2_{\om}}\les \|f\|_{\dot B_{2,1}^{1/2}}\,. \eneq
\end{prop}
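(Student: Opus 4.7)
The plan is a frequency-space decomposition combined with a sharp dyadic trace inequality. Since $e^{itP^{1/2}}$ is unitary on $L^2$ and commutes with any function of $P$, the whole estimate reduces, after a spectral Littlewood--Paley decomposition adapted to $P^{1/2}$, to a uniform dyadic bound at each frequency scale.

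Choose $\psi\in C_0^\infty((1/2,2))$ with $\sum_{k\in\Z}\psi(\lambda/2^k)=1$ on $(0,\infty)$ and set $P_k=\psi(P^{1/2}/2^k)$. By the spectral theorem, $f=\sum_k P_kf$ in $L^2$, and invoking the norm equivalence $\|P^{s/2}u\|_{L^2}\simeq\|u\|_{\dot H^s}$ from Lemma~\ref{lem0} one obtains the Littlewood--Paley characterization
$$\|f\|_{\dot B^{1/2}_{2,1}}\simeq\sum_k 2^{k/2}\|P_kf\|_{L^2}.$$
Because $P_k$ commutes with $e^{itP^{1/2}}$, the triangle inequality reduces the stated estimate to showing, uniformly in $k\in\Z$ and $t\in\R$,
\begin{equation}\label{sharptracedyadic}
\||x|^{(n-1)/2}\,e^{itP^{1/2}}P_kf\|_{L^\infty_r L^2_\omega}\lesssim 2^{k/2}\|P_kf\|_{L^2}.
\end{equation}
Since $\|e^{itP^{1/2}}P_kf\|_{L^2}=\|P_kf\|_{L^2}$ for every $t$ and $e^{itP^{1/2}}P_kf$ remains spectrally supported in $\{P^{1/2}\in[2^{k-1},2^{k+1}]\}$, inequality \eqref{sharptracedyadic} is a special case of the \emph{sharp frequency-localized trace estimate}
\begin{equation}\label{manifoldsharptrace}
\||x|^{(n-1)/2}h\|_{L^\infty_r L^2_\omega}\lesssim 2^{k/2}\|h\|_{L^2},
\end{equation}
valid whenever $h$ is spectrally supported in $\{P^{1/2}\in [2^{k-1},2^{k+1}]\}$.

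Proving \eqref{manifoldsharptrace} is the heart of the argument. On Euclidean space, with $P^{1/2}$ replaced by $\sqrt{-\Delta}$, it is the classical dyadic trace lemma obtained by rescaling the frequency-one version; a standard spherical-harmonic expansion with Bessel-function asymptotics yields the frequency-one bound $\||x|^{(n-1)/2}g\|_{L^\infty_r L^2_\omega}\lesssim\|g\|_{L^2}$, and rescaling by $2^k$ produces exactly the factor $2^{k/2}$. To port this to the manifold, I would split into $|x|\leq R_0$ and $|x|>R_0$ using a smooth cutoff. In the exterior region, hypothesis~\eqref{H1} gives that $P+\Delta = r_0\partial^2+r_1\partial+r_2$ is lower-order with spatial decay $\CO(\<x\>^{-\rho})$; a functional-calculus/pseudodifferential argument then compares $\psi(P^{1/2}/2^k)$ with $\psi(\sqrt{-\Delta}/2^k)$, the remainder having kernel decay absorbable via the local-energy bound~\eqref{KSS3}. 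In the compact region the weight $|x|^{(n-1)/2}$ is harmless, and Sobolev trace onto small spheres together with elliptic regularity of $P$ yields \eqref{manifoldsharptrace}.

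The delicate point is the endpoint nature of the estimate: the Fang--Wang inequality~\eqref{trace-g} produces $L^{2+\eta}_\omega$ on the left and $\dot H^s$ with $s>1/2$ on the right and breaks down as $s\downarrow 1/2$ and $\eta\downarrow 0$. The $L^2_\omega$ endpoint is unreachable by interpolation of Sobolev inequalities alone; it requires the dyadic analysis just described, together with the $\ell^1$ summation that is precisely the Besov index $q=1$:
$$\||x|^{(n-1)/2}e^{itP^{1/2}}f\|_{L^\infty_tL^\infty_r L^2_\omega}\le \sum_k \||x|^{(n-1)/2}e^{itP^{1/2}}P_kf\|_{L^\infty_tL^\infty_r L^2_\omega}\lesssim \sum_k 2^{k/2}\|P_kf\|_{L^2}\simeq\|f\|_{\dot B^{1/2}_{2,1}}.$$
I expect the pseudodifferential comparison step used to pass from the Euclidean dyadic trace lemma to \eqref{manifoldsharptrace} to be the main technical obstacle, since one must check that the comparison error between $P^{1/2}$ and $\sqrt{-\Delta}$ does not spoil the sharp $2^{k/2}$ growth on the right-hand side.
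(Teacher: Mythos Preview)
Your approach via a spectral Littlewood--Paley decomposition adapted to $P$ is conceptually natural, but it introduces a difficulty that you do not resolve and that is in fact unnecessary: the frequency-localized trace estimate
$$\||x|^{(n-1)/2}h\|_{L^\infty_r L^2_\omega}\les 2^{k/2}\|h\|_{L^2}\quad\text{for }h\text{ spectrally supported in }\{P^{1/2}\in[2^{k-1},2^{k+1}]\}.$$
Spectral localization with respect to $P$ is not the same as Fourier localization, so the Euclidean dyadic trace lemma does not apply directly to such $h$. Your proposed functional-calculus comparison of $\psi(P^{1/2}/2^k)$ with $\psi(\sqrt{-\Delta}/2^k)$ would need a quantitative remainder bound that is uniform in $k\in\Z$ and compatible with the endpoint weight $|x|^{(n-1)/2}$; you correctly flag this as the main obstacle, and it is left open. (A minor additional point: the Besov characterization $\|f\|_{\dot B^{1/2}_{2,1}}\simeq\sum_k 2^{k/2}\|\psi(P^{1/2}/2^k)f\|_{L^2}$ does not follow from Lemma~\ref{lem0} by itself; it requires an interpolation step that you have suppressed.)

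The paper's proof sidesteps all of this. The endpoint trace inequality $r^{(n-1)/2}\|g(r\cdot)\|_{L^2_\omega}\les\|g\|_{\dot B^{1/2}_{2,1}}$ from \cite{FaWa} is a purely \emph{spatial} estimate, so one may apply it at each fixed $t$ to $g=e^{itP^{1/2}}f$, obtaining
$$\||x|^{(n-1)/2}e^{itP^{1/2}}f\|_{L^\infty_rL^2_\omega}\les\|e^{itP^{1/2}}f\|_{\dot B^{1/2}_{2,1}}.$$
It then remains only to show that $e^{itP^{1/2}}$ is bounded on $\dot B^{1/2}_{2,1}$ uniformly in $t$. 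Since $e^{itP^{1/2}}$ is an $L^2$-isometry and, by Lemma~\ref{lem0}, is bounded on $\dot H^1$ (indeed $\|e^{itP^{1/2}}f\|_{\dot H^1}\simeq\|P^{1/2}e^{itP^{1/2}}f\|_{L^2}=\|P^{1/2}f\|_{L^2}\simeq\|f\|_{\dot H^1}$), real interpolation $(\dot H^0,\dot H^1)_{1/2,1}=\dot B^{1/2}_{2,1}$ finishes the proof. No comparison between the $P$-calculus and the Euclidean Fourier calculus at the dyadic level is ever needed.
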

\begin{proof}
Recall that we have the endpoint Trace lemma (see (1.7) in
\cite{FaWa}): \beeq
\label{endpointtrace}r^{({n-1})/2}\|f (r\cdot)\|_{L^2_{\om}}\les
\|f\|_{\dot B_{2,1}^{1/2}}\ ,\eneq which gives that \beeq
\label{trace1}
\||x|^{({n-1})/2}e^{itP^{1/2}}f\|_{L^\infty_rL^2_{\om}}\les
\|e^{itP^{1/2}}f\|_{\dot B_{2,1}^{1/2}}\ .\eneq On the other hand,
by Lemma \ref{lem0} we have
$$\|e^{itP^{1/2}}f\|_{\dot H^{1}}\les \|P^{1/2}e^{itP^{1/2}}f\|_{L^2_x}\les \|P^{1/2}f\|_{L^2_x}\les \|f\|_{\dot H^1}.$$
Noticing that $\|f\|_{\dot B_{2,2}^{s}}=\|f\|_{\dot H^s}$, we can
rewrite the above estimate as
$$\|e^{itP^{1/2}}f\|_{\dot B_{2,2}^{1}}\les \|f\|_{\dot B_{2,2}^1}\ .$$
Interpolating this estimate with the energy estimate
$$\|e^{itP^{1/2}}f\|_{\dot B_{2,2}^{0}}\les \|f\|_{\dot B_{2,2}^0}$$
gives \beeq \label{trace2}\|e^{itP^{1/2}}f\|_{\dot
B_{2,1}^{1/2}}=\|e^{itP^{1/2}}f\|_{(\dot B_{2,2}^{1},\dot
B_{2,2}^0)_{ 1/2,1}}\les \|f\|_{(\dot B_{2,2}^1,\dot
B_{2,2}^0)_{ 1/2,1}}=\|f\|_{\dot B_{2,1}^{1/2}}\ , \eneq where
we have used the fact that (Theorem 6.4.5 in \cite{JJ})
$$(\dot B_{pq_0}^{s_0}, \dot B_{pq_1}^{s_1})_{\theta,r}= \dot B_{pr}^{s^*},\ \text{if}\ s_0\neq s_1,~ 0<\theta<1,~ r,q_0,q_1\geq 1 ~\text{and}~ s^*=(1-\theta)s_0+\theta s_1.$$
Now our estimate \eqref{trace} follows from \eqref{trace1} and
\eqref{trace2}.
\end{proof}

Now we are ready to obtain the local in time Strichartz estimates as
follows.
\begin{prop}
\label{prop3.4} 
Let $2\leq p <\infty$ and $a\in (0, 1/p)$. Then we have
\beeq\label{1storder-local-stri}\|\<x\>^{-a}|x|^{(n-1)(
1/2- 1/p)} e^{itP^{1/2}}f\|_{L^p_T L^p_r L^2_\om}\les (1+T)^{
1/p-a} \|f\|_{\dot H^{1/2-1/p}}.\eneq

\end{prop}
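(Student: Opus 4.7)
The plan is to prove the estimate by interpolating between two endpoint estimates already established in the paper: the $p=2$ weighted KSS-type bound from Corollary \ref{coro} and the $p=\infty$ endpoint trace bound from \eqref{trace}. Fix $p\in[2,\infty)$ and $a\in(0,1/p)$, and set $a_0=ap/2\in(0,1/2)$. Absorbing the factor $|x|^{(n-1)(1/2-1/p)}$ into the switch from $L^p_{|x|}$ with measure $r^{n-1}dr$ to ordinary $L^p(dr)$, the claim is equivalent to
$$\|\<r\>^{-a}\,r^{(n-1)/2}\,e^{itP^{1/2}}f\|_{L^p_T L^p(dr) L^2_\omega}\les(1+T)^{1/p-a}\|f\|_{\dot H^{1/2-1/p}}.$$

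First, Corollary \ref{coro} applied with $\mu=a_0$ gives the $\theta=1$ endpoint
$$\|\<r\>^{-a_0}e^{itP^{1/2}}f\|_{L^2_T L^2_x}\les(1+T)^{1/2-a_0}\|f\|_{L^2},$$
while \eqref{trace} supplies the $\theta=0$ endpoint
$$\|r^{(n-1)/2}e^{itP^{1/2}}f\|_{L^\infty_T L^\infty(dr) L^2_\omega}\les\|f\|_{\dot B^{1/2}_{2,1}}.$$
With the choice $\theta=2/p$ the interpolated parameters match the target exactly: the radial weight becomes $\<r\>^{-a_0\theta}=\<r\>^{-a}$, the mixed Lebesgue index becomes $L^p$ (since $1/p=\theta/2$), and the time constant becomes $(1+T)^{\theta(1/2-a_0)}=(1+T)^{1/p-a}$.

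To justify the interpolation rigorously I would apply Stein's theorem to the analytic family
$$S_z f(t,r,\omega)=\<r\>^{-a_0 z}\,r^{(n-1)/2}\,e^{itP^{1/2}}f(r\omega),\quad 0\le\operatorname{Re} z\le 1,$$
with $L^2_\omega$ playing the role of a fixed Hilbert-valued target. On $\operatorname{Re} z=1$ the boundary bound is exactly Endpoint A (since $|\<r\>^{-a_0(1+iy)}|=\<r\>^{-a_0}$), and on $\operatorname{Re} z=0$ it is Endpoint B. Equivalently, one may use Lions--Peetre real interpolation with third index $p$, producing the mixed norm $L^p_T L^p(dr) L^2_\omega$ on the left and the Besov space $(\dot B^{1/2}_{2,1},L^2)_{2/p,p}=\dot B^{1/2-1/p}_{2,p}$ on the right. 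The continuous embedding $\dot H^{1/2-1/p}=\dot B^{1/2-1/p}_{2,2}\hookrightarrow\dot B^{1/2-1/p}_{2,p}$, valid for $p\ge 2$ because $\ell^2\subset\ell^p$, then upgrades the conclusion to the stated $\dot H^{1/2-1/p}$ hypothesis.

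The main obstacle is the bookkeeping on the spatial-regularity endpoint: the trace lemma is naturally phrased in the Besov scale $\dot B^{1/2}_{2,1}$ rather than $\dot H^{1/2}$, and complex interpolation lands in $\dot B^{1/2-1/p}_{2,p'}$, which sits strictly inside $\dot H^{1/2-1/p}$ for $p\ge 2$ and so does \emph{not} directly yield the claim. The real-interpolation variant with third index $p$, combined with the $\ell^2\subset\ell^p$ Besov embedding, sidesteps this issue. A careful but essentially routine verification of the weighted interpolation on the mixed-norm LHS (where the weights differ between the two endpoints, cleanly handled by the analytic family $S_z$ above) then finishes the proof.
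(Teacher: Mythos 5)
Your proof is correct and follows essentially the same route as the paper, whose proof simply states that the estimate ``follows from the real interpolation between \eqref{KSS3} and \eqref{trace} with $\theta=2/p$.'' Your observation that complex interpolation would only land in the too-small space $\dot B^{1/2-1/p}_{2,p'}$, whereas real interpolation with third index $p$ together with the embedding $\dot H^{1/2-1/p}\hookrightarrow\dot B^{1/2-1/p}_{2,p}$ delivers the stated Sobolev hypothesis, usefully fills in the detail the paper leaves to the reader.
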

\begin{proof}
This estimate follows from the real interpolation between
\eqref{KSS3} and \eqref{trace} with $\theta= 2/p$ (for similar
arguments, see, e.g., \cite{H}, \cite{Yu09}).
\end{proof}

Finally we give the proof of Theorem \ref{mainest2}.

\noindent \textbf{Proof of Theorem \ref{mainest2}}: Since the estimates
in Theorem \ref{mainest2} with order $0$ are just obtained in Proposition
\ref{prop3.4}, we are left with the higher order estimates.
Similarly to the proof of Proposition \ref{prop3.4}, we need only to
show the higher order estimates that correspond to \eqref{KSS3}
and \eqref{trace}.

The higher order estimates corresponding to \eqref{KSS3} are known
from Corollary \ref{coro}. For the higher order estimates  of
\eqref{trace}, by \eqref{endpointtrace} we have
\beeq\label{trace-inter} \sum_{|\al|\leq
2}\||x|^{({n-1})/2}Z^\al u(t,\cdot)\|_{L^\infty_rL^2_{\om}}\les
\sum_{|\al|\leq 2}\|Z^\al u(t,\cdot)\|_{\dot B_{2,1}^{1/2}}~. \eneq
On the other hand, from the energy estimates in Proposition
\ref{lem3}, we have for any $s\in [0,1]$
$$\sum_{|\al|\leq 2}\|Z^\al
u(t,\cdot)\|_{\dot H^s} \les \sum_{|\al|\leq 2} \left(\Vert Z^\al
u_0\Vert_{\dot H^s}+
 \Vert Z^\al u_1\Vert_{\dot H^{s-1}}\right)\ .$$
Now the real interpolation between the above two estimates with
$s=0$ and $s=1$ gives
$$\sum_{|\al|\leq 2}\|Z^\al
u(t,\cdot)\|_{\dot B^{1/2}_{2,1}} \les \sum_{|\al|\leq 2}
\left(\Vert Z^\al u_0\Vert_{\dot B^{1/2}_{2,1}}+
 \Vert Z^\al u_1\Vert_{\dot B^{-1/2}_{2,1}}\right)\ .$$
Combining this estimate with \eqref{trace-inter}, we get the second order
estimates of \eqref{trace}, which completes the proof of Theorem
\ref{mainest2} for $\rho>2$. When $\rho>1$, we need only to use \eqref{trace-g} instead of \eqref{endpointtrace}. \qed

\section{Strauss Conjecture when $n=3,4$}

In this section, we will prove the existence results in Theorem
\ref{Strauss} and Theorem \ref{Strauss2}.

\subsection{Global results when $n=3,4$}

In this subsection, we prove the Strauss conjecture stated in
Theorem \ref{Strauss}. The result when $n=3$ and $\rho>1$ has been
proved in \cite{SW}, under the additional assumption that $g_{ij}$
is spherically symmetric. Since we have obtained the same
estimates without this assumption, the existence result with a
general metric follows from the same argument. Here we present the
proof for $n=3,4$ under the conditions $\rho>2$ and $p>p_c$,
and we are following the argument as in \cite{HMSSZ}.

We define $X=X_{s,\ep,q}(\R^n)$ to be the space with norm defined by
\begin{equation}\label{Xspace}
\|h\|_{X_{s,\ep,q}}=\|h\|_{L^{q_s}(|x|\le 1)}\, + \, \bigl\| \,
|x|^{{n}/{2}-({n+1})/{q}-s-\ep} h\bigr\|_{L^q_{|x|}
L^{2+\eta}_\omega(|x|\ge 1)} ,\end{equation} where  $
n\bigl(\tfrac12-\tfrac1{q_s})=s$.
Combining the Sobolev inequalities with angular regularity \eqref{trace-g} with Sobolev
embedding $\dot H^s\subset L^{q_s}$, we have the embedding
$$\dot H^{s}\subset X_{s,0,\infty}$$ for $s\in(1/2,n/2)$ and some
$\eta>0$. By duality, we have (see Theorem 2.11 of \cite{LZ})
\beeq\label{70-est-Embedding}X_{1-s,0,\infty}'\subset \dot H^{s-1}
\textrm{ for }s\in \left(({2-n})/2, 1/2\right).\eneq

With these notations, Theorem \ref{mainest} tells us that for the
solution $u$ to the linear wave equation $\pt^2 u+Pu=0$, we have
$$\sum_{|\al|\le 2}\left(
\|Z^\al u \|_{L^\infty_t \dot H^s\cap L^p_t X_{s,\ep,p}}+\|\pt Z^\al
u\|_{L^\infty_t \dot H^{s-1}}\right)\les  \sum_{|\al|\le 2}
\left(\Vert Z^\al u_0\Vert_{\dot{H}^{s}}+
 \Vert Z^\al  u_1\Vert_{\dot H^{s-1}}\right)$$ for $s\in (1/2-1/p, 1)$.
By Duhamel's formula and \eqref{70-est-Embedding}, we see that for
$u$ solving the linear wave equation $\pt^2 u+Pu=F$, we have
\begin{eqnarray}
\label{70-est-Strauss-KeyHigh2}&\sum_{|\al|\le 2}\left( \|Z^\al u
\|_{L^\infty_t \dot H^s\cap L^p_t X_{s,\ep,p}}+\|\pt Z^\al
u\|_{L^\infty_t \dot
H^{s-1}}\right) & \\
\les& \sum_{|\al|\le 2} \left(\Vert Z^\al u_0\Vert_{\dot{H}^{s}}+
 \Vert Z^\al  u_1\Vert_{\dot H^{s-1}}+
 \|Z^\al  F\|_{L^1_t \dot H^{s-1}}\right)&\nonumber\\
\les& \sum_{|\al|\le 2} \left(\Vert Z^\al u_0\Vert_{\dot{H}^{s}}+
 \Vert Z^\al  u_1\Vert_{\dot H^{s-1}}+
 \|Z^\al  F\|_{L^1_t X_{1-s, 0, \infty}'}\right)&\nonumber
 \end{eqnarray}  if $\rho>2$, $p>2$, $s\in (1/2-1/p,1/2)$.


For the linear wave equation $(\pt^2-\Delta_g)u=F$, using the
observation \eqref{70-est-EquiEqn}, we have the same set of estimates.

Let us now see how we can use these estimates to prove Theorem
\ref{Strauss}. Considering the Cauchy data $(u_0,u_1)$ satisfying
the smallness condition \eqref{70-eqn-SLW-data}, set $u^{-1}\equiv
0$ and let $u^{(0)}$ solve the Cauchy problem \eqref{eq} with
$F=0$. We iteratively define $u^{(k)}$, for $k\ge 1$, by solving
$$
\begin{cases}
(\partial^2_t-\Delta_{g})u^{(k)}(t,x)=F_p(u^{(k-1)}(t,x))\,,
\quad (t,x)\in \R_+\times \R^n,
\\
u(0,\cdot)=u_0, \quad
\partial_t u(0,\cdot)=u_1.
\end{cases}
$$
Let $s=s_c-{p\ep}/({p-1})= n/2- 2/({p-1})-p\ep/({p-1})$, our aim is to show that if the constant $\de>0$ in
\eqref{70-eqn-SLW-data} is small enough, then so is
$$
M_k = \sum_{|\alpha|\le 2}\left( \|Z^\alpha u^{(k)}
\|_{L^\infty_t\dot H^s \cap L^p_t X_{s,\ep,p}}+ \|\partial_t
Z^\alpha u^{(k)}\|_{L^\infty_t \dot H^{s-1}} \right)
$$for every $k=0,1,2,\dots$~. Notice that since $p_c<p<1+ 4/({n-1})$, we can always choose $\ep>0$ small enough so that $s\in ( 1/2- 1/p,  1/2)$. Note also that we have the identity
\beeq \label{powerinduction} p( n/2-({n+1})/p-s-\ep)=-(
n/2-(1-s))\ . \eneq

For $k=0$, by \eqref{70-est-Strauss-KeyHigh2} we have
$M_0\le C_0\de$, with $C_0$ a fixed constant. More generally, \eqref{70-est-Strauss-KeyHigh2} implies that
\begin{align}\label{3.18}
M_k\le C_0\de +C_0\sum_{|\alpha|\le 2} \, \Bigl( \,& \bigl\| \,
|x|^{-{n}/2+1-s} Z^\alpha F_p(u^{(k-1)}) \bigr\|_{L^1_t
L^1_{|x|} L^{2}_\omega(\R_+\times \{x: |x|\ge 1\})}
\\
&+\|Z^\alpha F_p(u^{(k-1)}) \|_{L^1_t L^{q_{1-s}'}_x(\R_+\times
\{x: |x|\le 1\})}\Bigr)\,. \notag
\end{align}

Recall that our assumption \eqref{Fp} on the nonlinear term
$F_p$ implies that for small $v$ \beeq\label{small}
\sum_{|\alpha|\le 2}|Z^\alpha F_p(v)|\lesssim |v|^{p-1}
\sum_{|\alpha|\le 2}|Z^\alpha v|+|v|^{p-2} \sum_{|\alpha|\le
1}|Z^\alpha v|^2\,. \eneq
Since the collection $Z$ contains vectors spanning the tangent space
to $S^{n-1}$, by Sobolev embedding we have
$$
\|v(r\cdot)\|_{L^\infty_\om}+\sum_{|\al|\leq 1}\|Z^\al v(r\cdot)\|_{L^4_\omega} \lesssim \sum_{|\alpha|\le 2} \|Z^\alpha
v(r\cdot)\|_{L^{2}_\omega}\,.
$$
Consequently, for fixed $t, r>0$
$$
\sum_{|\alpha|\le 2}\|Z^\alpha F_p(u^{(k-1)}(t,r\cdot) )
\|_{L^{2}_\omega}\lesssim \sum_{|\alpha|\le 2} \|Z^\alpha
u^{(k-1)}(t,r\cdot) \|^p_{L^{2}_\omega}\,.
$$
By \eqref{powerinduction}, the first summand in the right side of
\eqref{3.18} is dominated by $C_1 M_{k-1}^p$ for small
$u^{(k-1)}$.

Since $q'_{1-s}<2<q_s$,
$p>2$ and $n\leq 4$, we can choose $\eta>0$ small enough such that
$p, q_s>2+\eta$ and so $W^{2, 2+\eta}\subset
L^\infty$, $H^{1}\subset L^4$. Thus, for each fixed $t$, we have
\begin{eqnarray*}
&&\sum_{|\alpha|\le 2}\|Z^\alpha F_p(u^{(k-1)}(t,\cdot))
\|_{L^{q'_{1-s}}(x:|x|\le 1)}\\ &\lesssim& \sum_{|\alpha|\le 2}
\|u^{(k-1)}\|^{p-1}_{L^\infty(x:|x|\le 1)} \|Z^\alpha
u^{(k-1)}(t,\cdot)
\|_{L^{q'_{1-s}}(x:|x|\le 1)}\\
&&+\sum_{|\alpha|\le 1} \|u^{(k-1)}\|^{p-2}_{L^\infty(x:|x|\le 1)}
\|Z^\alpha u^{(k-1)}(t,\cdot)
\|^2_{L^{2q'_{1-s}}(x:|x|\le 1)}\\
&\lesssim& \sum_{|\alpha|\le 2} \|u^{(k-1)}\|^{p-1}_{W^{2,
2+\eta}(x:|x|\le 2)} \|Z^\alpha u^{(k-1)}(t,\cdot)
\|_{L^{q_{s}}(x:|x|\le 1)}\\
&&+\sum_{|\alpha|\le 2} \|u^{(k-1)}\|^{p-2}_{W^{2, 2+\eta}(x:|x|\le
2 )} \|Z^\alpha u^{(k-1)}(t,\cdot)
\|^2_{L^{2}(x:|x|\le 2)}\\
&\les& \sum_{|\alpha|\le 2} \|Z^\alpha u^{(k-1)}(t,\cdot)
\|^p_{L^{q_{s}}(x:|x|\le 1)}+\\
&&\sum_{|\be|\le 2} \||x|^{{n}/{2}-({n+1})/{p}-s-\ep} Z^\be
u^{(k-1)}(t,\cdot)\|^{p}_{L^p_{|x|} L^{2+\eta}_\omega(|x|\ge 1)}
\end{eqnarray*}
The second summand in the right side of \eqref{3.18} is thus also
dominated by $C_1 M_{k-1}^p$, and we conclude that $M_k\le C_0\de+2
C_0\,C_1 M_{k-1}^p$. Then
\begin{equation}\label{3.19} M_k\le 2\,C_0\de, \quad k=1,2,3,\dots~.
\end{equation}
for $\de>0$ sufficiently small.
Moreover, the smallness condition of \eqref{small} is verified for
sufficiently small $\delta>0$, since $$\|u^{(k)}\|_{L^\infty_{t,x}}\les
M_k\ .$$

To finish the proof of Theorem \ref{Strauss} we need only to show
that $u^{(k)}$ converges to a solution of the equation
\eqref{eq}. For this it suffices to show that
$$A_k=
\|u^{(k)}-u^{(k-1)}\|_{L^p_t X_{s,\ep,p}}
$$
tends geometrically to zero as $k\to \infty$.  Since
$|F_p(v)-F_p(w)|\lesssim |v-w|(\, |v|^{p-1}+|w|^{p-1}\, )$, the
proof of \eqref{3.19} can be adapted to show that, for small
$\de>0$, there is a uniform constant $C$ so that
$$A_k\le CA_{k-1}(M_{k-1}+M_{k-2})^{p-1},$$
which, by \eqref{3.19}, implies that $A_k\le \tfrac12A_{k-1}$ for
small $\de$.  Since $A_1$ is finite, the claim follows, which
finishes the proof of Theorem \ref{Strauss}.

\subsection{Local Results when $n=3$}
In this subsection we prove Theorem \ref{Strauss2}. Let $2\le
p<p_c=1+\sqrt{2}$ and $n=3$.

Define $s=s_d=1/2-1/p$, and $a$ be the number such that
$$p\left[(n-1)( 1/2- 1/p)-a\right]=1-s- n/2\ ,$$
i.e., $a=-1/{p^2}-({n-1})/({2p})+({n-1})/2$. Since $2\leq
p<p_c$, we have $a\in (0, 1/p)$. By the estimates
\eqref{highorderenergyest}, \eqref{highorderest2} and Duhamel's
principle, we have for $T\ge 1$
\begin{eqnarray}
\sum_{|\al|\le 2} \left (\||x|^{(n-1)( 1/2- 1/p)-a}Z^\al u
\|_{L^p_t L^p_{r}L^2_{\omega}([0,T]\times \{|x|>1\})}+
\|Z^\al u\|_{L^p_t L^{q_s}_x([0,T]\times \{|x|<1\})}\right )&&\nonumber\\
\les T^{ 1/{p}-a+\ep}\sum_{|\al|\le 2} \left (\Vert Z^\al
u_0\Vert_{{\dot H}^{s}}+ \| Z^\al u_1\|_{ \dot H^{s-1}} +\|Z^\al
F\|_{L^1_t \dot
H^{s-1}}\right )&&\nonumber\\
\les T^{ 1/{p}-a+\ep}\sum_{|\al|\le 2} \left (\Vert Z^\al
u_0\Vert_{{\dot H}^{s}}+ \| Z^\al u_1\|_{ \dot H^{s-1}} +\|Z^\al
F\|_{L^1_t X_{1-s,0,\infty}'}\right ) .&&\label{highorderest3}
\end{eqnarray}
Now if we set \begin{multline} M_k = \sum_{|\alpha|\le 2}\left(
\|Z^\alpha u^{(k)} \|_{L^\infty_t\dot H^s }+ \|\partial_t Z^\alpha
u^{(k)}\|_{L^\infty_t \dot H^{s-1}} \right)\\+T^{a- 1/{p}-\ep}
\sum_{|\al|\le 2}\left (\||x|^{(- 1/2-s)/p}Z^\al u \|_{L^p_t
L^p_{r}L^2_{\omega}([0,T]\times \{|x|>1\})}+\|Z^\al u\|_{L^p_t
L^{q_s}_x([0,T]\times \{|x|<1\})}\right),
\end{multline}
 then on the basis of \eqref{highorderenergyest} and
 \eqref{highorderest3}, we
 can use the iteration method (with $\eta=0$) as in Section 4.1 to get the existence
 result for $2\le p<p_c$ and $\rho>2$ in Theorem \ref{Strauss2}.

Heuristically, the lifespan is given when we have
$$M_k\sim \left(T_\de^{1/p-a+\ep} M_k\right)^p\sim \delta \ ,$$
which yields that
$$T_\de\sim \de^{({p(p-1)})/({p^2-2p-1})+\ep'}, ~\forall \ep'>0\ .$$

The case $\rho>1$ can be proved by the same argument in \cite{SW} combined with Theorem \ref{mainest2}.

\vskip 3mm
\noindent{\bf Acknowledgement.} The authors would like to thank the anonymous referee for a careful and thorough reading of the manuscript and a number of helpful comments. In particular, the authors are grateful for the suggestion that leads to an improvement of Theorem \ref{Strauss2}.

\end{document}